\documentclass{article}

\usepackage{mathrsfs}
\usepackage[linesnumbered,ruled,vlined]{algorithm2e} 
\usepackage{a4wide}
\usepackage{amsmath,amssymb,amsthm}
\usepackage{graphicx}
\usepackage{url}

\newtheorem{Theorem}{Theorem}
\newtheorem{Definition}{Definition}

\begin{document}

\title{Dynamic Analysis and Optimal Prevention Strategies 
for Monkeypox Spread Modeled via the Mittag--Leffler Kernel\thanks{This is 
a preprint of a paper published in 'Fractal Fract.' 
at [https://doi.org/10.3390/fractalfract10010044].}}

\author{Mine Yurto\u{g}lu$^{1}$\\
{\tt mine.yurtoglu91@gmail.com}
\and Dilara Yap\i \c{s}kan$^{1,2}$\\
{\tt dilarayapiskan@ua.pt}
\and Ebenezer Bonyah$^{3}$\\
{\tt ebonyah@aamusted.edu.gh}
\and Beyza Billur \.{I}skender Ero\u{g}lu$^{1}$\\
{\tt biskender@balikesir.edu.tr}
\and Derya Avc\i \,$^{1}$\\
{\tt dkaradeniz@balikesir.edu.tr}
\and Delfim F. M. Torres $^{2,}$\thanks{Correspondence: delfim@ua.pt}\\
{\tt delfim@ua.pt}}

\date{$^{1}$Department of Mathematics, Bal\i kesir University, 10145 Bal\i kesir, Turkey\\[0.3cm]
$^{2}$\text{Center for Research and Development in Mathematics and Applications (CIDMA),}
Department of Mathematics, University of Aveiro, 3810-193 Aveiro, Portugal\\[0.3cm]
$^{3}$Department of Mathematics Education, 
Akenten Appiah Menka University of Skills Training and Entrepreneurial Development, 
Kumasi P.O. Box 1277, Ghana}

\maketitle


\begin{abstract}
Monkeypox is a viral disease belonging to the smallpox family. Although it has 
milder symptoms than smallpox in humans, it has become a global threat in recent 
years, especially in African countries. Initially, incidental immunity against 
monkeypox was provided by smallpox vaccines. However, the eradication of smallpox 
over time and thus the lack of vaccination has led to the widespread and clinical 
importance of monkeypox. Although mathematical epidemiology research on the disease 
is complementary to clinical studies, it has attracted attention in the last few years. 
The present study aims to discuss the indispensable effects of three control strategies 
such as vaccination, treatment, and quarantine to prevent the monkeypox epidemic modeled 
via the Atangana--Baleanu operator. The main purpose is to determine optimal control 
measures planned to reduce the rates of exposed and infected individuals at the minimum 
costs. For the controlled model, the existence-uniqueness of the solutions, stability, 
and sensitivity analysis, and numerical optimal solutions are exhibited. The optimal 
system is numerically solved using the Adams-type predictor--corrector method. 
In the numerical simulations, the efficacy of the vaccination, treatment, and quarantine 
controls is evaluated in separate analyzes as single-, double-, and triple-control 
strategies. The results demonstrate that the most effective strategy for achieving the 
aimed outcome is the simultaneous application of vaccination, treatment, and quarantine 
controls.

\medskip

\noindent {\bf Keywords:} monkeypox Model;
vaccination and treatment; 
quarantine;
optimal control;
Mittag--Leffler

\medskip

\noindent {\bf MSC:} 26A33; 49K15; 92D30
\end{abstract}


\section{Introduction}

The Poxviridae family consists of double-stranded DNA viruses that have long
plagued animals and humans. Monkeypox virus belonging to this family is a
viral zoonotic disease and was first identified in the mid-19th century. It
has since been a serious health problem in West and Central Africa. Since
the monkeypox virus belongs to the smallpox family,
vaccines used for smallpox have provided protection against monkeypox for
many years~\cite{WHO1}. However, with~the eradication of smallpox, the~
vaccine has not become a need, and~the cases have increased dramatically,
making it a threat today. As~a result, in~July 2022, the~World Health
Organization (WHO) declared the public health emergency as the global
monkeypox outbreak, announcing that the monkeypox virus continued to spread
from 75 countries and regions, causing international concern~\cite{WHO2,Hatmal}. 
This critical change in monkeypox has drawn attention not
only in medicine but also in mathematical epidemiology. In~this context,
mathematical models representing different variations of the disease have
been introduced, because~mathematical models provide an opportunity to make
a realistic prediction about the future state of a disease. In~addition,
thanks to mathematical models, the~optimal prevention strategies that save
both time and cost can be determined to neutralize the~disease.

Mathematical modeling of a disease dates back to Bernoulli's work on the
spread of the Smallpox virus in the 18th century~\cite{Bernoulli}. In~the
first quarter of the next century, Kermack and McKendrick~\cite{Kermack}
modeled the spread dynamics of an infectious diseases with the help of a
differential equation system. Nowadays, mathematical modeling has an
increasing interest among the researchers to foresee the precautions for
reducing the spread of diseases such as Dengue, HIV, 
and~COVID-19~\cite{torres1,Bolaji,Baskonus}. 
Although~monkeypox has also posed a critical
threat around the world in recent years, little attention has been paid to
the introduction of its mathematical models and preventive control
strategies~\cite{Bhunu1,Bhunu2,Usman,Emeka,Peter1,Monkeypox2023}.

The classical derivative is, unfortunately, inadequate due to its local
definition in modeling the heterogeneous propagation behavior occurring in a
physical process or a biological phenomenon. The~most important reason for
this is that many external factors disrupt the homogeneity of behavior. This
shortcoming is easily remedied with fractional derivatives, thanks to their
non-local definitions. As~known, fractional derivatives are classified as
singular and non-singular, according to their kernel structure. This makes
one~advantageous over the other depending on the discussed 
phenomenon~\cite{Baltaeva,Muhammad,Tajani,Sabatier}. {Especially, a~review of the 
literature reveals that the Atangana--Baleanu fractional derivative, thanks 
to its Mittag--Leffler kernel, ensures better insight into the evolution of the 
disease, particularly at the beginning and end of the spread of 
viral diseases~\cite{Ullah,Li}.}

In epidemiological modeling, although~it is usually assumed that the
infection spreads homogeneously in the population, the~transmission depends
on the diversity of individual characteristics (age, gender, physical
characteristics, genetics, etc.) and environmental factors (habitat,
population density, financial opportunities, technological developments,
etc.). Therefore, in~reality, many infectious diseases show a heterogeneous
spread~\cite{Becker}. In~this sense, fractional operators are used to obtain
the closest model to reality~\cite{FatmaDerya,Yurtoglu2024,BeyzaDilara,Yapiskan2024}.
Smallpox displays an exponential distribution for the reasons mentioned~\cite{Kaplan}. 
Since monkeypox belongs to the same family and spreads among
different populations, it shows an inhomogeneous distribution. Thus,
monkeypox has been discussed very recently with fractional-order 
models~\cite{Peter2,El-Mesady,Okyere,Okposo2023,Zhang2023}.

Optimal control theory is a powerful mathematical tool that complements the
system theory, because~the determination of the parameters that optimally
control the behavior of the systems is quite important, as is examining
the dynamics of the systems. In~fractional optimal control theory, the~
system and/or objective function is defined in terms of fractional
operators. Determination of the optimality conditions depends on the type of 
fractional operator~\cite{Agrawal1,Agrawal2,Bahaa19}. 
To our knowledge, optimal control studies of integer or fractional order 
monkeypox models are just limited in the literature~\cite{Majee23,Petercontrol}. 
Motivated by the need for optimal control strategies to prevent the spread of 
monkeypox, we develop the model of Peter~et~al.~\cite{Peter1} in the
present study. The~controlled model is discussed in the sense of
Atangana--Baleanu derivative. Vaccination, treatment, and~quarantine are 
validated as control strategies. These controls are also addressed for the 
elimination of different diseases~\cite{Torres2,Eroglu2023,Yurtoglu2023,Ammi}.

In the rest of the study, basic definitions and properties are
recalled in\linebreak   Section~\ref{sec:2}. The~controlled model is proposed in Section~\ref{sec:3}. 
The non-negativity, boundedness, existence and uniqueness of the solutions are
proved in~Section~\ref{sec:4}. 
In Section~\ref{sec:5}, the~reproduction number is computed for the controlled system. 
Afterward, the~local stability of the model is analyzed. The~developed optimal control 
problem aims to minimize both the number of exposed and infected individuals, and~the 
costs of all control strategies. The~existence of controls is first guaranteed and then 
the optimal control problem is constructed in Section~\ref{sec:6}. Thereafter, by~
Pontryagin's Maximum Principle, the~fractional necessary optimal system is achieved. 
Finally, this system is solved with the Adams-type predictor--corrector algorithm combined with the
forward--backward sweep method. In~Section~\ref{sec:7}, the~behaviors of the system under controls
are implemented by giving various graphics, which are plotted with the help of MATLAB 2021b 
software. We end with Section~\ref{sec:8}, which presents the~conclusions.


\section{Fundamental~Definitions}
\label{sec:2}

This section provides necessary fundamental definitions of fractional
operators\linebreak   as follows:

\begin{Definition}
The one and two parameter Mittag-Leffler functions are respectively
defined as~\cite{Kilbas93}: 
\vspace{-3pt}
\begin{eqnarray}
E_{\omega }(z) &=&\sum_{i=0}^{\infty }\frac{z^{i}}{\Gamma (\omega i+1)}, \Re
(\omega ) \in \mathbb{R} _{+},  \label{eq7} \\
E_{\omega,\xi }(z) &=&\sum_{i=0}^{\infty }\frac{z^{i}}{\Gamma (\omega i+\xi
)}, \Re (\omega ) \in \mathbb{R} _{+}, \Re (\xi ) \in \mathbb{R} _{+},
\label{eq8}
\end{eqnarray}
in where $\Gamma(\cdot)$ is the Gamma function.
\end{Definition}

\begin{Definition}
Let $f\left( t\right) \in C\left( \left[ a,b\right] \right) $, $t\in \left(
a,b\right) $, and~$\alpha \in \mathbb{R}$. 
Then the Riemann-Liouville (RL) fractional integral is defined as follows
\cite{Kilbas93}:
\begin{equation}
^{RL}I_{a+}^{\alpha }f\left( t\right) =\frac{1}{\Gamma \left( \alpha \right) 
}\underset{a}{\overset{\tau }{\int }}f\left( t\right) \left( \tau -t\right)
^{\alpha -1}dt.  \label{eq1}
\end{equation}
\end{Definition}

\begin{Definition}
The Atangana- Baleanu (AB) integral of $f$ is defined as~\cite{BalFer18}:
\begin{equation}
^{AB}I_{0+}^{\alpha }f(t)=\frac{1-\alpha }{B(\alpha )}+\frac{\alpha }{
B(\alpha )}^{RL}I_{0+}^{\alpha }f\left( t\right), \alpha \in (0,1).  \label{eq9a}
\end{equation}
\end{Definition}

\begin{Definition}
For $0\leq \alpha \leq 1$ and $f\in H^{1}(t_{0},t_{f}),$ the $\alpha $-order
left and right AB fractional derivative in the Caputo sense is defined as~\cite{Atangana2016}:
\begin{eqnarray}
_{t_{0}}^{ABC}D_{t}^{\alpha }f(t) &=&\frac{B(\alpha )}{1-\alpha }
\int_{t_{0}}^{t}E_{\alpha }\left[ -\frac{\alpha }{1-\alpha }(t-\theta
)^{\alpha }\right] f^{\prime }(\theta )d\theta,  \label{eq5} \\
_{t}^{ABC}D_{t_{f}}^{\alpha }f(t) &=&-\frac{B(\alpha )}{1-\alpha }
\int_{t}^{t_{f}}E_{\alpha }\left[ -\frac{\alpha }{1-\alpha }(\theta
-t)^{\alpha }\right] f^{\prime }(\theta )d\theta,  \label{eq6}
\end{eqnarray}
where $B(\alpha )$ is satisfying $B(0)=B(1)=1$.
\end{Definition}

\begin{Definition}
The Laplace transformation of AB derivative is given by~\cite{Atangana2016}:
\begin{equation}
\mathscr{L} \left\{ _{0}^{ABC}D_{t}^{\alpha }f(t)\right\} (s)
=\frac{B(\alpha)}{1-\alpha }\frac{s^{\alpha }\mathscr{L} 
\left\{ f(t)\right\} (s)-s^{\alpha}f(0)}{s^{\alpha }+\frac{\alpha }{1-\alpha }}.  
\label{eq9}
\end{equation}
\end{Definition}

\begin{Definition}
{Consider the fractional-order system 
${}^{ABC}_{0}D_t^{\alpha}x(t)=F(x(t))$ with $0<\alpha<1$.
An equilibrium point $x$ is called locally 
asymptotically stable if solutions that start sufficiently
close to $x$ remain close to $x$ and satisfy $\|x(t)-x\| \to 0$ as $t\to\infty$.}
\end{Definition}


\section{Model~Formulation}
\label{sec:3}

The basis model of the present study consists of humans and rodents. It
describes the interspecies spread behavior of monkeypox. In~this context,
the humans are classified as susceptible $S_{h}(t)$, exposed $E_{h}(t)$,
infected $I_{h}(t)$, isolated $Q_{h}(t),$ and recovered $R_{h}(t)$.
Similarly, the~rodent population includes susceptible $S_{r}(t)$, exposed 
$E_{r}(t)$ and infected $I_{r}(t)$ rodents. Thus, the~total human and rodent
populations are denoted as $N_{h}(t)$ and $N_{r}(t)$, respectively.
Migrating or newborn humans are supplemented to the $S_{h}(t)$ at the rate
of $\theta _{h}$, and~$\theta _{r}$ is the participation rate for 
$S_{r}(t)$. Susceptible humans can be exposed to monkeypox by interacting
with infected rodents at a rate $\ \beta _{1}$ and with infected humans at a
rate $\beta _{2}$. Also, susceptible rodents can be exposed to monkeypox by
interacting with infected rodents at a rate $\beta _{3}$. The~rate of
exposed human to infected humans is $\alpha _{1}.$ The rate defined as
suspicious case is $\alpha _{2}.$ The rate of undetected after medical
diagnosis is $\varphi $. The~rate of progression from the isolated class to
the recovered class is $\tau $. The~recovery rate for human is $\gamma $.
The natural death rate of humans is $\mu _{h},$ the natural mortality rate
of rodents is $\mu _{r}.$ By $\delta _{h}$ we represent the disease-related mortality
for humans, while $\delta _{r}$ represents disease-related mortality for
rodents.

The non-linear ordinary differential equation system resulting from the
interactions between the compartments is as follows:
{\begin{equation}
\left. 
\begin{array}{l}
\frac{dS_{h}\left( t\right) }{dt}=\theta _{h}-\frac{(\beta _{1}I_{r}\left(
t\right) +\beta _{2}I_{h}\left( t\right) )S_{h}\left( t\right) }{N_{h}\left(
t\right) }-\mu _{h}S_{h}\left( t\right) +\varphi Q_{h}\left( t\right) , \\ 
\frac{dE_{h}\left( t\right) }{dt}=\frac{(\beta _{1}I_{r}\left( t\right)
+\beta _{2}I_{h}\left( t\right) )S_{h}\left( t\right) }{N_{h}\left( t\right) 
}-(\alpha _{1}+\alpha _{2}+\mu _{h})E_{h}\left( t\right) , \\ 
\frac{dI_{h}\left( t\right) }{dt}=\alpha _{1}E_{h}\left( t\right) -(\mu
_{h}+\delta _{h}+\gamma )I_{h}\left( t\right) , \\ 
\frac{dQ_{h}\left( t\right) }{dt}=\alpha _{2}E_{h}\left( t\right) -(\varphi
+\tau +\mu _{h}+\delta _{h})Q_{h}\left( t\right) , \\ 
\frac{dR_{h}\left( t\right) }{dt}=\gamma I_{h}\left( t\right) +\tau
Q_{h}\left( t\right) -\mu _{h}R_{h}\left( t\right) , \\ 
\frac{dS_{r}\left( t\right) }{dt}=\theta _{r}-\frac{\beta _{3}S_{r}\left(
t\right) I_{r}\left( t\right) }{N_{r}\left( t\right) }-\mu _{r}S_{r}\left(
t\right) , \\ 
\frac{dE_{r}\left( t\right) }{dt}=\frac{\beta _{3}S_{r}\left( t\right)
I_{r}\left( t\right) }{N_{r}\left( t\right) }-(\mu _{r}+\alpha
_{3})E_{r}\left( t\right) , \\ 
\frac{dI_{r}\left( t\right) }{dt}=\alpha _{3}E_{r}\left( t\right) -(\mu
_{r}+\delta _{r})I_{r}\left( t\right) .
\end{array}
\right\}   
\label{eq10}
\end{equation}}

The discussed integer-order model $\left( \ref{eq10}\right) $ was first
proposed by Peter~et~al.~\cite{Peter1} to understand the dynamics of the
monkeypox virus. Unfortunately, integer-order models have limitations such as
not being able to represent the memory and inheritance characteristics of
systems due to their local definition. Although~RL and Caputo derivatives,
which are the leading operators of fractional calculus, have been successful
in eliminating this deficiency, regular fractional operators have been
introduced that overcome various difficulties arising from the singular
structures of RL and Caputo operators. For~this aim, in~2016, Atangana and
Baleanu proposed a new fractional operator with Mittag-Leffler 
kernel~\cite{Atangana2016}. Since the AB fractional derivative does not pose a
singularity problem at the onset and the end of the disease due to its
definition, it provides a better idea of the state of the disease at these
critical moments. This is why the system analysis and optimal control of the
model considered in this study are examined with the AB fractional
derivative. We adapt three control functions to the system in accordance
with the model dynamics. In~the controlled model, {the control functions $u_1(\cdot)$, 
$u_2(\cdot)$, and~$u_3(\cdot)$} represent vaccination of susceptible humans, 
treatment of infected humans, and~quarantine of infected humans, respectively. 
As a result, the~model in which unit consistency is ensured is written as follows: 
\begin{equation}
\left. 
\begin{array}{l}
_{0}^{ABC}D_{t}^{\alpha }S_{h}\left( t\right) =\theta _{h}^{\alpha }-\frac{
(\beta _{1}^{\alpha }I_{r}\left( t\right) +\beta _{2}^{\alpha }I_{h}\left(
t\right) )S_{h}\left( t\right) }{N_{h}\left( t\right) }-\mu _{h}^{\alpha
}S_{h}\left( t\right) +\varphi ^{\alpha }Q_{h}\left( t\right) -u_{1}\left(
t\right) S_{h}\left( t\right) , \\ 
_{0}^{ABC}D_{t}^{\alpha }E_{h}\left( t\right) =\frac{(\beta _{1}^{\alpha
}I_{r}\left( t\right) +\beta _{2}^{\alpha }I_{h}\left( t\right) )S_{h}\left(
t\right) }{N_{h}\left( t\right) }-(\alpha _{1}^{\alpha }+\alpha _{2}^{\alpha
}+\mu _{h}^{\alpha })E_{h}\left( t\right) , \\ 
_{0}^{ABC}D_{t}^{\alpha }I_{h}\left( t\right) =\alpha _{1}^{\alpha
}E_{h}\left( t\right) -(\mu _{h}^{\alpha }+\delta _{h}^{\alpha }+\gamma
^{\alpha })I_{h}\left( t\right) -u_{2}\left( t\right) I_{h}\left( t\right)
-u_{3}\left( t\right) I_{h}\left( t\right) , \\ 
_{0}^{ABC}D_{t}^{\alpha }Q_{h}\left( t\right) =\alpha _{2}^{\alpha
}E_{h}\left( t\right) -(\varphi ^{\alpha }+\tau ^{\alpha }+\mu _{h}^{\alpha
}+\delta _{h}^{\alpha })Q_{h}\left( t\right) +u_{3}\left( t\right)
I_{h}\left( t\right) , \\ 
_{0}^{ABC}D_{t}^{\alpha }R_{h}\left( t\right) =\gamma ^{\alpha }I_{h}\left(
t\right) +\tau ^{\alpha }Q_{h}\left( t\right) -\mu _{h}^{\alpha }R_{h}\left(
t\right) +u_{1}\left( t\right) S_{h}\left( t\right) +u_{2}\left( t\right)
I_{h}\left( t\right) , \\ 
_{0}^{ABC}D_{t}^{\alpha }S_{r}\left( t\right) =\theta _{r}^{\alpha }-\frac{
\beta _{3}^{\alpha }S_{r}\left( t\right) I_{r}\left( t\right) }{N_{r}\left(
t\right) }-\mu _{r}^{\alpha }S_{r}\left( t\right) , \\ 
_{0}^{ABC}D_{t}^{\alpha }E_{r}\left( t\right) =\frac{\beta _{3}^{\alpha
}S_{r}\left( t\right) I_{r}\left( t\right) }{N_{r}\left( t\right) }-(\mu
_{r}^{\alpha }+\alpha _{3}^{\alpha })E_{r}\left( t\right) , \\ 
_{0}^{ABC}D_{t}^{\alpha }I_{r}\left( t\right) =\alpha _{3}^{\alpha
}E_{r}\left( t\right) -(\mu _{r}^{\alpha }+\delta _{r}^{\alpha })I_{r}\left(
t\right), 
\end{array}
\right\}   \label{eq11}
\end{equation}
with given initial conditions
\begin{equation}
\left(S_{h}(0),E_{h}(0),I_{h}(0),Q_{h}(0),R_{h}(0),S_{r}(0),E_{r}(0),I_{r}(0)
\right).  
\label{eq12}
\end{equation}
Also, the~admissible set of controls is defined by
\begin{equation}
U_{ad}=\left\{ \left( u_{1}(\cdot),u_{2}(\cdot),u_{3}(\cdot)\right) \mid 0\leq
u_{1}(t),u_{2}(t),u_{3}(t)\leq 0.9,\text{ }0\leq t\leq t_{f}\right\}.
\label{eq32}
\end{equation}


\section{System~Analysis}
\label{sec:4}

Let us first guarantee positivity and boundedness of
the solution region. These properties are indispensable as
they show that the system is well-defined~biologically.

\subsection{The Feasibility of~Region}

Theorem \ref{Theorem1} proves the non-negativity and boundedness of the solutions:

\begin{Theorem}\label{Theorem1}
The region $\Omega =$ $\Omega _{h}\times \Omega _{r}$ for the model 
\eqref{eq11} such that 
{\begin{equation}
\Omega _{h}=\left\{ \left( S_{h},E_{h},I_{h},Q_{h},R_{h}\right) 
\in \mathbb{R}^{5}:S_{h}\left( t\right), E_{h}\left( t\right), 
I_{h}\left( t\right), Q_{h}\left( t\right), R_{h}\left( t\right)\geq 0\right\}  
\label{eq13a}
\end{equation}
and
\begin{equation}
\Omega _{r}=\left\{ \left( S_{r},E_{r},I_{r}\right) \in \mathbb{R}^{3}:
S_{r}\left( t\right), E_{r}\left( t\right), I_{r}\left( t\right)\geq 0\right\},  
\label{eq13b}
\end{equation}}
is a positive invariant set.
\end{Theorem}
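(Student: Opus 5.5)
The plan follows the standard route for fractional epidemic models: check the sign of each right-hand side of \eqref{eq11} on the boundary of $\Omega$, then turn that sign information into non-negativity of the solution through the Laplace transform of the AB derivative \eqref{eq9}. The paper's likely claim of boundedness comes from the same computation applied to the total populations.

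\textbf{Step 1 (boundary signs).} Take controls in $U_{ad}$ and non-negative initial data \eqref{eq12}. For each component, evaluate its equation on the face of $\Omega$ where that component is zero and the others are non-negative:
\begin{align*}
\left.{}_{0}^{ABC}D_{t}^{\alpha }S_{h}\right|_{S_h=0} &= \theta_h^\alpha+\varphi^\alpha Q_h\ge 0, \\
\left.{}_{0}^{ABC}D_{t}^{\alpha }E_{h}\right|_{E_h=0} &= \frac{(\beta_1^\alpha I_r+\beta_2^\alpha I_h)S_h}{N_h}\ge 0, \\
\left.{}_{0}^{ABC}D_{t}^{\alpha }I_{h}\right|_{I_h=0} &= \alpha_1^\alpha E_h\ge 0, \\
\left.{}_{0}^{ABC}D_{t}^{\alpha }Q_{h}\right|_{Q_h=0} &= \alpha_2^\alpha E_h+u_3 I_h\ge 0, \\
\left.{}_{0}^{ABC}D_{t}^{\alpha }R_{h}\right|_{R_h=0} &= \gamma^\alpha I_h+\tau^\alpha Q_h+u_1S_h+u_2I_h\ge 0 .
\end{align*}
The rodent components give $\theta_r^\alpha\ge0$, $\beta_3^\alpha S_rI_r/N_r\ge0$ and $\alpha_3^\alpha E_r\ge 0$. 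These inequalities use only $u_i\ge 0$ and the non-negativity of the other compartments.

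\textbf{Step 2 (sign to non-negativity).} Each component $x$ satisfies an equation of the form ${}_{0}^{ABC}D_{t}^{\alpha }x=g(t)-k(t)x$. Here $g\ge0$ as long as the other components are non-negative, and $k$ is bounded, e.g. $k\le\mu_h^\alpha+\delta_h^\alpha+\gamma^\alpha+1.8$ for $I_h$. Replacing $k$ by its upper bound $K$ gives the linear comparison problem ${}_{0}^{ABC}D_{t}^{\alpha }y\ge -Ky$. Apply \eqref{eq9} to ${}_{0}^{ABC}D_{t}^{\alpha }y=-Ky+h$ with $h\ge0$, and solve for $\mathscr{L}\{y\}$. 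The homogeneous part becomes
\[
y(0)\,\frac{B(\alpha)s^\alpha}{(B(\alpha)+K(1-\alpha))s^\alpha+K\alpha},
\]
whose inverse is $y(0)\,c\,E_\alpha(-\lambda t^\alpha)$ with $c,\lambda>0$, and this is non-negative. The forced part is handled the same way. It is a combination of $h$ and a convolution of $h$ with a Mittag-Leffler type kernel, $t^{\alpha-1}E_{\alpha,\alpha}(-\lambda t^\alpha)$, which is non-negative because $E_\alpha(-\lambda t^\alpha)$ is completely monotone for $0<\alpha<1$. Hence $x(t)\ge y(t)\ge 0$.

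To avoid circularity, argue by contradiction at the first exit time $t^*$ from $\Omega$. On $[0,t^*]$ all other components are non-negative, so the representation above applies and forbids the crossing.

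\textbf{Step 3 (boundedness).} Sum the human equations: the infection and control terms cancel, giving ${}_{0}^{ABC}D_{t}^{\alpha }N_h\le\theta_h^\alpha-\mu_h^\alpha N_h$. The same Laplace and Mittag-Leffler argument bounds $N_h(t)$ by $N_h(0)$ times a Mittag-Leffler decay plus a term tending to $\theta_h^\alpha/\mu_h^\alpha$. So $N_h$ stays bounded, and the rodent total $N_r$ is treated analogously.

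\textbf{Main obstacle.} The crucial step is Step 2. Unlike the Caputo case, the AB derivative has no clean comparison principle in the paper. The positivity of the inverse Laplace kernels, including the instantaneous term $\frac{1-\alpha}{B(\alpha)}h(t)$ that the AB inversion produces, has to be checked explicitly. Variable coefficients also cannot be handled by a naive Laplace transform, so the first-crossing argument with a constant bound $K$ must be set up carefully.
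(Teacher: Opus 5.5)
Your Step 1 is exactly the paper's entire proof. The paper lists these same boundary values of the right-hand sides of \eqref{eq11} and then simply asserts that all solutions stay in $\mathbb{R}^8_+$. So your Step 2 supplies rigor the paper never provides. Step 3 is not needed for this statement: it is the paper's next theorem, which is proved there by the same Laplace/Mittag-Leffler computation.

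The Laplace computation in Step 2 is correct up to two slips, neither of which changes any sign.
\begin{itemize}
\item The homogeneous numerator should be $B(\alpha)s^{\alpha-1}$, not $B(\alpha)s^{\alpha}$.
\item After splitting off $-Kx$, the instantaneous coefficient is $(1-\alpha)/c_K$ with $c_K=B(\alpha)+(1-\alpha)K$, not $(1-\alpha)/B(\alpha)$.
\end{itemize}

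\textbf{The first-exit step does not close as written.} Knowing $h=g+(K-k)x\ge 0$ on $[0,t^*]$ only gives $x(t^*)\ge 0$, which you already know from the definition of $t^*$, so nothing is contradicted. What you need is strict positivity at $t^*$. Since $h(t)$ contains $x(t)$ itself, solve for $x(t)$:
\begin{equation*}
x(t)=\frac{B(\alpha)\,x(0)\,E_\alpha(-\lambda t^\alpha)+(1-\alpha)\,g(t)+\frac{\alpha B(\alpha)}{c_K}\int_0^t (t-s)^{\alpha-1}E_{\alpha,\alpha}\bigl(-\lambda (t-s)^\alpha\bigr)h(s)\,ds}{B(\alpha)+(1-\alpha)k(t)},\qquad \lambda=\frac{\alpha K}{c_K}.
\end{equation*}
If every component starts strictly positive, this gives $x(t^*)>0$ for every component. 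Continuity then contradicts the definition of $t^*$, and the argument is complete.

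\textbf{Zero initial components remain open.} The theorem allows them, and the paper's own simulations use $Q_h(0)=R_h(0)=0$. For such a component the Mittag-Leffler term vanishes. Strictness must then come from $g(t^*)>0$ or from $h\not\equiv 0$ on $[0,t^*]$, and the case $t^*=0$ needs separate treatment.

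The usual shortcut, perturbing the initial data and passing to the limit, is not readily available here. For every $x\in H^1$ one has ${}^{ABC}_{0}D^{\alpha}_{t}x(t)\to 0$ as $t\to 0^{+}$, so with continuous controls a solution can exist only if the right-hand side of \eqref{eq11} vanishes at $t=0$. Perturbed data therefore generally admit no solution at all, and the zero-data case has to be argued directly.
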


\begin{proof}
For the system $\left( \ref{eq11}\right) $, the~following results are valid: 
\begin{eqnarray*}
_{0}^{ABC}D_{t}^{\alpha }S_{h}\left( t\right) &\mid &_{S_{h}=0}=\theta _{h}^{\alpha
}+\varphi ^{\alpha }Q_{h}\left( t\right)\geq 0, \\
_{0}^{ABC}D_{t}^{\alpha }E_{h} \left( t\right)&\mid &_{E_{h}=0}=\frac{(\beta _{1}^{\alpha
}I_{r}\left( t\right)+\beta _{2}^{\alpha }I_{h}\left( t\right))S_{h}\left( t\right)}{N_{h}\left( t\right)}\geq 0, \\
_{0}^{ABC}D_{t}^{\alpha }I_{h}\left( t\right) &\mid &_{I_{h}=0}=\alpha _{1}^{\alpha
}E_{h}\left( t\right)\geq 0, \\
_{0}^{ABC}D_{t}^{\alpha }Q_{h}\left( t\right) &\mid &_{Q_{h}=0}=\alpha _{2}^{\alpha
}E_{h}\left( t\right)+u_{3}\left( t\right)I_{h}\left( t\right)\geq 0, \\
_{0}^{ABC}D_{t}^{\alpha }R_{h}\left( t\right) &\mid &_{R_{h}=0}=\gamma ^{\alpha }I_{h}\left( t\right)+\tau
^{\alpha }Q_{h}\left( t\right)+u_{1}\left( t\right)S_{h}\left( t\right)+u_{2}\left( t\right)I_{h}\left( t\right)\geq 0, \\
_{0}^{ABC}D_{t}^{\alpha }S_{r}\left( t\right) &\mid &_{S_{r}=0}=\theta _{r}^{\alpha }\geq 0, \\
_{0}^{ABC}D_{t}^{\alpha }E_{r}\left( t\right) &\mid &_{E_{r}=0}=\frac{\beta _{3}^{\alpha
}S_{r}\left( t\right)I_{r}\left( t\right)}{N_{r}\left( t\right)}\geq 0, \\
_{0}^{ABC}D_{t}^{\alpha }I_{r}\left( t\right) &\mid &_{I_{r}=0}=\alpha _{3}^{\alpha
}E_{r}\left( t\right)\geq 0.
\end{eqnarray*}
It follows that all solutions of $\left( \ref{eq11}\right) $ are nonnegative
and remain in $\mathbb{R}_{+}^{8}$, so that the region $\Omega$ is positively invariant.
\end{proof}

\begin{Theorem}
Let
\begin{equation}
\Omega _{h}=\left\{ N_{h} \in \mathbb{R}_{+}:
0\leq N_{h}(t) \leq \frac{\theta _{h}^{\alpha }}{\mu _{h}^{\alpha }}
\right\}  
\label{eq14}
\end{equation}
and
\begin{equation}
\Omega _{r}=\left\{N_{r} \in \mathbb{R}_{+}:
0\leq N_{r}(t)\leq \frac{\theta _{r}^{\alpha }}{\mu _{r}^{\alpha }}\right\}  
\label{eq15}
\end{equation}
with $\Omega$ being defined as
\begin{equation*}
\Omega =\Omega _{h}\times \Omega _{r}.
\end{equation*}
If $N_{h}(0)\leq $ $\frac{\theta _{h}^{\alpha }}{\mu _{h}^{\alpha }}$ and 
$N_{r}(t)\leq \frac{\theta _{r}^{\alpha }}{\mu _{r}^{\alpha }}$, then 
$\Omega$ is the positive invariant region for the model given with 
\eqref{eq11} and \eqref{eq12}.
\end{Theorem}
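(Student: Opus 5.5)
We add the equations of \eqref{eq11} within each population, take the Laplace transform of the resulting scalar Atangana--Baleanu inequality using \eqref{eq9}, and invert it to obtain a Mittag--Leffler type bound. Non-negativity of all components is already given by Theorem~\ref{Theorem1}, so only the upper bounds need proof.

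\textbf{Step 1: the human total.} Summing the first five equations of \eqref{eq11}, the incidence terms and the control terms cancel in pairs. The terms $u_1S_h$, $u_2I_h$ and $u_3I_h$ move mass between human compartments, and $\varphi^\alpha Q_h$ returns from $Q_h$ to $S_h$. This leaves
\begin{equation*}
{}_{0}^{ABC}D_{t}^{\alpha }N_{h}(t)=\theta_h^\alpha-\mu_h^\alpha N_h(t)-\delta_h^\alpha\bigl(I_h(t)+Q_h(t)\bigr)\leq \theta_h^\alpha-\mu_h^\alpha N_h(t),
\end{equation*}
where the inequality uses $I_h,Q_h\ge0$ from Theorem~\ref{Theorem1}.

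\textbf{Step 2: the rodent total.} Summing the last three equations gives ${}_{0}^{ABC}D_{t}^{\alpha }N_r\le\theta_r^\alpha-\mu_r^\alpha N_r$ in the same way.

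\textbf{Step 3: solving the comparison equation.} Consider the linear equation ${}_{0}^{ABC}D_{t}^{\alpha }N=\theta-\mu N$ with $N(0)=N_0$, and apply \eqref{eq9}. Writing $a=\alpha/(1-\alpha)$ and $B=B(\alpha)$, the transform satisfies
\begin{equation*}
\mathscr{L}\{N\}(s)=\frac{\bigl(Bs^\alpha+(1-\alpha)\mu\bigr)^{-1}}{1}\Bigl(BN_0s^{\alpha-1}+\frac{\theta(1-\alpha)}{s}\bigl(s^\alpha+a\bigr)\Bigr)\frac{1}{1-\alpha}\cdot(1-\alpha).
\end{equation*}
After normalising the denominator to $s^\alpha+\lambda$ with $\lambda=\alpha\mu/(B+(1-\alpha)\mu)>0$, inversion expresses $N(t)$ as a combination of $E_\alpha(-\lambda t^\alpha)$ and $t^\alpha E_{\alpha,\alpha+1}(-\lambda t^\alpha)$. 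Using the identity $1-E_\alpha(-\lambda t^\alpha)=\lambda t^\alpha E_{\alpha,\alpha+1}(-\lambda t^\alpha)$, this should rearrange to
\begin{equation*}
N(t)=\frac{\theta}{\mu}+\Bigl(N_0-\frac{\theta}{\mu}\Bigr)c\,E_\alpha(-\lambda t^\alpha),\qquad 0<c\le 1,
\end{equation*}
with $c=B/(B+(1-\alpha)\mu)$. Since $E_\alpha(-\lambda t^\alpha)\in(0,1]$ is completely monotone for $0<\alpha<1$, the hypothesis $N_0\le\theta/\mu$ gives $N(t)\le\theta/\mu$ for all $t\ge0$. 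Applying this with $(\theta,\mu,N_0)=(\theta_h^\alpha,\mu_h^\alpha,N_h(0))$ and with $(\theta_r^\alpha,\mu_r^\alpha,N_r(0))$ yields the two bounds defining $\Omega_h$ and $\Omega_r$. Together with Theorem~\ref{Theorem1}, this makes $\Omega$ positively invariant. I read the hypothesis on $N_r$ as a condition at $t=0$, matching the one on $N_h$.

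\textbf{Main obstacle.} The delicate step is passing from the differential \emph{inequality} to the bound on $N$. There is no clean pointwise comparison principle for AB derivatives. I would first write ${}_{0}^{ABC}D_{t}^{\alpha }N=\theta-\mu N-g(t)$ with $g\ge0$, so that the inequality becomes an equality with an extra forcing term. Solving this linearly, $g$ enters through the resolvent. I would then try to show that the resolvent kernel is non-negative: it is built from $(1-\alpha)/B$ times a delta part plus a derivative-of-Mittag--Leffler part, both of which have sign-definite structure, so the $g$-contribution can only lower $N$.

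If that positivity argument becomes messy, the fallback is the standard approach in this literature: state the bound for the equality case and invoke the monotone comparison argument.
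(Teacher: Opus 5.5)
Your proposal is correct. It shares the paper's skeleton: sum the compartments, drop $-\delta_h^\alpha(I_h+Q_h)$, transform via \eqref{eq9}, and invert into Mittag--Leffler functions. It departs from the paper exactly where the paper is weakest.

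\textbf{What the paper does.} It transforms the differential \emph{inequality} and solves for $\bar N_h(s)$. It then passes from $\bar N_h(s)\le\cdots$ back to $N_h(t)\le\cdots$, which the Laplace transform does not justify, and in the end states the bound only ``as $t\to\infty$''.

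\textbf{What you do, and what it buys.} You solve the equality case exactly. Your rearrangement $N(t)=\theta/\mu+c\,(N_0-\theta/\mu)E_\alpha(-\lambda t^\alpha)$, with $c=B/K$ and $K=B+(1-\alpha)\mu$, is right. Combined with $E_\alpha(-\lambda t^\alpha)\ge 0$, it gives the bound for every $t\ge0$, which is what positive invariance actually needs. Your primary plan for the inequality also works, so the fallback is unnecessary. With forcing $g=\delta_h^\alpha(I_h+Q_h)\ge0$, the $g$-contribution to $\mathscr{L}\{N\}$ is $-\bar g(s)\bigl((1-\alpha)s^\alpha+\alpha\bigr)/\bigl(K(s^\alpha+\lambda)\bigr)$, and
\[
\frac{(1-\alpha)s^\alpha+\alpha}{s^\alpha+\lambda}=(1-\alpha)+\frac{\alpha-(1-\alpha)\lambda}{s^\alpha+\lambda},\qquad \alpha-(1-\alpha)\lambda=\frac{\alpha B}{K}.
\]
The kernel is therefore $K^{-1}\bigl[(1-\alpha)\delta(t)+\frac{\alpha B}{K}\,t^{\alpha-1}E_{\alpha,\alpha}(-\lambda t^\alpha)\bigr]$. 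Note the delta weight is $(1-\alpha)/K$, not $(1-\alpha)/B$. Your sketch leaves one thing unchecked: positivity of the second weight, equivalent to $\lambda<\alpha/(1-\alpha)$, which holds because $B>0$. Write that out. Nonnegativity of $t^{\alpha-1}E_{\alpha,\alpha}(-\lambda t^\alpha)=-\lambda^{-1}\frac{d}{dt}E_\alpha(-\lambda t^\alpha)$ then finishes the argument.

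\textbf{Minor fixes.} First, the denominator in your displayed $\mathscr{L}\{N\}(s)$ should be $(B+(1-\alpha)\mu)s^\alpha+\alpha\mu$, not $Bs^\alpha+(1-\alpha)\mu$; your $\lambda$ already matches the correct one. Second, your closed form gives $N(0^+)=\theta/\mu+c(N_0-\theta/\mu)\neq N_0$ whenever $N_0\neq\theta/\mu$. This is a known artifact of AB--Caputo equations: the ABC derivative of an $H^1$ function tends to $0$ as $t\to0^+$. It is harmless for the upper bound, but you should say so rather than write ``with $N(0)=N_0$''.

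In short, the paper's route is the quick standard one in this literature but does not actually prove the stated bound. Yours costs a resolvent computation and the complete monotonicity of $E_\alpha$ and $E_{\alpha,\alpha}$, and it does prove the bound for all $t$.
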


\begin{proof}
By adding the first five equations side by side in the model \eqref{eq11}, 
we arrive to
\begin{equation}
_{0}^{ABC}D_{t}^{\alpha }N_{h}(t)=\theta _{h}^{\alpha }-\mu _{h}^{\alpha}N_{h}(t)
-\delta _{h}^{\alpha }\left( I_{h}(t)+Q_{h}(t)\right).
\label{eq15a}
\end{equation}
Therefore, Equation \eqref{eq15a} leads to 
\begin{equation*}
_{0}^{ABC}D_{t}^{\alpha }N_{h}\left( t\right) \leq \theta _{h}^{\alpha }-\mu
_{h}^{\alpha }N_{h}\left( t\right) .
\end{equation*}
The Laplace transform of this inequality leads to 
\begin{equation*}
\mathscr{L}\left[ _{0}^{ABC}D_{t}^{\alpha }N_{h}\left( t\right) \right]
\left( s\right) =\frac{\theta _{h}^{\alpha }}{s}-\mu _{h}^{\alpha }
\mathscr{L}\left[ N_{h}\left( t\right) \right] \left( s\right) ,
\end{equation*}
\begin{equation*}
\frac{B(\alpha )}{1-\alpha }\frac{s^{\alpha }\bar{N}_{h}(s)-s^{\alpha
-1}N_{h}(0)}{s^{\alpha }+\frac{\alpha }{1-\alpha }}+\mu _{h}^{\alpha }\leq 
\frac{\theta _{h}^{\alpha }}{s}-\mu _{h}^{\alpha }\bar{N}_{h}(s),
\end{equation*}
where $\bar{N}_{h}\left( s\right) =$ $\left[ N_{h}\left( t\right) \right]
\left( s\right) $ and $N_{h}(0)$ is the total human population at the beginning.
Hence,
\begin{equation*}
\left( \frac{B(\alpha )s^{\alpha }}{\left( 1-\alpha \right) \left( s^{\alpha
}+\frac{\alpha }{1-\alpha }\right) }+\mu _{h}^{\alpha }\right) 
\bar{N}_{h}(s)\leq \theta _{h}^{\alpha }s^{\alpha -\left( \alpha +1\right) }
+\frac{B(\alpha )s^{\alpha -1}N_{h}(0)}{\left( 1-\alpha \right) \left( s^{\alpha }
+\frac{\alpha }{1-\alpha }\right)},
\end{equation*}
\begin{eqnarray*}
\bar{N}_{h}(s) &\leq &\frac{\theta _{h}^{\alpha }\left[ \left( 1-\alpha
\right) s^{\alpha }+\alpha \right] s^{\alpha -\left( \alpha +1\right)
}+B(\alpha )s^{\alpha -1}N_{h}(0)}{B(\alpha )s^{\alpha }+\mu _{h}^{\alpha
}\left( 1-\alpha \right) s^{\alpha }+\alpha \mu _{h}^{\alpha }}, \\
&& \\
\bar{N}_{h}(s) &\leq &\frac{\theta _{h}^{\alpha }\alpha }{\left( 1-\alpha
\right) \mu _{h}^{\alpha }+B(\alpha )}\frac{s^{\alpha -\left( \alpha
+1\right) }}{s^{\alpha }+\frac{\alpha \mu _{h}^{\alpha }}{\left( 1-\alpha
\right) \mu _{h}^{\alpha }+B(\alpha )}} \\
&& \\
&&+\frac{s^{\alpha -1}\left[ \theta _{h}^{\alpha }\left( 1-\alpha \right)
+B(\alpha )N_{h}(0)\right] }{s^{\alpha }\left[ \left( 1-\alpha \right) \mu
_{h}^{\alpha }+B(\alpha )\right] +\alpha \mu _{h}^{\alpha }}.
\end{eqnarray*}
Applying the inverse transform, we obtain
\begin{eqnarray}
N_{h}(t) &\leq &\frac{\theta _{h}^{\alpha }\alpha t^{\alpha }}{\left(
1-\alpha \right) \mu _{h}^{\alpha }+B(\alpha )}E_{\alpha,\alpha +1}\left( 
-\frac{\alpha \mu _{h}^{\alpha }t^{\alpha }}{\left( 1-\alpha \right) \mu
_{h}^{\alpha }+B(\alpha )}\right)  \notag \\
&&+\left[ \left( 1-\alpha \right) \frac{\theta _{h}^{\alpha } +B(\alpha
)N_{h}(0)}{\left( 1-\alpha \right) \mu _{h}^{\alpha }+B(\alpha )}\right]
E_{\alpha,1}\left( -\frac{\alpha \mu _{h}^{\alpha }t^{\alpha }}{\left(
1-\alpha \right) \mu _{h}^{\alpha }+B(\alpha )}\right),  \notag \\
N_{h}(t) &\leq &\frac{\theta _{h}^{\alpha }}{\mu _{h}^{\alpha }},
\label{eq16}
\end{eqnarray}
where the two-parameter Mittag-Leffler function is bounded for all $t>0$ and has
an asymptotic behavior~\cite{Atangana2016}. From~the inequality \eqref{eq16}, 
$N_{h}(t)\leq \frac{\theta _{h}^{\alpha }}{\mu _{h}^{\alpha}}$ 
as $t\rightarrow \infty $. Thence, $N_{h}(t)$ and all other variables of
the monkeypox model $\left( \ref{eq11}\right) $ are bounded in the $\Omega
_{h}$ region. Similar steps are followed by adding the last three equations
in $\left( \ref{eq11}\right) $. Thus, we get $N_{r}(t)\leq \frac{\theta
_{r}^{\alpha }}{\mu _{r}^{\alpha }}$ as $t\rightarrow \infty $.
\end{proof}

\subsection{Existence and~Uniqueness}

Banach fixed point theorem is utilized to show the existence of the solutions
 in the following steps.

Taking the AB fractional integral of both
sides of the model $\left( \ref{eq11}\right) $, we arrive
\begin{eqnarray*}
S_{h}(t)-S_{h}(0) &=&^{AB}I_{0+}^{\alpha }\left\{ \theta _{h}^{\alpha }-
\frac{(\beta _{1}^{\alpha }I_{r}\left( t\right) +\beta _{2}^{\alpha
}I_{h}\left( t\right) )S_{h}\left( t\right) }{N_{h}\left( t\right) }-\mu
_{h}^{\alpha }S_{h}\left( t\right) +\varphi ^{\alpha }Q_{h}\left( t\right)
-u_{1}\left( t\right) S_{h}\left( t\right) \right\}, \\
E_{h}(t)-E_{h}(0) &=&^{AB}I_{0+}^{\alpha }\left\{ \frac{(\beta _{1}^{\alpha
}I_{r}\left( t\right) +\beta _{2}^{\alpha }I_{h}\left( t\right) )S_{h}\left(
t\right) }{N_{h}\left( t\right) }-(\alpha _{1}^{\alpha }+\alpha _{2}^{\alpha
}+\mu _{h}^{\alpha })E_{h}\left( t\right) \right\}, \\
I_{h}(t)-I_{h}(0) &=&^{AB}I_{0+}^{\alpha }\left\{ \alpha _{1}^{\alpha
}E_{h}\left( t\right) -(\mu _{h}^{\alpha }+\delta _{h}^{\alpha }+\gamma
^{\alpha })I_{h}\left( t\right) -u_{2}\left( t\right) I_{h}\left( t\right)
-u_{3}\left( t\right) I_{h}\left( t\right) \right\}, \\
Q_{h}(t)-Q_{h}(0) &=&^{AB}I_{0+}^{\alpha }\left\{ \alpha _{2}^{\alpha
}E_{h}\left( t\right) -(\varphi ^{\alpha }+\tau ^{\alpha }+\mu _{h}^{\alpha
}+\delta _{h}^{\alpha })Q_{h}\left( t\right) +u_{3}\left( t\right)
I_{h}\left( t\right) \right\}, \\
R_{h}(t)-R_{h}(0) &=&^{AB}I_{0+}^{\alpha }\left\{ \gamma ^{\alpha
}I_{h}\left( t\right) +\tau ^{\alpha }Q_{h}\left( t\right) -\mu _{h}^{\alpha
}R_{h}\left( t\right) +u_{1}\left( t\right) S_{h}\left( t\right)
+u_{2}\left( t\right) I_{h}\left( t\right) \right\}, \\
S_{r}(t)-S_{r}(0) &=&^{AB}I_{0+}^{\alpha }\left\{ \theta _{r}^{\alpha }-
\frac{\beta _{3}^{\alpha }S_{r}\left( t\right) I_{r}\left( t\right) }{
N_{r}\left( t\right) }-\mu _{r}^{\alpha }S_{r}\left( t\right) \right\}, 
\end{eqnarray*}
\begin{eqnarray*}
E_{r}(t)-E_{r}(0) &=&^{AB}I_{0+}^{\alpha }\left\{ \frac{\beta _{3}^{\alpha
}S_{r}\left( t\right) I_{r}\left( t\right) }{N_{r}}-(\mu _{r}^{\alpha
}+\alpha _{3}^{\alpha })E_{r}\left( t\right) \right\}, \\
I_{r}(t)-I_{r}(0) &=&^{AB}I_{0+}^{\alpha }\left\{ \alpha _{3}^{\alpha
}E_{r}\left( t\right) -(\mu _{r}^{\alpha }+\delta _{r}^{\alpha })I_{r}\left(
t\right) \right\} .
\end{eqnarray*}
Utilizing definition $\left( \ref{eq9a}\right) $, we have
\begin{eqnarray}
S_{h}(t)-S_{h}(0) &=&\frac{1-\alpha }{B\left( \alpha \right) }G_{1}\left(
t,S_{h}\right) +\frac{\alpha }{B(\alpha )}^{RL}I_{0+}^{\alpha }G_{1}\left(
\theta,S_{h}\right),  \notag \\
E_{h}(t)-E_{h}(0) &=&\frac{1-\alpha }{B\left( \alpha \right) }G_{2}\left(
t,E_{h}\right) +\frac{\alpha }{B(\alpha )}^{RL}I_{0+}^{\alpha }G_{2}\left(
\theta,E_{h}\right),  \notag \\
I_{h}(t)-I_{h}(0) &=&\frac{1-\alpha }{B\left( \alpha \right) }G_{3}\left(
t,I_{h}\right) +\frac{\alpha }{B(\alpha )}^{RL}I_{0+}^{\alpha }G_{3}\left(
\theta,I_{h}\right),  \notag \\
Q_{h}(t)-Q_{h}(0) &=&\frac{1-\alpha }{B\left( \alpha \right) }G_{4}\left(
t,Q_{h}\right) +\frac{\alpha }{B(\alpha )}^{RL}I_{0+}^{\alpha }G_{4}\left(
\theta,Q_{h}\right),  \label{eq17} \\
R_{h}(t)-R_{h}(0) &=&\frac{1-\alpha }{B\left( \alpha \right) }G_{5}\left(
t,R_{h}\right) +\frac{\alpha }{B(\alpha )}^{RL}I_{0+}^{\alpha }G_{5}\left(
\theta,R_{h}\right),  \notag \\
S_{r}(t)-S_{r}(0) &=&\frac{1-\alpha }{B\left( \alpha \right) }G_{6}\left(
t,S_{r}\right) +\frac{\alpha }{B(\alpha )}^{RL}I_{0+}^{\alpha }G_{6}\left(
\theta,S_{r}\right),  \notag \\
E_{r}(t)-E_{r}(0) &=&\frac{1-\alpha }{B\left( \alpha \right) }G_{7}\left(
t,E_{r}\right) +\frac{\alpha }{B(\alpha )}^{RL}I_{0+}^{\alpha }G_{7}\left(
\theta,E_{r}\right),  \notag \\
I_{r}(t)-I_{r}(0) &=&\frac{1-\alpha }{B\left( \alpha \right) }G_{8}\left(
t,I_{r}\right) +\frac{\alpha }{M(\alpha )}^{RL}I_{0+}^{\alpha }G_{8}\left(
\theta,I_{r}\right),  \notag
\end{eqnarray}
where\vspace{-3pt}
\begin{equation}
\left. 
\begin{array}{l}
G_{1}\left( t,S_{h}\right) =\theta _{h}^{\alpha }-\frac{(\beta _{1}^{\alpha
}I_{r}\left( t\right) +\beta _{2}^{\alpha }I_{h}\left( t\right) )S_{h}\left(
t\right) }{N_{h}\left( t\right) }-\mu _{h}^{\alpha }S_{h}\left( t\right)
+\varphi ^{\alpha }Q_{h}\left( t\right) -u_{1}\left( t\right) S_{h}\left(
t\right) , \\ 
G_{2}\left( t,E_{h}\right) =\frac{(\beta _{1}^{\alpha }I_{r}\left( t\right)
+\beta _{2}^{\alpha }I_{h}\left( t\right) )S_{h}\left( t\right) }{
N_{h}\left( t\right) }-(\alpha _{1}^{\alpha }+\alpha _{2}^{\alpha }+\mu
_{h}^{\alpha })E_{h}\left( t\right) , \\ 
G_{3}\left( t,I_{h}\right) =\alpha _{1}^{\alpha }E_{h}\left( t\right) -(\mu
_{h}^{\alpha }+\delta _{h}^{\alpha }+\gamma ^{\alpha })I_{h}\left( t\right)
-u_{2}\left( t\right) I_{h}\left( t\right) -u_{3}\left( t\right) I_{h}\left(
t\right) , \\ 
G_{4}\left( t,Q_{h}\right) =\alpha _{2}^{\alpha }E_{h}\left( t\right)
-(\varphi ^{\alpha }+\tau ^{\alpha }+\mu _{h}^{\alpha }+\delta _{h}^{\alpha
})Q_{h}\left( t\right) +u_{3}\left( t\right) I_{h}\left( t\right) , \\ 
G_{5}\left( t,R_{h}\right) =\gamma ^{\alpha }I_{h}\left( t\right) +\tau
^{\alpha }Q_{h}\left( t\right) -\mu _{h}^{\alpha }R_{h}\left( t\right)
+u_{1}\left( t\right) S_{h}\left( t\right) +u_{2}\left( t\right) I_{h}\left(
t\right) , \\ 
G_{6}\left( t,S_{r}\right) =\theta _{r}^{\alpha }-\frac{\beta _{3}^{\alpha
}S_{r}\left( t\right) I_{r}\left( t\right) }{N_{r}\left( t\right) }-\mu
_{r}^{\alpha }S_{r}\left( t\right) , \\ 
G_{7}\left( t,E_{r}\right) =\frac{\beta _{3}^{\alpha }S_{r}\left( t\right)
I_{r}\left( t\right) }{N_{r}}-(\mu _{r}^{\alpha }+\alpha _{3}^{\alpha
})E_{r}\left( t\right) , \\ 
G_{8}\left( t,I_{r}\right) =\alpha _{3}^{\alpha }E_{r}\left( t\right) -(\mu
_{r}^{\alpha }+\delta _{r}^{\alpha })I_{r}\left( t\right).
\end{array}
\right\}  
\label{eq18}
\end{equation}

We denote by $C(D)$ the Banach space of continuous functions on interval 
$D=\left[ 0,T\right] $ and $F=C(D)\times C(D)\times C(D)\times C(D)\times C(D)\times
C(D)\times C(D)\times C(D)$ with norm $\left\Vert \left(
S_{h},E_{h},I_{h},Q_{h},R_{h},S_{r},E_{r},I_{r}\right) \right\Vert =$ $\
\left\Vert S_{h}\right\Vert +\left\Vert E_{h}\right\Vert +\left\Vert
I_{h}\right\Vert +\left\Vert Q_{h}\right\Vert +\left\Vert R_{h}\right\Vert
+\left\Vert S_{r}\right\Vert +\left\Vert E_{r}\right\Vert +\left\Vert
I_{r}\right\Vert $,~where
\begin{equation}
\left. 
\begin{array}{l}
\left\Vert S_{h}\right\Vert =\sup\limits_{t\in D}\left\vert S_{h}\left(
t\right) \right\vert ,\text{ }\left\Vert E_{h}\right\Vert =\sup\limits_{t\in
D}\left\vert E_{h}\left( t\right) \right\vert ,\text{ }\left\Vert
I_{h}\right\Vert =\sup\limits_{t\in D}\left\vert I_{h}\left( t\right)
\right\vert ,\text{ }\left\Vert Q_{h}\right\Vert =\sup\limits_{t\in
D}\left\vert Q_{h}\left( t\right) \right\vert , \\ 
\left\Vert R_{h}\right\Vert =\sup\limits_{t\in D}\left\vert R_{h}\left(
t\right) \right\vert ,\text{ }\left\Vert S_{r}\right\Vert =\sup\limits_{t\in
D}\left\vert S_{r}\left( t\right) \right\vert ,\text{ }\left\Vert
E_{r}\right\Vert =\sup\limits_{t\in D}\left\vert E_{r}\left( t\right)
\right\vert ,\text{ }\left\Vert I_{r}\right\Vert =\sup\limits_{t\in
D}\left\vert I_{r}\left( t\right) \right\vert.
\end{array}
\right.  
\label{eq19}
\end{equation}

\begin{Theorem}
If the inequality 
\begin{equation*}
0\leq \frac{\beta _{1}^{\alpha }m_{8}+\beta _{2}^{\alpha }m_{3}}{N_{h}\left(
t\right) }+\mu _{h}^{\alpha }+u_{1}\left( t\right) <1
\end{equation*}
holds, then $G_{1}$ is a contraction and satisfies the Lipschitz condition.
\end{Theorem}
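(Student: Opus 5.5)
The approach is the standard one for such fractional epidemic models: regard $G_{1}$ as a function of its second argument $S_{h}$ only, with all other state variables frozen, and estimate $\left\Vert G_{1}(t,S_{h})-G_{1}(t,S_{h}^{\ast })\right\Vert$ for two functions $S_{h},S_{h}^{\ast }\in C(D)$. Here $m_{3}$ and $m_{8}$ are understood as bounds $\left\Vert I_{h}\right\Vert \leq m_{3}$ and $\left\Vert I_{r}\right\Vert \leq m_{8}$. These bounds exist because solutions lie in the invariant, bounded region established in the two preceding theorems: nonnegativity, together with $N_{h}\leq \theta _{h}^{\alpha }/\mu _{h}^{\alpha }$ and $N_{r}\leq \theta _{r}^{\alpha }/\mu _{r}^{\alpha }$. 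In the same way we fix $N_{h}(t)$ at its current value, so the incidence term is treated as linear in $S_{h}$. This is the implicit convention of the statement, since $N_{h}$ appears in the hypothesis as a given quantity.

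With this convention, the terms $\theta _{h}^{\alpha }$ and $\varphi ^{\alpha }Q_{h}(t)$ cancel in the difference, and we obtain
\begin{equation*}
G_{1}(t,S_{h})-G_{1}(t,S_{h}^{\ast })=-\left( \frac{\beta _{1}^{\alpha }I_{r}(t)+\beta _{2}^{\alpha }I_{h}(t)}{N_{h}(t)}+\mu _{h}^{\alpha }+u_{1}(t)\right) \left( S_{h}(t)-S_{h}^{\ast }(t)\right) .
\end{equation*}
Taking absolute values and using the triangle inequality gives
\begin{equation*}
\left\vert G_{1}(t,S_{h})-G_{1}(t,S_{h}^{\ast })\right\vert \leq \left( \frac{\beta _{1}^{\alpha }m_{8}+\beta _{2}^{\alpha }m_{3}}{N_{h}(t)}+\mu _{h}^{\alpha }+u_{1}(t)\right) \left\vert S_{h}(t)-S_{h}^{\ast }(t)\right\vert .
\end{equation*}
Here the controls satisfy $0\leq u_{1}\leq 0.9$ by \eqref{eq32}, and all parameters are positive. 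Denote the bracket by $L_{1}$; it is nonnegative for the same reasons. Taking the supremum over $t\in D$ then yields the Lipschitz condition $\left\Vert G_{1}(t,S_{h})-G_{1}(t,S_{h}^{\ast })\right\Vert \leq L_{1}\left\Vert S_{h}-S_{h}^{\ast }\right\Vert$. To keep $L_{1}$ a genuine constant, we either take the supremum of the bracket over $D$ or read the hypothesis as holding uniformly in $t$.

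The hypothesis $0\leq L_{1}<1$ then turns the Lipschitz estimate directly into a contraction estimate, which completes the argument.

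The main obstacle is the dependence of $N_{h}$ on $S_{h}$. If $N_{h}$ is not frozen, the map $S_{h}\mapsto S_{h}/N_{h}$ is not linear, and one must use instead that $\left\vert S_{h}/N_{h}-S_{h}^{\ast }/N_{h}^{\ast }\right\vert$ is controlled via a positive lower bound on $N_{h}$. I would handle this by adopting, as is customary in this literature, the frozen-$N_{h}$ convention signalled by the statement. I would also remark that a positive lower bound on $N_{h}$ over $[0,T]$ (from continuity and $N_{h}(0)>0$) makes the constant finite.
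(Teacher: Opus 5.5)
Your proposal is correct and follows essentially the same route as the paper. The paper also freezes $N_{h}(t)$, $I_{h}$, $I_{r}$ and $Q_{h}$, so that $\theta _{h}^{\alpha }$ and $\varphi ^{\alpha }Q_{h}$ cancel. It then bounds the remaining difference using $\Vert I_{h}\Vert \leq m_{3}$ and $\Vert I_{r}\Vert \leq m_{8}$, names the bracket $L_{1}$, and reads the hypothesis $L_{1}<1$ as the contraction property. Your extra remarks, namely taking the supremum of the bracket over $D$ so that $L_{1}$ is a genuine constant and noting the need for a positive lower bound on $N_{h}$, are sensible refinements of points the paper leaves implicit.
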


\begin{proof}
Let $S_{h}$ and $S_{h1}$ be two functions. Thus, we have
\begin{equation}
\begin{split}	
\Vert G_{1}&\left( t,S_{h}\right) -G_{1}\left( t,S_{h1}\right)
\Vert \\
&=\left\Vert 
\begin{array}{c}
\theta _{h}^{\alpha }-\frac{(\beta _{1}^{\alpha }I_{r}\left( t\right) +\beta
_{2}^{\alpha }I_{h}\left( t\right) )S_{h}\left( t\right) }{N_{h}\left(
t\right) }-\mu _{h}^{\alpha }S_{h}\left( t\right) +\varphi ^{\alpha
}Q_{h}\left( t\right) -u_{1}\left( t\right) S_{h}\left( t\right) \\ 
-\left( \theta _{h}^{\alpha }-\frac{(\beta _{1}^{\alpha }I_{r}\left(
t\right) +\beta _{2}^{\alpha }I_{h}\left( t\right) )S_{h1}\left( t\right) }{
N_{h}\left( t\right) }-\mu _{h}^{\alpha }S_{h1}\left( t\right) +\varphi
^{\alpha }Q_{h}\left( t\right) -u_{1}\left( t\right) S_{h1}\left( t\right)
\right)
\end{array}
\right\Vert  \notag \\
&=\left\Vert \frac{(\beta _{1}^{\alpha }I_{r}\left( t\right) +\beta
_{2}^{\alpha }I_{h}\left( t\right) )}{N_{h}\left( t\right) }\left(
S_{h1}\left( t\right) -S_{h}\left( t\right) \right) +\mu _{h}^{\alpha
}\left( S_{h1}\left( t\right) -S_{h}\left( t\right) \right) +u_{1}\left(
S_{h1}\left( t\right) -S_{h}\left( t\right) \right) \right\Vert  \notag \\
&\leq \left( \frac{\beta _{1}^{\alpha }m_{8}+\beta _{2}^{\alpha }m_{3}}{
N_{h}\left( t\right) }+\mu _{h}^{\alpha }+u_{1}\left( t\right) \right)
\left\Vert S_{h1}\left( t\right) -S_{h}\left( t\right) \right\Vert.  \notag
\end{split}
\end{equation}
Now, let $L_{1}=\frac{\beta _{1}^{\alpha }m_{8}+\beta _{2}^{\alpha }m_{3}}{
N_{h}\left( t\right) }+\mu _{h}^{\alpha }+u_{1}\left( t\right) $, where
$\left\Vert S_{h}\right\Vert \leq m_{1},\left\Vert E_{h}\right\Vert \leq
m_{2},\left\Vert I_{h}\right\Vert \leq m_{3},\linebreak  \left\Vert Q_{h}\right\Vert
\leq m_{4},\left\Vert R_{h}\right\Vert \leq m_{5},\left\Vert
S_{r}\right\Vert \leq m_{6},\left\Vert E_{r}\right\Vert \leq m_{7}$, and~
$\left\Vert I_{r}\right\Vert \leq m_{8}$ are bounded functions. We~obtain
\begin{equation*}
\left\Vert G_{1}\left( t,S_{h}\right) -G_{1}\left( t,S_{h1}\right)
\right\Vert \leq L_{1}\left\Vert S_{h1}\left( t\right) -S_{h}\left( t\right)
\right\Vert .
\end{equation*}
Consequently, the~Lipschitz condition is acquired for kernel $G_{1}$ and 
$0\leq \frac{\beta _{1}^{\alpha }m_{8}+\beta _{2}^{\alpha }m_{3}}{N_{h}\left(
t\right) }+\mu _{h}^{\alpha }+u_{1}\left( t\right) <1$ holds, which
supplies the contraction. Similarly, the~other kernels 
$G_{2}$, $G_{3}$, $G_{4}$, $G_{5}$, $G_{6}$, $G_{7}$ 
and $G_{8}$ supply the Lipschitz
condition and the contraction.
\end{proof}

We can rewrite the kernels in Equation $\left( \ref{eq17}\right) $ as
\vspace{-3pt}
\begin{eqnarray}
S_{h}(t) &=&S_{h}(0)+\frac{1-\alpha }{B\left( \alpha \right) }G_{1}\left(
t,S_{h}\right) +\frac{\alpha }{B(\alpha )}^{RL}I_{0+}^{\alpha }G_{1}\left(
\theta,S_{h}\right),  \notag \\
E_{h}(t) &=&E_{h}(0)+\frac{1-\alpha }{B\left( \alpha \right) }G_{2}\left(
t,E_{h}\right) +\frac{\alpha }{B(\alpha )}^{RL}I_{0+}^{\alpha }G_{2}\left(
\theta,E_{h}\right),  \notag \\
I_{h}(t) &=&I_{h}(0)+\frac{1-\alpha }{B\left( \alpha \right) }G_{3}\left(
t,I_{h}\right) +\frac{\alpha }{B(\alpha )}^{RL}I_{0+}^{\alpha }G_{3}\left(
\theta ,I_{h}\right) ,  \notag \\
Q_{h}(t) &=&Q_{h}(0)+\frac{1-\alpha }{B\left( \alpha \right) }G_{4}\left(
t,Q_{h}\right) +\frac{\alpha }{B(\alpha )}^{RL}I_{0+}^{\alpha }G_{4}\left(
\theta ,Q_{h}\right) ,  \label{eq20} \\
R_{h}(t) &=&R_{h}(0)+\frac{1-\alpha }{B\left( \alpha \right) }G_{5}\left(
t,R_{h}\right) +\frac{\alpha }{B(\alpha )}^{RL}I_{0+}^{\alpha }G_{5}\left(
\theta ,R_{h}\right) ,  \notag \\
S_{r}(t) &=&S_{r}(0)+\frac{1-\alpha }{B\left( \alpha \right) }G_{6}\left(
t,S_{r}\right) +\frac{\alpha }{B(\alpha )}^{RL}I_{0+}^{\alpha }G_{6}\left(
\theta ,S_{r}\right) ,  \notag \\
E_{r}(t) &=&E_{r}(0)+\frac{1-\alpha }{B\left( \alpha \right) }G_{7}\left(
t,E_{r}\right) +\frac{\alpha }{B(\alpha )}^{RL}I_{0+}^{\alpha }G_{7}\left(
\theta ,E_{r}\right) ,  \notag \\
I_{r}(t) &=&I_{r}(0)+\frac{1-\alpha }{B\left( \alpha \right) }G_{8}\left(
t,I_{r}\right) +\frac{\alpha }{B(\alpha )}^{RL}I_{0+}^{\alpha }G_{8}\left(
\theta ,I_{r}\right).  \notag
\end{eqnarray}
Under homogeneous initial conditions and at the time node $t=t_{n}$, we
define the recursive form of $\left( \ref{eq20}\right) $ below:\vspace{-3pt}
\begin{equation}
\begin{array}{cc}
S_{h_{n}}(t)= & \frac{1-\alpha }{B\left( \alpha \right) }G_{1}\left(
t,S_{h_{n-1}}\right) +\frac{\alpha }{B(\alpha )}^{RL}I_{0+}^{\alpha
}G_{1}\left( \theta ,S_{h_{n-1}}\right) , \\ 
E_{h_{n}}(t)= & \frac{1-\alpha }{B\left( \alpha \right) }G_{2}\left(
t,E_{h_{n-1}}\right) +\frac{\alpha }{B(\alpha )}^{RL}I_{0+}^{\alpha
}G_{2}\left( \theta ,E_{h_{n-1}}\right) , \\ 
I_{h_{n}}(t)= & \frac{1-\alpha }{B\left( \alpha \right) }G_{3}\left(
t,I_{h_{n-1}}\right) +\frac{\alpha }{B(\alpha )}^{RL}I_{0+}^{\alpha
}G_{3}\left( \theta ,I_{h_{n-1}}\right) , \\ 
Q_{h_{n}}(t)= & \frac{1-\alpha }{B\left( \alpha \right) }G_{4}\left(
t,Q_{h_{n-1}}\right) +\frac{\alpha }{B(\alpha )}^{RL}I_{0+}^{\alpha
}G_{4}\left( \theta ,Q_{h_{n-1}}\right) , \\ 
R_{h_{n}}(t)= & \frac{1-\alpha }{B\left( \alpha \right) }G_{5}\left(
t,R_{h_{n-1}}\right) +\frac{\alpha }{B(\alpha )}^{RL}I_{0+}^{\alpha
}G_{5}\left( \theta ,R_{h_{n-1}}\right) , \\ 
S_{r_{n}}(t)= & \frac{1-\alpha }{B\left( \alpha \right) }G_{6}\left(
t,S_{r_{n-1}}\right) +\frac{\alpha }{B(\alpha )}^{RL}I_{0+}^{\alpha
}G_{6}\left( \theta ,S_{r_{n-1}}\right) , \\ 
E_{r_{n}}(t)= & \frac{1-\alpha }{B\left( \alpha \right) }G_{7}\left(
t,E_{r_{n-1}}\right) +\frac{\alpha }{B(\alpha )}^{RL}I_{0+}^{\alpha
}G_{7}\left( \theta ,E_{r_{n-1}}\right) , \\ 
I_{r_{n}}(t)= & \frac{1-\alpha }{B\left( \alpha \right) }G_{8}\left(
t,I_{r_{n-1}}\right) +\frac{\alpha }{B(\alpha )}^{RL}I_{0+}^{\alpha
}G_{8}\left( \theta ,I_{r_{n-1}}\right).
\end{array}
\label{eq21}
\end{equation}

Let the differences between consecutive terms in the Equation \eqref{eq21} 
be expressed as
\begin{eqnarray*}
\Psi _{S_{h},n}\left( t\right) &=&S_{h_{n}}\left( t\right)
-S_{h_{n-1}}\left( t\right) =\frac{1-\alpha }{B\left( \alpha \right) }\left(
G_{1}\left( t,S_{h_{n-1}}\right) -G_{1}\left( t,S_{h_{n-2}}\right) \right) \\
&&+\frac{\alpha }{B(\alpha )}^{RL}I_{0+}^{\alpha }\left\{ G_{1}\left( \theta
,S_{h_{n-1}}\right) -G_{1}\left( \theta ,S_{h_{n-2}}\right) \right\} ,
\end{eqnarray*}
\vspace{-12pt}
\begin{eqnarray*}
\Psi _{E_{h},n}\left( t\right) &=&E_{h_{n}}\left( t\right)
-E_{h_{n-1}}\left( t\right) =\frac{1-\alpha }{B\left( \alpha \right) }\left(
G_{2}\left( t,E_{h_{n-1}}\right) -G_{2}\left( t,E_{h_{n-2}}\right) \right) \\
&&+\frac{\alpha }{B(\alpha )}^{RL}I_{0+}^{\alpha }\left\{ G_{2}\left( \theta
,E_{h_{n-1}}\right) -G_{2}\left( \theta ,E_{h_{n-2}}\right) \right\} ,
\end{eqnarray*}
\vspace{-12pt}
\begin{eqnarray*}
\Psi _{I_{h},n}\left( t\right) &=&I_{h_{n}}\left( t\right)
-I_{h_{n-1}}\left( t\right) =\frac{1-\alpha }{B\left( \alpha \right) }\left(
G_{3}\left( t,I_{h_{n-1}}\right) -G_{3}\left( t,I_{h_{n-2}}\right) \right) \\
&&+\frac{\alpha }{B(\alpha )}^{RL}I_{0+}^{\alpha }\left\{ G_{3}\left( \theta
,I_{h_{n-1}}\right) -G_{3}\left( \theta ,I_{h_{n-2}}\right) \right\} ,
\end{eqnarray*}\vspace{-12pt}
\begin{eqnarray*}
\Psi _{Q_{h},n}\left( t\right) &=&Q_{h_{n}}\left( t\right)
-Q_{h_{n-1}}\left( t\right) =\frac{1-\alpha }{B\left( \alpha \right) }\left(
G_{4}\left( t,Q_{h_{n-1}}\right) -G_{4}\left( t,Q_{h_{n-2}}\right) \right) \\
&&+\frac{\alpha }{B(\alpha )}^{RL}I_{0+}^{\alpha }\left\{ G_{4}\left( \theta
,Q_{h_{n-1}}\right) -G_{4}\left( \theta ,Q_{h_{n-2}}\right) \right\} ,
\end{eqnarray*}\vspace{-12pt}
\begin{eqnarray*}
\Psi _{R_{h},n}\left( t\right) &=&R_{h_{n}}\left( t\right)
-R_{h_{n-1}}\left( t\right) =\frac{1-\alpha }{B\left( \alpha \right) }\left(
G_{5}\left( t,R_{h_{n-1}}\right) -G_{5}\left( t,R_{h_{n-2}}\right) \right) \\
&&+\frac{\alpha }{B(\alpha )}^{RL}I_{0+}^{\alpha }\left\{ G_{5}\left( \theta
,R_{h_{n-1}}\right) -G_{5}\left( \theta ,R_{h_{n-2}}\right) \right\} ,
\end{eqnarray*}\vspace{-12pt}
\begin{eqnarray*}
\Psi _{S_{r},n}\left( t\right) &=&S_{r_{n}}\left( t\right)
-S_{r_{n-1}}\left( t\right) =\frac{1-\alpha }{B\left( \alpha \right) }\left(
G_{6}\left( t,S_{r_{n-1}}\right) -G_{6}\left( t,S_{r_{n-2}}\right) \right) \\
&&+\frac{\alpha }{B(\alpha )}^{RL}I_{0+}^{\alpha }\left\{ G_{6}\left( \theta
,S_{r_{n-1}}\right) -G_{6}\left( \theta ,S_{r_{n-2}}\right) \right\} ,
\end{eqnarray*}\vspace{-12pt}
\begin{eqnarray*}
\Psi _{E_{r},n}\left( t\right) &=&E_{r_{n}}\left( t\right)
-E_{r_{n-1}}\left( t\right) =\frac{1-\alpha }{B\left( \alpha \right) }\left(
G_{7}\left( t,E_{r_{n-1}}\right) -G_{7}\left( t,E_{r_{n-2}}\right) \right) \\
&&+\frac{\alpha }{B(\alpha )}^{RL}I_{0+}^{\alpha }\left\{ G_{7}\left( \theta
,E_{r_{n-1}}\right) -G_{7}\left( \theta ,E_{r_{n-2}}\right) \right\} ,
\end{eqnarray*}\vspace{-12pt}
\begin{eqnarray}
\Psi _{I_{r},n}\left( t\right) &=&I_{r_{n}}\left( t\right)
-I_{r_{n-1}}\left( t\right) =\frac{1-\alpha }{B\left( \alpha \right) }
\left\{ G_{8}\left( t,I_{r_{n-1}}\right) -G_{8}\left( t,I_{r_{n-2}}\right)
\right\}  \label{eq22} \\
&&+\frac{\alpha }{B(\alpha )}^{RL}I_{0+}^{\alpha }\left\{ G_{8}\left( \theta
,I_{r_{n-1}}\right) -G_{8}\left( \theta ,I_{r_{n-2}}\right) \right\} . 
\notag
\end{eqnarray}
It is obvious that $S_{h_{n}}\left( t\right) =\overset{n}{\underset{i=0}{
\sum }}\Psi _{S_{h},i}\left( t\right)$, $E_{h_{n}}\left( t\right) =\overset{
n}{\underset{i=0}{\sum }}\Psi _{E_{h},i}\left( t\right) ,$ $I_{h_{n}}\left(
t\right) =\overset{n}{\underset{i=0}{\sum }}\Psi _{I_{h},i}\left( t\right)$,
$Q_{h_{n}}\left( t\right) =\overset{n}{\underset{i=0}{\sum }}\Psi
_{Q_{h},i}\left( t\right) ,$ $R_{h_{n}}\left( t\right) =\overset{n}{\underset
{i=0}{\sum }}\Psi _{R_{h},i}\left( t\right) ,$ $S_{r_{n}}\left( t\right) =
\overset{n}{\underset{i=0}{\sum }}\Psi _{S_{r},i}\left( t\right)$, 
$E_{r_{n}}\left( t\right) =\overset{n}{\underset{i=0}{\sum }}\Psi
_{E_{r},i}\left( t\right) ,$ and $I_{r_{n}}\left( t\right) =\overset{n}{
\underset{i=0}{\sum }}\Psi _{I_{r},i}\left( t\right)$. 
Implementing the norm on Equation \eqref{eq22}, we have 
\begin{equation*}
\begin{split}
\left\Vert \Psi _{S_{h},n}\left( t\right) \right\Vert 
&=\left\Vert S_{h_{n}}\left( t\right) -S_{h_{n-1}}\left( t\right) \right\Vert \\
&\leq \frac{1-\alpha }{B\left( \alpha \right) }\left\Vert \left(
G_{1}\left( t,S_{h_{n-1}}\right) -G_{1}\left( t,S_{h_{n-2}}\right) \right)
\right\Vert \\
& +\frac{\alpha }{B(\alpha )}^{RL}I_{0+}^{\alpha }\left\Vert
\left( G_{1}\left( \theta ,S_{h_{n-1}}\right)
 -G_{1}\left( \theta,S_{h_{n-2}}\right) \right) \right\Vert,
\end{split}
\end{equation*}\vspace{-12pt}
\begin{eqnarray*}
\left\Vert \Psi _{E_{h},n}\left( t\right) \right\Vert &=&\left\Vert
E_{h_{n}}\left( t\right) -E_{h_{n-1}}\left( t\right) \right\Vert \\
&\leq &\frac{1-\alpha }{B\left( \alpha \right) }\left\Vert \left(
G_{2}\left( t,E_{h_{n-1}}\right) -G_{2}\left( t,E_{h_{n-2}}\right) \right)
\right\Vert \\
&& +\frac{\alpha }{B(\alpha )}^{RL}I_{0+}^{\alpha }\left\Vert
\left( G_{2}\left( \theta ,E_{h_{n-1}}\right) -G_{2}\left( \theta
,E_{h_{n-2}}\right) \right) \right\Vert ,
\end{eqnarray*}\vspace{-12pt}
\begin{eqnarray*}
\left\Vert \Psi _{I_{h,}n}\left( t\right) \right\Vert &=&\left\Vert
I_{h_{n}}\left( t\right) -I_{h_{n-1}}\left( t\right) \right\Vert \\
&\leq &\frac{1-\alpha }{B\left( \alpha \right) }\left\Vert \left(
G_{3}\left( t,I_{h_{n-1}}\right) -G_{3}\left( t,I_{h_{n-2}}\right) \right)
\right\Vert \\
&& +\frac{\alpha }{B(\alpha )}^{RL}I_{0+}^{\alpha }\left\Vert
\left( G_{3}\left( \theta ,I_{h_{n-1}}\right) -G_{3}\left( \theta
,I_{h_{n-2}}\right) \right) \right\Vert ,
\end{eqnarray*}\vspace{-12pt}
\begin{eqnarray*}
\left\Vert \Psi _{Q_{h,}n}\left( t\right) \right\Vert &=&\left\Vert
Q_{h_{n}}\left( t\right) -Q_{h_{n-1}}\left( t\right) \right\Vert \\
&\leq &\frac{1-\alpha }{B\left( \alpha \right) }\left\Vert \left(
G_{4}\left( t,Q_{h_{n-1}}\right) -G_{4}\left( t,Q_{h_{n-2}}\right) \right)
\right\Vert \\
&& +\frac{\alpha }{B(\alpha )}^{RL}I_{0+}^{\alpha }\left\Vert
\left( G_{4}\left( \theta ,Q_{h_{n-1}}\right) -G_{4}\left( \theta
,Q_{h_{n-2}}\right) \right) \right\Vert ,
\end{eqnarray*}\vspace{-12pt}
\begin{eqnarray*}
\left\Vert \Psi _{R_{h},n}\left( t\right) \right\Vert &=&\left\Vert
R_{h_{n}}\left( t\right) -R_{h_{n-1}}\left( t\right) \right\Vert \\
&\leq &\frac{1-\alpha }{B\left( \alpha \right) }\left\Vert \left(
G_{5}\left( t,R_{h_{n-1}}\right) -G_{5}\left( t,R_{h_{n-2}}\right) \right)
\right\Vert \\
&&+\frac{\alpha }{B(\alpha )}^{RL}I_{0+}^{\alpha }\left\Vert
\left( G_{5}\left( \theta ,R_{h_{n-1}}\right) -G_{5}\left( \theta
,R_{h_{n-2}}\right) \right) \right\Vert ,
\end{eqnarray*}\vspace{-12pt}
\begin{eqnarray*}
\left\Vert \Psi _{S_{r,}n}\left( t\right) \right\Vert &=&\left\Vert
S_{r_{n}}\left( t\right) -S_{r_{n-1}}\left( t\right) \right\Vert \\
&\leq &\frac{1-\alpha }{B\left( \alpha \right) }\left\Vert \left(
G_{6}\left( t,S_{r_{n-1}}\right) -G_{6}\left( t,S_{r_{n-2}}\right) \right)
\right\Vert \\
&& +\frac{\alpha }{B(\alpha )}^{RL}I_{0+}^{\alpha }\left\Vert
\left( G_{6}\left( \theta ,S_{r_{n-1}}\right) -G_{6}\left( \theta
,S_{r_{n-2}}\right) \right) \right\Vert ,
\end{eqnarray*}\vspace{-12pt}
\begin{eqnarray*}
\left\Vert \Psi _{E_{r},n}\left( t\right) \right\Vert &=&\left\Vert
E_{r_{n}}\left( t\right) -E_{r_{n-1}}\left( t\right) \right\Vert \\
&\leq &\frac{1-\alpha }{B\left( \alpha \right) }\left\Vert \left(
G_{7}\left( t,E_{r_{n-1}}\right) -G_{7}\left( t,E_{r_{n-2}}\right) \right)
\right\Vert \\
&& +\frac{\alpha }{B(\alpha )}^{RL}I_{0+}^{\alpha }\left\Vert
\left( G_{7}\left( \theta ,E_{r_{n-1}}\right) -G_{7}\left( \theta
,E_{r_{n-2}}\right) \right) \right\Vert ,
\end{eqnarray*}\vspace{-12pt}
\begin{eqnarray}
\left\Vert \Psi _{I_{r,}n}\left( t\right) \right\Vert &=&\left\Vert
I_{r_{n}}\left( t\right) -I_{r_{n-1}}\left( t\right) \right\Vert
\label{eq23} \\
&\leq &\frac{1-\alpha }{B\left( \alpha \right) }\left\Vert \left(
G_{8}\left( t,I_{r_{n-1}}\right) -G_{8}\left( t,I_{r_{n-2}}\right) \right)
\right\Vert  \notag \\
&& +\frac{\alpha }{B(\alpha )}^{RL}I_{0+}^{\alpha }\left\Vert
\left( G_{8}\left( \theta ,I_{r_{n-1}}\right) -G_{8}\left( \theta
,I_{r_{n-2}}\right) \right) \right\Vert .  \notag
\end{eqnarray}
Since Lipschitz conditions are satisfied by kernels, 
Equation \eqref{eq23} can be written as
\begin{eqnarray}
\left\Vert \Psi _{S_{h},n}\left( t\right) \right\Vert &=&\left\Vert
S_{h_{n}}\left( t\right) -S_{h_{n-1}}\left( t\right) \right\Vert
\label{eq23a} \\
&\leq &\frac{1-\alpha }{B\left( \alpha \right) }L_{1}\left\Vert
S_{h_{n-1}}-S_{h_{n-2}}\right\Vert +\frac{\alpha }{B(\alpha )}L_{1}\text{ }
^{RL}I_{0+}^{\alpha }\left\Vert S_{h_{n-1}}-S_{h_{n-2}}\right\Vert .  \notag
\end{eqnarray}
Similarly, we get
\begin{equation}
\begin{array}{c}
\left\Vert \Psi _{E_{h},n}\left( t\right) \right\Vert \leq \frac{1-\alpha }{
B\left( \alpha \right) }L_{2}\left\Vert E_{h_{n-1}}-E_{h_{n-2}}\right\Vert +
\frac{\alpha }{B(\alpha )}L_{2}\text{ }^{RL}I_{0+}^{\alpha }\left\Vert
E_{h_{n-1}}-E_{h_{n-2}}\right\Vert , \\ 
\left\Vert \Psi _{I_{h},n}\left( t\right) \right\Vert \leq \frac{1-\alpha }{
B\left( \alpha \right) }L_{3}\left\Vert I_{h_{n-1}}-I_{h_{n-2}}\right\Vert +
\frac{\alpha }{B(\alpha )}L_{3}\text{ }^{RL}I_{0+}^{\alpha }\left\Vert
I_{h_{n-1}}-I_{h_{n-2}}\right\Vert , \\ 
\left\Vert \Psi _{Q_{h},n}\left( t\right) \right\Vert \leq \frac{1-\alpha }{
B\left( \alpha \right) }L_{4}\left\Vert Q_{h_{n-1}}-Q_{h_{n-2}}\right\Vert +
\frac{\alpha }{B(\alpha )}L_{4}\text{ }^{RL}I_{0+}^{\alpha }\left\Vert
Q_{h_{n-1}}-Q_{h_{n-2}}\right\Vert , \\ 
\left\Vert \Psi _{R_{h},n}\left( t\right) \right\Vert \leq \frac{1-\alpha }{
B\left( \alpha \right) }L_{5}\left\Vert R_{h_{n-1}}-R_{h_{n-2}}\right\Vert +
\frac{\alpha }{B(\alpha )}L_{5}\text{ }^{RL}I_{0+}^{\alpha }\left\Vert
R_{h_{n-1}}-R_{h_{n-2}}\right\Vert , \\ 
\left\Vert \Psi _{S_{r},n}\left( t\right) \right\Vert \leq \frac{1-\alpha }{
B\left( \alpha \right) }L_{6}\left\Vert S_{r_{n-1}}-S_{r_{n-2}}\right\Vert +
\frac{\alpha }{B(\alpha )}L_{6}\text{ }^{RL}I_{0+}^{\alpha }\left\Vert
S_{r_{n-1}}-S_{r_{n-2}}\right\Vert ,, \\ 
\left\Vert \Psi _{E_{r},n}\left( t\right) \right\Vert \leq \frac{1-\alpha }{
B\left( \alpha \right) }L_{7}\left\Vert E_{r_{n-1}}-E_{r_{n-2}}\right\Vert +
\frac{\alpha }{B(\alpha )}L_{7}\text{ }^{RL}I_{0+}^{\alpha }\left\Vert
E_{r_{n-1}}-E_{r_{n-2}}\right\Vert , \\ 
\left\Vert \Psi _{I_{r},n}\left( t\right) \right\Vert \leq \frac{1-\alpha }{
B\left( \alpha \right) }L_{8}\left\Vert I_{r_{n-1}}-I_{r_{n-2}}\right\Vert +
\frac{\alpha }{B(\alpha )}L_{8}\text{ }^{RL}I_{0+}^{\alpha }\left\Vert
I_{r_{n-1}}-I_{r_{n-2}}\right\Vert.
\end{array}
\label{eq23b}
\end{equation}

\begin{Theorem}
The model $\left( \ref{eq11}\right) $ has a solution if $M_{0}$ that
satisfies the inequality
\begin{equation}
\left( \frac{1-\alpha }{B\left( \alpha \right) }+\frac{M_{0}^{\alpha }}{
B(\alpha )\Gamma \left( \alpha \right) }\right) L_{i}<1,\text{ }i=1,2,\ldots,8,
\label{eq26}
\end{equation}
can be found.
\end{Theorem}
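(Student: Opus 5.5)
We prove the statement with the Picard iteration \eqref{eq21} and the difference estimates \eqref{eq23a}--\eqref{eq23b}: the successive differences decay geometrically on $D=[0,M_0]$, so the partial sums converge uniformly to a fixed point of the integral system \eqref{eq20}.

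\textbf{Step 1: bound the Riemann--Liouville term.} For any continuous $g$ on $[0,M_0]$,
\[
{}^{RL}I_{0+}^{\alpha}\|g\| \le \frac{\|g\|}{\Gamma(\alpha)}\int_0^t (t-\theta)^{\alpha-1}\,d\theta = \frac{t^{\alpha}}{\alpha\Gamma(\alpha)}\|g\| \le \frac{M_0^{\alpha}}{\alpha\Gamma(\alpha)}\|g\|.
\]
Multiplying by the prefactor $\frac{\alpha}{B(\alpha)}$ cancels the $\alpha$. Substituting into \eqref{eq23a}--\eqref{eq23b} and taking the supremum over $t\in D$ gives
\[
\|\Psi_{X,n}\| \le \left(\frac{1-\alpha}{B(\alpha)}+\frac{M_0^{\alpha}}{B(\alpha)\Gamma(\alpha)}\right)L_i\,\|\Psi_{X,n-1}\| =: \rho_i\,\|\Psi_{X,n-1}\|,
\]
for each compartment $X$ with its constant $L_i$, taken from the preceding Lipschitz theorem.

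\textbf{Step 2: geometric decay.} Iterating the bound yields $\|\Psi_{X,n}\|\le \rho_i^{\,n}\|X(0)\|$, or $\rho_i^{\,n-1}\|\Psi_{X,1}\|$. Hypothesis \eqref{eq26} states exactly that $\rho_i<1$ for all $i$. Hence each series $\sum_i \Psi_{X,i}$ converges absolutely and uniformly on $D$ by the Weierstrass M-test, and the partial sums $S_{h_n},\dots,I_{r_n}$ form Cauchy sequences in the Banach space $F$. Call their limits $S_h,\dots,I_r$.

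\textbf{Step 3: the limit solves the system.} For the first compartment write $S_h(t)-S_{h_n}(t)=K_n(t)$ and insert this into \eqref{eq20}. The Lipschitz bounds give
\[
\left\|S_h-S_h(0)-\tfrac{1-\alpha}{B(\alpha)}G_1(t,S_h)-\tfrac{\alpha}{B(\alpha)}{}^{RL}I^{\alpha}_{0+}G_1(\theta,S_h)\right\| \le \|K_n\|\,(1+\rho_1) \to 0 .
\]
The same argument applies to the other seven equations. So the limit satisfies the integral form \eqref{eq20}, which is equivalent to \eqref{eq11}--\eqref{eq12} via the AB integral. In the same spirit, uniqueness would follow by subtracting two solutions and showing $\|X-X_1\|(1-\rho_i)\le 0$, though that part is not required here.

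\textbf{Main obstacle.} The hardest point is that the constants $L_i$ from the preceding theorem depend on the a priori bounds $m_j$ and on $N_h(t)$. The estimate is therefore only legitimate if the iterates stay inside the bounded region. I would handle this by invoking the invariant-region theorem, which bounds $N_h$ and $N_r$, and by treating the other compartments as frozen within each kernel $G_i$, as the paper does. This makes each $G_i$ Lipschitz in its own variable with a constant that is uniform on $\Omega$.
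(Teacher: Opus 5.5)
Your argument follows the paper's proof step for step: the Picard iteration \eqref{eq21}, the geometric bounds \eqref{eq27} from \eqref{eq23a}--\eqref{eq23b}, and a remainder that tends to zero (the paper's $\varphi_{1n}$, your $K_n$). Your evaluation of the Riemann--Liouville factor and your explicit Cauchy/M-test step are, if anything, cleaner than the paper's. But the fix you propose does not remove the obstacle you single out. This is a genuine gap, and the paper's proof has it too.

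If all eight components are iterated together, as they must be to solve the coupled system, then $G_1(t,X_{n-1})-G_1(t,X_{n-2})$ also contains the increments of $I_h$, $I_r$, $Q_h$ and, through $N_h$, of $E_h$ and $R_h$. So the one-variable estimate \eqref{eq23a} with constant $L_1$ is not available. If you instead freeze the other components at fixed functions, the fixed point solves a decoupled scalar equation. It solves \eqref{eq11} only if the frozen functions were already the solution, which is circular.

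The contraction argument needs a Lipschitz bound for the whole vector field, $\|G(t,X)-G(t,Y)\|\le L\|X-Y\|$ in the norm of $F$. Here $L$ must collect the cross-couplings: $\varphi^{\alpha}$ from $Q_h$ in $G_1$, the $I_r$-dependence of $G_1$ and $G_2$, $\alpha_1^{\alpha}$ from $E_h$ in $G_3$, $\alpha_3^{\alpha}$ from $E_r$ in $G_8$, and so on. Then $\|X_n-X_{n-1}\|\le\kappa L\|X_{n-1}-X_{n-2}\|$ with $\kappa=\frac{1-\alpha}{B(\alpha)}+\frac{M_0^{\alpha}}{B(\alpha)\Gamma(\alpha)}$, and the hypothesis must be $\kappa L<1$. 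The diagonal condition \eqref{eq26} does not imply this. For instance, the $E_h$--$I_h$ loop ($I_h$ enters $G_2$, $\alpha_1^{\alpha}E_h$ enters $G_3$) is invisible to $L_2$ and $L_3$. So, as written, the argument does not establish the theorem under its stated hypothesis.

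Two further steps in your final paragraph also fail as stated.

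First, the bounds $m_j$ and the invariant-region bounds are properties of solutions of \eqref{eq11}, not of the Picard iterates. The iterates need not stay nonnegative, bounded, or away from $N_h=0$. Moreover, the invariant region bounds $N_h$ only from above, while $L_1$ contains $1/N_h$ and needs a lower bound. The usual repair is to show that, for $M_0$ small, the Picard map sends a closed set such as $\{X\in F:\ \|X-X_0\|\le r,\ N_h\ge c>0\}$ into itself, with $X_0$ the initial vector \eqref{eq12}. You then compute $L$ on that set. Alternatively, truncate $G$ outside a bounded region and check a posteriori that the solution stays inside.

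Second, the equivalence of \eqref{eq20} with \eqref{eq11}--\eqref{eq12} that you invoke needs a compatibility condition for the ABC derivative. Since ${}^{ABC}_{0}D^{\alpha}_{t}\,{}^{AB}I^{\alpha}_{0+}g=g-g(0)E_{\alpha}\bigl(-\frac{\alpha}{1-\alpha}t^{\alpha}\bigr)$, a fixed point of the Picard map satisfies $X(0)=X_0+\frac{1-\alpha}{B(\alpha)}G(0,X(0))$ and ${}^{ABC}_{0}D^{\alpha}_{t}X=G(t,X)-G(0,X(0))E_{\alpha}\bigl(-\frac{\alpha}{1-\alpha}t^{\alpha}\bigr)$. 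It therefore solves \eqref{eq11}--\eqref{eq12} only when $G(0,X_0)=0$. The paper's proof also passes over this point.
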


\begin{proof}
We have shown that functions $S_{h}\left( t\right) ,$ $E_{h}\left( t\right)$, 
$I_{h}\left( t\right) ,$ $Q_{h}\left( t\right) ,$ $R_{h}\left( t\right)$, 
$S_{r}\left( t\right) ,$ $E_{r}\left( t\right) ,$\ and $I_{r}\left( t\right) $
are bounded, and~their kernels satisfy the Lipschitz condition.
Applying \mbox{Equations $\left( \ref{eq23a}\right) $ and $\left( \ref{eq23b}\right) $}
along with a recursive method, we attain
\begin{equation}
\begin{array}{c}
\left\Vert \Psi _{S_{h},n}\left( t\right) \right\Vert \leq \left\Vert
S_{h}\left( 0\right) \right\Vert \left[ \left( \frac{1-\alpha }{B\left(
\alpha \right) }+\frac{M_{0}^{\alpha }}{B(\alpha )\Gamma \left( \alpha
\right) }\right) L_{1}\right] ^{n}, \\ 
\left\Vert \Psi _{E_{h},n}\left( t\right) \right\Vert \leq \left\Vert
E_{h}\left( 0\right) \right\Vert \left[ \left( \frac{1-\alpha }{B\left(
\alpha \right) }+\frac{M_{0}^{\alpha }}{B(\alpha )\Gamma \left( \alpha
\right) }\right) L_{2}\right] ^{n}, \\ 
\left\Vert \Psi _{I_{h},n}\left( t\right) \right\Vert \leq \left\Vert
I_{h}\left( 0\right) \right\Vert \left[ \left( \frac{1-\alpha }{B\left(
\alpha \right) }+\frac{M_{0}^{\alpha }}{B(\alpha )\Gamma \left( \alpha
\right) }\right) L_{3}\right] ^{n}, \\ 
\left\Vert \Psi _{Q_{h},n}\left( t\right) \right\Vert \leq \left\Vert
Q_{h}\left( 0\right) \right\Vert \left[ \left( \frac{1-\alpha }{B\left(
\alpha \right) }+\frac{M_{0}^{\alpha }}{B(\alpha )\Gamma \left( \alpha
\right) }\right) L_{4}\right] ^{n}, \\ 
\left\Vert \Psi _{R_{h},n}\left( t\right) \right\Vert \leq \left\Vert
R_{h}\left( 0\right) \right\Vert \left[ \left( \frac{1-\alpha }{B\left(
\alpha \right) }+\frac{M_{0}^{\alpha }}{B(\alpha )\Gamma \left( \alpha
\right) }\right) L_{5}\right] ^{n}, \\ 
\left\Vert \Psi _{S_{r},n}\left( t\right) \right\Vert \leq \left\Vert
S_{r}\left( 0\right) \right\Vert \left[ \left( \frac{1-\alpha }{B\left(
\alpha \right) }+\frac{M_{0}^{\alpha }}{B(\alpha )\Gamma \left( \alpha
\right) }\right) L_{6}\right] ^{n}, \\ 
\left\Vert \Psi _{E_{r},n}\left( t\right) \right\Vert \leq \left\Vert
E_{r}\left( 0\right) \right\Vert \left[ \left( \frac{1-\alpha }{B\left(
\alpha \right) }+\frac{M_{0}^{\alpha }}{B(\alpha )\Gamma \left( \alpha
\right) }\right) L_{7}\right] ^{n}, \\ 
\left\Vert \Psi _{I_{r},n}\left( t\right) \right\Vert \leq \left\Vert
I_{r}\left( 0\right) \right\Vert \left[ \left( \frac{1-\alpha }{B\left(
\alpha \right) }+\frac{M_{0}^{\alpha }}{B(\alpha )\Gamma \left( \alpha
\right) }\right) L_{8}\right] ^{n}.
\end{array}
\label{eq27}
\end{equation}

We show continuous solutions exist for~the model $\left( 
\ref{eq11}\right) $. To~indicate that the functions $S_{h}\left( t\right)$, 
$E_{h}\left( t\right) ,$ $I_{h}\left( t\right) ,$ $Q_{h}\left( t\right)$, 
$R_{h}\left( t\right) ,$ $S_{r}\left( t\right) ,$ $E_{r}\left( t\right) ,$\
and $I_{r}\left( t\right) $ are solutions of the model \eqref{eq11}, we suppose that
\begin{eqnarray}
S_{h}(t)-S_{h}(0) &=&S_{h,n}\left( t\right) -\varphi _{1n}\left( t\right) , 
\notag \\
E_{h}(t)-E_{h}(0) &=&E_{h,n}\left( t\right) -\varphi _{2n}\left( t\right) , 
\notag \\
I_{h}(t)-I_{h}(0) &=&I_{h,n}\left( t\right) -\varphi _{3n}\left( t\right) , 
\notag \\
Q_{h}(t)-Q_{h}(0) &=&Q_{h,n}\left( t\right) -\varphi _{4n}\left( t\right) ,
\\
R_{h}(t)-R_{h}(0) &=&R_{h,n}\left( t\right) -\varphi _{5n}\left( t\right) , 
\notag \\
S_{r}(t)-S_{r}(0) &=&S_{r,n}\left( t\right) -\varphi _{6n}\left( t\right) , 
\notag \\
E_{r}(t)-E_{r}(0) &=&E_{r,n}\left( t\right) -\varphi _{7n}\left( t\right) , 
\notag \\
I_{r}(t)-I_{r}(0) &=&I_{r,n}\left( t\right) -\varphi _{8n}\left( t\right).
\notag
\end{eqnarray}
Therefore, we achieve
\begin{eqnarray*}
\left\Vert \varphi _{1n}\left( t\right) \right\Vert &=&\left\Vert \frac{
1-\alpha }{B\left( \alpha \right) }\left( G_{1}\left( t,S_{h}\right)
-G_{1}\left( t,S_{h_{n-1}}\right) \right) +\frac{\alpha }{B(\alpha )}
^{RL}I_{0+}^{\alpha }\left\{ G_{1}\left( \theta ,S_{h}\right) -G_{1}\left(
\theta ,S_{h_{n-1}}\right) \right\} \right\Vert \\
&\leq &\frac{1-\alpha }{B\left( \alpha \right) }L_{1}\left\Vert
S_{h}-S_{h_{n-1}}\right\Vert +\frac{\alpha }{B(\alpha )}L_{1}\text{ }
^{RL}I_{0+}^{\alpha }\left\Vert S_{h}-S_{h_{n-1}}\right\Vert  \notag \\
&\leq &\frac{1-\alpha }{B\left( \alpha \right) }L_{1}\left\Vert
S_{h}-S_{h_{n-1}}\right\Vert +\frac{t^{\alpha }}{B(\alpha )\Gamma \left(
\alpha \right) }L_{1}\text{ }\left\Vert S_{h}-S_{h_{n-1}}\right\Vert . 
\notag
\end{eqnarray*}

On employing this process recursively, we obtain at $t=M_{0}$ that
\begin{equation*}
\left\Vert \varphi _{1n}\left( t\right) \right\Vert \leq \left( \frac{
1-\alpha }{B\left( \alpha \right) }+\frac{M_{0}^{\alpha }}{B(\alpha )\Gamma
\left( \alpha \right) }\right) ^{n+1}L_{1}^{n+1}M_{1}.
\end{equation*}
Taking the limit $n\rightarrow \infty $, we obtain $\left\Vert \varphi
_{1n}\left( t\right) \right\Vert \rightarrow 0$. Hence, the~proof is
complete. Similarly, we can prove that $\left\Vert \varphi _{2n}\left(
t\right) \right\Vert \rightarrow 0,$ $\left\Vert \varphi _{3n}\left(
t\right) \right\Vert \rightarrow 0,$ $\left\Vert \varphi _{4n}\left(
t\right) \right\Vert \rightarrow 0,$ $\left\Vert \varphi _{5n}\left(
t\right) \right\Vert \rightarrow 0,$ $\left\Vert \varphi _{6n}\left(
t\right) \right\Vert \rightarrow 0,$ $\left\Vert \varphi _{7n}\left(
t\right) \right\Vert \rightarrow 0$ and $\left\Vert \varphi _{1n}\left(
t\right) \right\Vert \rightarrow 0.$
\end{proof}

The existence of solutions are guaranteed by Banach fixed point theorem. Now,\linebreak  
Theorem~\ref{thm:5} will be given for the uniqueness of the~solution.

\begin{Theorem}
\label{thm:5}
Under the condition that
\begin{equation*}
\left( \frac{1-\alpha }{B\left( \alpha \right) }+\frac{t^{\alpha }}{B(\alpha
)\Gamma \left( \alpha \right) }\right) L_{i}<1,\text{ }i=1,2,\ldots,8,
\end{equation*}
the model \eqref{eq11} has a unique solution, 
\end{Theorem}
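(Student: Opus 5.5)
The uniqueness argument will be by contradiction, using the Lipschitz estimates for the kernels $G_1,\dots,G_8$ established earlier. Suppose that, besides the solution $(S_h,E_h,I_h,Q_h,R_h,S_r,E_r,I_r)$ obtained in the previous theorem, the model \eqref{eq11} with the same initial data \eqref{eq12} has a second solution $(S_{h}^{*},E_{h}^{*},I_{h}^{*},Q_{h}^{*},R_{h}^{*},S_{r}^{*},E_{r}^{*},I_{r}^{*})$ in $F$. Both satisfy the integral representation \eqref{eq20}. Subtracting the two representations componentwise removes the initial values. For the first component this gives
\begin{equation*}
S_h(t)-S_h^{*}(t)=\frac{1-\alpha}{B(\alpha)}\bigl(G_1(t,S_h)-G_1(t,S_h^{*})\bigr)+\frac{\alpha}{B(\alpha)}\,{}^{RL}I_{0+}^{\alpha}\bigl(G_1(\theta,S_h)-G_1(\theta,S_h^{*})\bigr),
\end{equation*}
and the other seven components are handled in the same way.

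Next I take norms and apply the Lipschitz bound $\|G_1(t,S_h)-G_1(t,S_h^{*})\|\le L_1\|S_h-S_h^{*}\|$ from the contraction theorem. For the Riemann--Liouville part I bound the integrand by its supremum and use
\begin{equation*}
{}^{RL}I_{0+}^{\alpha}1=\frac{t^{\alpha}}{\Gamma(\alpha+1)}=\frac{t^{\alpha}}{\alpha\Gamma(\alpha)} .
\end{equation*}
The factor $\alpha$ then cancels against $\alpha/B(\alpha)$. This yields
\begin{equation*}
\|S_h-S_h^{*}\|\le\left(\frac{1-\alpha}{B(\alpha)}+\frac{t^{\alpha}}{B(\alpha)\Gamma(\alpha)}\right)L_1\,\|S_h-S_h^{*}\|,
\end{equation*}
which is exactly the constant appearing in the hypothesis. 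Analogous inequalities hold for the other components with $L_2,\dots,L_8$.

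Rearranging gives
\begin{equation*}
\left(1-\left(\frac{1-\alpha}{B(\alpha)}+\frac{t^{\alpha}}{B(\alpha)\Gamma(\alpha)}\right)L_1\right)\|S_h-S_h^{*}\|\le 0 .
\end{equation*}
By hypothesis the bracket is strictly positive, so $\|S_h-S_h^{*}\|=0$. The same holds for every component, so the two solutions coincide and the solution is unique.

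The main obstacle is that the $L_i$ are only "Lipschitz" with respect to their own variable, with the other state variables frozen and bounded by $m_j$ (and $N_h$ treated as given). In a genuinely coupled system, the difference $G_1(t,S_h,\dots)-G_1(t,S_h^{*},\dots)$ also involves the differences $I_h-I_h^{*}$, $I_r-I_r^{*}$, $Q_h-Q_h^{*}$. I would therefore follow the paper's convention and treat each kernel as a function of its own variable, as in the contraction theorem. If more rigor is wanted, the fallback is to work with the summed norm on $F$ and use a single aggregated Lipschitz constant $L=\max_i L_i$, enlarged to absorb the cross terms (using positivity and the bounds $N_h\ge$ some positive lower bound on $[0,T]$). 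The same rearrangement then goes through under the analogous smallness condition.
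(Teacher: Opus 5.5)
Your argument is the paper's proof: subtract the two integral representations \eqref{eq20}, apply the kernel Lipschitz bound together with ${}^{RL}I_{0+}^{\alpha}1=t^{\alpha}/\Gamma(\alpha+1)$, use the strict positivity of $1-L_1\bigl(\frac{1-\alpha}{B(\alpha)}+\frac{t^{\alpha}}{B(\alpha)\Gamma(\alpha)}\bigr)$ to force $\|S_h-S_h^{*}\|=0$, and repeat componentwise. The caveat you raise is a genuine weakness, but the paper's argument shares it rather than avoiding it: each $L_i$ is a Lipschitz constant only in its own variable, with the other states and $N_h$ frozen, and for the sup norm on $[0,T]$ the factor $t^{\alpha}$ should really be evaluated at $t=T$.
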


\begin{proof}
Suppose $S_{h1}\left( t\right) ,$ $E_{h1}\left( t\right) ,$ $I_{h1}\left(
t\right) ,$ $Q_{h1}\left( t\right) ,$ $R_{h1}\left( t\right)$, 
$S_{r1}\left( t\right) ,$ $E_{r1}\left( t\right) ,$ $I_{r1}\left( t\right) $
are also solutions of $\left( \ref{eq11}\right) $. Then, 
\begin{equation*}
S_{h}(t)-S_{h1}(t)=\frac{1-\alpha }{B\left( \alpha \right) }\left(
G_{1}\left( t,S_{h}\right) -G_{1}\left( t,S_{h1}\right) \right) +\frac{
\alpha }{B(\alpha )}^{RL}I_{0+}^{\alpha }\left( G_{1}\left( \theta
,S_{h}\right) -G_{1}\left( \theta ,S_{h1}\right) \right) .
\end{equation*}
Taking the norm of both sides, we get
\begin{equation*}
\left\Vert S_{h}(t)-S_{h1}(t)\right\Vert \leq \frac{1-\alpha }{B\left(
\alpha \right) }L_{1}\left\Vert S_{h}-S_{h1}\right\Vert +\frac{t^{\alpha }}{
B(\alpha )\Gamma \left( \alpha \right) }L_{1}\left\Vert
S_{h}-S_{h1}\right\Vert .
\end{equation*}
Since $\left( 1-L_{1}\left( \frac{1-\alpha }{B\left( \alpha \right) }+\frac{
t^{\alpha }}{B(\alpha )\Gamma \left( \alpha \right) }\right) \right) >0,$ we
obtain $\left\Vert S_{h}(t)-S_{h1}(t)\right\Vert =0.$ As a result, we get 
$S_{h}(t)=S_{h1}(t).$ Similarly, it is seen that $E_{h}(t)=E_{h1}(t)$, 
$I_{h}(t)=I_{h1}(t),$ $Q_{h}(t)=Q_{h1}(t),$ $R_{h}(t)=R_{h1}(t)$, 
$S_{r}(t)=S_{r1}(t),$ $E_{r}(t)=E_{r1}(t),$ $I_{r}(t)=I_{r1}(t)$ 
and the proof is complete.
\end{proof}


\section{Stability~Analysis}
\label{sec:5}

{Since system~\eqref{eq11} is formulated with 
the Atangana--Baleanu--Caputo fractional derivative $(0<\alpha<1)$,
the stability notion used in this section is \emph{local asymptotic 
stability for fractional-order systems},
investigated via Jacobian linearization 
together with the fractional eigenvalue condition
$|\arg(\lambda_i)|>\alpha\pi/2$.}

\subsection{Equilibrium~Points}

The equilibrium points of the fractional monkeypox model \eqref{eq11}
are found by equating the right-hand side of the system to zero.
The disease-free equilibrium point refers to the absence of disease for the
human and rodent populations in the system. That is, for~$I_{h}=0,I_{r}=0,$
the equilibrium point of the system is calculated as
\begin{equation}
E_{0}=\left( \frac{\theta _{h}^{\alpha }}{\mu _{h}^{\alpha }},0,0,0,0,\frac{
\theta _{r}^{\alpha }}{\mu _{r}^{\alpha }},0,0\right).  \label{eq28}
\end{equation}

In cases where virus spread is seen in populations (i.e., $I_{h}\neq
0,I_{r}\neq 0$), the~endemic equilibrium point is
\begin{equation*}
E_{\ast }=\left( S_{h\ast },E_{h\ast },I_{h\ast },Q_{h\ast },R_{h\ast
},S_{r\ast },E_{r\ast },I_{r\ast }\right)
\end{equation*}
{such that}
\begin{equation}
\left. 
\begin{array}{l}
S_{h\ast }=\frac{k_{1}k_{2}k_{3}\theta _{h}^{\alpha }}{k_{1}k_{2}k_{3}\left(
\phi _{h}+\mu _{h}^{\alpha }+u_{1}\right) -\varphi ^{\alpha }\phi _{h}\left(
\alpha _{2}^{\alpha }k_{2}+\alpha _{1}^{\alpha }u_{3}\right) }, \\ 
E_{h\ast }=\frac{k_{2}k_{3}\phi _{h}\theta _{h}^{\alpha }}{
k_{1}k_{2}k_{3}\left( \phi _{h}+\mu _{h}^{\alpha }+u_{1}\right) -\varphi
^{\alpha }\phi _{h}\left( \alpha _{2}^{\alpha }k_{2}+\alpha _{1}^{\alpha
}u_{3}\right) }, \\ 
I_{h\ast }=\frac{k_{3}\phi _{h}\theta _{h}^{\alpha }\alpha _{1}^{\alpha }}{
k_{1}k_{2}k_{3}\left( \phi _{h}+\mu _{h}^{\alpha }+u_{1}\right) -\varphi
^{\alpha }\phi _{h}\left( \alpha _{2}^{\alpha }k_{2}+\alpha _{1}^{\alpha
}u_{3}\right) }, \\ 
Q_{h\ast }=\frac{\phi _{h}\theta _{h}^{\alpha }\left( \alpha _{2}^{\alpha
}k_{2}+\alpha _{1}^{\alpha }u_{3}\right) }{k_{1}k_{2}k_{3}\left( \phi
_{h}+\mu _{h}^{\alpha }+u_{1}\right) -\varphi ^{\alpha }\phi _{h}\left(
\alpha _{2}^{\alpha }k_{2}+\alpha _{1}^{\alpha }u_{3}\right) }, \\ 
R_{h\ast }=\frac{\theta _{h}^{\alpha }\left( \left( \gamma ^{\alpha
}+u_{2}\right) k_{3}\phi _{h}\alpha _{1}^{\alpha }+k_{1}k_{2}k_{3}u_{1}+\tau
^{\alpha }\phi _{h}\left( \alpha _{2}^{\alpha }k_{2}+\alpha _{1}^{\alpha
}u_{3}\right) \right) }{\mu _{h}^{\alpha }\left( k_{1}k_{2}k_{3}\left( \phi
_{h}+\mu _{h}^{\alpha }+u_{1}\right) -\varphi ^{\alpha }\phi _{h}\left(
\alpha _{2}^{\alpha }k_{2}+\alpha _{1}^{\alpha }u_{3}\right) \right) }, \\ 
S_{r\ast }=\frac{\theta _{r}^{\alpha }}{\mu _{r}^{\alpha }+\phi _{r}}, \\ 
E_{r\ast }=\frac{\theta _{r}^{\alpha }}{k_{4}\left( \mu _{r}^{\alpha }+\phi
_{r}\right) }, \\ 
I_{r\ast }=\frac{\phi _{r}\alpha _{3}^{\alpha }\theta _{r}^{\alpha }}{
k_{4}k_{5}\left( \mu _{r}^{\alpha }+\phi _{r}\right) },
\end{array}
\right\}   \label{eq29}
\end{equation}
where $k_{1}=\alpha _{1}^{\alpha }+\alpha _{2}^{\alpha }+\mu _{h}^{\alpha },$
$k_{2}=\mu _{h}^{\alpha }+\delta _{h}^{\alpha }+\gamma ^{\alpha
}+u_{2}+u_{3},$ $k_{3}=\varphi ^{\alpha }+\tau ^{\alpha }+\mu _{h}^{\alpha
}+\delta _{h}^{\alpha },$ $k_{4}=\mu _{r}^{\alpha }+\alpha _{3}^{\alpha }$, 
$k_{5}=\mu _{r}^{\alpha }+\delta _{r}^{\alpha },$ $\phi _{h}
=\frac{\beta_{1}^{\alpha }I_{r_{\ast }}+\beta _{2}^{\alpha }I_{h_{\ast }}}{N_{h}}$, 
and $\phi _{r}=\frac{\beta _{3}^{\alpha }I_{r_{\ast }}}{N_{r}}.$

\subsection{Basic Reproduction~Number}

The basic reproduction number is the number of expected secondary cases
directly generated by an infected individual in a population in which
individuals are susceptible to the disease. The~$R_{0}$ is a threshold value
used to determine whether the disease will become an epidemic. If~$R_{0}<1$,
monkeypox does not spread in the population, if~$R_{0}>1$, the~disease
spreads to the population and, as a result, some intervention is required to
control the~epidemic.

In~\cite{Peter1}, the~$R_{0}$ is calculated for the integer order
uncontrolled system. For~the controlled system discussed in this article,
the $R_{0}$ should also be recalculated, since the control parameters have
an effect on reducing the transmission of the disease. For~this, new
generation matrix technique~\cite{Diekmann,VanDriessche} is performed: $F$
is the non-negative matrix denoting the rate of occurrence of new infections
in the compartments, $V$ is the non-singular matrix denoting the
transmission rates between compartments, as~follows:
{\begin{equation*}
F=\left[ 
\begin{array}{c}
0 \\ 
\frac{(\beta _{1}^{\alpha }I_{r}\left( t\right) +\beta _{2}^{\alpha
}I_{h}\left( t\right) )S_{h}\left( t\right) }{N_{h}\left( t\right) } \\ 
0 \\ 
0 \\ 
0 \\ 
0 \\ 
0 \\ 
0
\end{array}
\right]
\end{equation*}
and
\begin{equation*}
V=\left[ 
\begin{array}{c}
-\theta _{h}^{\alpha }+\frac{(\beta _{1}^{\alpha }I_{r}\left( t\right)
+\beta _{2}^{\alpha }I_{h}\left( t\right) )S_{h}\left( t\right) }{
N_{h}\left( t\right) }+\mu _{h}^{\alpha }S_{h}\left( t\right) -\varphi
^{\alpha }Q_{h}\left( t\right) +u_{1}\left( t\right) S_{h}\left( t\right) 
\\ 
(\alpha _{1}^{\alpha }+\alpha _{2}^{\alpha }+\mu _{h}^{\alpha })E_{h}\left(
t\right)  \\ 
-\alpha _{1}^{\alpha }E_{h}\left( t\right) +(\mu _{h}^{\alpha }+\delta
_{h}^{\alpha }+\gamma ^{\alpha })I_{h}\left( t\right) +u_{2}\left( t\right)
I_{h}+u_{3}\left( t\right) I_{h}\left( t\right)  \\ 
-\alpha _{2}^{\alpha }E_{h}\left( t\right) +(\varphi ^{\alpha }+\tau
^{\alpha }+\mu _{h}^{\alpha }+\delta _{h}^{\alpha })Q_{h}\left( t\right)
-u_{3}\left( t\right) I_{h}\left( t\right)  \\ 
-\gamma ^{\alpha }I_{h}\left( t\right) -\tau ^{\alpha }Q_{h}\left( t\right)
+\mu _{h}^{\alpha }R_{h}\left( t\right) -u_{1}\left( t\right) S_{h}\left(
t\right) -u_{2}\left( t\right) I_{h}\left( t\right)  \\ 
-\theta _{r}^{\alpha }+\frac{\beta _{3}^{\alpha }S_{r}\left( t\right)
I_{r}\left( t\right) }{N_{r}\left( t\right) }+\mu _{r}^{\alpha }S_{r}\left(
t\right)  \\ 
-\frac{\beta _{3}^{\alpha }S_{r}\left( t\right) I_{r}\left( t\right) }{
N_{r}\left( t\right) }+(\mu _{r}^{\alpha }+\alpha _{3}^{\alpha })E_{r}\left(
t\right)  \\ 
-\alpha _{3}^{\alpha }E_{r}\left( t\right) +(\mu _{r}^{\alpha }+\delta
_{r}^{\alpha })I_{r}\left( t\right) 
\end{array}
\right] .
\end{equation*}}
Hence, $\rho $ denotes the spectral radius of the $FV^{-1}$ matrix. 
After~creating the $F$ and $V$ matrices at the disease-free equilibrium 
point $E_{0}$ as $F_{0}$ and $V_{0}$ matrices, the~largest eigenvalue 
of the multiplication of the matrices $F_{0}$ and $V_{0}^{-1}$ is calculated and
then $R_{0}$ is obtained as: 
{\begin{equation}
R_{0}=\rho \left( F_{0}V_{0}^{-1}\right) =\frac{\alpha _{1}^{\alpha }\beta
_{2}^{\alpha }}{\left( \alpha _{1}^{\alpha }+\alpha _{2}^{\alpha }+\mu
_{h}^{\alpha }\right) \left( \mu _{h}^{\alpha }+\delta _{h}^{\alpha }+\gamma
^{\alpha }+u_{2}\left( t\right) +u_{3}\left( t\right) \right) }.
\label{eq31}
\end{equation}

Note that the spread of monkeypox is reduced by optimal treatment 
and quarantine strategies characterized by the controls 
$u_{2}\left( t\right)$ and $u_{3}\left( t\right)$ as time-dependent functions.}

\subsection{Local Stability~Analysis}

The local stability of the system $\left( \ref{eq11}\right) $ at $E_{0}$ and 
$E_{\ast }$ is~analyzed.

\begin{Theorem}
The controlled system $\left( \ref{eq11}\right) $ at $E_{0}$ is locally
asymptotically stable if the condition
{\begin{equation*}
\left( 1-R_{0}\right) \left( \alpha _{1}^{\alpha }+\alpha _{2}^{\alpha }+\mu
_{h}^{\alpha }\right) \left( \mu _{h}^{\alpha }+\delta _{h}^{\alpha }+\gamma
^{\alpha }+u_{2}\left( t\right)
+u_{3}\left( t\right)\right) >\alpha _{3}^{\alpha }\beta _{3}^{\alpha }
\end{equation*}}
is satisfied for $R_{0}\leq 1.$ Otherwise, the~system is unstable.
\end{Theorem}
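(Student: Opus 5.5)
We linearize system \eqref{eq11} at $E_{0}$, treating the controls $u_1,u_2,u_3$ as frozen constants, and apply the fractional eigenvalue criterion $|\arg(\lambda_i)|>\alpha\pi/2$ recalled at the start of this section. At $E_{0}$ we have $S_h=N_h=\theta_h^\alpha/\mu_h^\alpha$ and $S_r=N_r=\theta_r^\alpha/\mu_r^\alpha$. Hence $S_h/N_h=1$ and $S_r/N_r=1$, and the incidence terms linearize to $\beta_1^\alpha I_r+\beta_2^\alpha I_h$ and $\beta_3^\alpha I_r$.

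The Jacobian $J(E_0)$ is block triangular once the variables are ordered as $(S_h,R_h,S_r)$ followed by the infected and quarantined ones. The diagonal entries coming from $S_h$, $R_h$ and $S_r$ give the eigenvalues
\[
-(\mu_h^\alpha+u_1),\qquad -\mu_h^\alpha,\qquad -\mu_r^\alpha,
\]
and $Q_h$ gives $-k_3$, with $k_3$ as in \eqref{eq29}. All of these are negative reals, so they satisfy the argument condition. The remaining eigenvalues come from the coupled block in $(E_h,I_h,E_r,I_r)$:
\[
\begin{pmatrix}
-k_1 & \beta_2^\alpha & 0 & \beta_1^\alpha\\
\alpha_1^\alpha & -k_2 & 0 & 0\\
0&0&-k_4&\beta_3^\alpha\\
0&0&\alpha_3^\alpha&-k_5
\end{pmatrix},
\]
with $k_1,k_2,k_4,k_5$ as in \eqref{eq29}. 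The coupling $\beta_1^\alpha$ sits only in the upper-right block, so the characteristic polynomial factors into two quadratics. The human quadratic is
\[
\lambda^2+(k_1+k_2)\lambda+k_1k_2(1-R_0),
\]
using \eqref{eq31}. The rodent quadratic is
\[
\lambda^2+(k_4+k_5)\lambda+k_4k_5-\alpha_3^\alpha\beta_3^\alpha .
\]

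By Routh--Hurwitz, a real quadratic with positive trace coefficient has roots with negative real parts (hence $|\arg\lambda|>\pi/2>\alpha\pi/2$) exactly when its constant term is positive. So the human factor requires $R_0<1$, and the rodent factor requires $k_4k_5>\alpha_3^\alpha\beta_3^\alpha$.

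The main obstacle is reconciling this with the stated condition $(1-R_0)k_1k_2>\alpha_3^\alpha\beta_3^\alpha$, which mixes the two blocks. Our plan is to present the stated inequality as a sufficient condition within the paper's framework. One route is to compute the characteristic polynomial of the full $4\times4$ block as a quartic without exploiting the factorization, and read off the Routh--Hurwitz conditions. The other is to note that the stated inequality forces $(1-R_0)k_1k_2>0$, i.e.\ $R_0<1$. We would then have to argue separately that $k_4k_5>\alpha_3^\alpha\beta_3^\alpha$. This rodent condition should be stated as an additional requirement, equivalently as the rodent reproduction number being less than one. It does not seem to follow from the stated hypothesis.

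For instability we use the converse. If $R_0>1$, or more generally if a constant term above is negative, that quadratic has a positive real root $\lambda$. Then $\arg\lambda=0<\alpha\pi/2$, so $E_0$ is unstable. The borderline case $R_0=1$ gives a zero eigenvalue and is excluded from asymptotic stability.
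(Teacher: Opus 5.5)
Your spectral computation is correct, and it is more accurate than the paper's. The paper's proof takes the same route: it writes down $J_{E_0}$ correctly, then computes eigenvalues and applies $|\arg\lambda_i|>\alpha\pi/2$. But it lists $-k_4$ and $-k_5$ among the eigenvalues and puts $-\alpha_3^{\alpha}\beta_3^{\alpha}$ into the constant term $B$ of the human quadratic. Neither step is consistent with the block-triangular structure you found. The rodent block $\left(\begin{smallmatrix}-k_4&\beta_3^{\alpha}\\ \alpha_3^{\alpha}&-k_5\end{smallmatrix}\right)$ has eigenvalues $-k_4,-k_5$ only if $\alpha_3^{\alpha}\beta_3^{\alpha}=0$. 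The true characteristic polynomial is $(\lambda+\mu_h^{\alpha}+u_1)(\lambda+\mu_h^{\alpha})(\lambda+\mu_r^{\alpha})(\lambda+k_3)$ times your two quadratics. The mixed inequality in the statement comes only from that error.

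So the hedge at the end of your plan cannot be resolved in favour of the statement, and you should drop that attempt. Your first route is a dead end: the quartic of the $(E_h,I_h,E_r,I_r)$ block is exactly the product of your two quadratics. Its Routh--Hurwitz conditions are therefore just positivity of the two constant terms. Your second route needs the extra hypothesis because the statement is false as written.

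For a counterexample, take $u_1=u_2=u_3=0$, so that $E_0$ is a genuine equilibrium, and prescribe the $\alpha$-th powers directly.
\begin{itemize}
\item Human parameters: $\alpha_1^{\alpha}=1$, $\alpha_2^{\alpha}=1.9$, $\mu_h^{\alpha}=0.1$, $\delta_h^{\alpha}=1$, $\gamma^{\alpha}=1.9$, $\beta_2^{\alpha}=0.9$. Then $k_1=k_2=3$, $R_0=0.1$ and $(1-R_0)k_1k_2=8.1$.
\item Rodent parameters: $\mu_r^{\alpha}=\alpha_3^{\alpha}=\delta_r^{\alpha}=1$ and $\beta_3^{\alpha}=5$. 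Then $k_4k_5=4$.
\end{itemize}
The hypothesis $8.1>5$ holds with $R_0\le 1$. Yet $\sqrt{5}-2>0$ is an eigenvalue, so $E_0$ is unstable.

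Now use $\beta_2^{\alpha}=8.9$ and $\beta_3^{\alpha}=1$ instead. The hypothesis fails ($0.1<1$), but both constant terms ($0.1$ and $3$) are positive, so $E_0$ is locally asymptotically stable. Hence the ``otherwise unstable'' clause fails as well.

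What your argument does prove, and what you should state, is the following. The criterion $|\arg\lambda_i|>\alpha\pi/2$ holds at $E_0$ if and only if $R_0<1$ and $\alpha_3^{\alpha}\beta_3^{\alpha}<k_4k_5$. Moreover, $E_0$ is unstable if $R_0>1$ or $\alpha_3^{\alpha}\beta_3^{\alpha}>k_4k_5$. Your instability paragraph already proves this corrected converse, not the stated one.

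One further point you inherit from the paper concerns $u_1$. With a frozen $u_1>0$, the point $E_0$ of \eqref{eq28} is not an equilibrium of \eqref{eq11}, since the $S_h$-equation gives $-u_1\theta_h^{\alpha}/\mu_h^{\alpha}\neq 0$. You should either restrict to $u_1=0$ or linearize at the shifted disease-free state $S_h^0=\theta_h^{\alpha}/(\mu_h^{\alpha}+u_1)$, $R_h^0=u_1S_h^0/\mu_h^{\alpha}$. At that state, $\beta_1^{\alpha}$ and $\beta_2^{\alpha}$ are multiplied by $S_h^0/N_h^0=\mu_h^{\alpha}/(\mu_h^{\alpha}+u_1)$. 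Your factorization survives unchanged, with $R_0$ replaced by $R_0\,\mu_h^{\alpha}/(\mu_h^{\alpha}+u_1)$.
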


\begin{proof}
The Jacobian matrix of system $\left( \ref{eq11}\right) $ at $E_{0}$ is
\begin{equation*}
J_{E_{0}}=\left[ 
\begin{array}{cccccccc}
-\mu _{h}^{\alpha }-{u_{1}\left( t\right)} & 0 & -\beta _{2}^{\alpha } & \varphi ^{\alpha } & 
0 & 0 & 0 & -\beta _{1}^{\alpha } \\ 
0 & -k_{1} & \beta _{2}^{\alpha } & 0 & 0 & 0 & 0 & \beta _{1}^{\alpha } \\ 
0 & \alpha _{1}^{\alpha } & -k_{2} & 0 & 0 & 0 & 0 & 0 \\ 
0 & \alpha _{2}^{\alpha } & {u_{3}\left( t\right)} & -k_{3} & 0 & 0 & 0 & 0 \\ 
u_{1}\left( t\right) & 0 & \gamma ^{\alpha }+{u_{2}\left( t\right)} & \tau ^{\alpha } & -\mu _{h}^{\alpha }
& 0 & 0 & 0 \\ 
0 & 0 & 0 & 0 & 0 & -\mu _{r}^{\alpha } & 0 & -\beta _{3}^{\alpha } \\ 
0 & 0 & 0 & 0 & 0 & 0 & -k_{4} & \beta _{3}^{\alpha } \\ 
0 & 0 & 0 & 0 & 0 & 0 & \alpha _{3}^{\alpha } & -k_{5}
\end{array}
\right].
\end{equation*}
Here, $k_{1}=\alpha _{1}^{\alpha }+\alpha _{2}^{\alpha }+\mu _{h}^{\alpha },$ 
$k_{2}=\mu _{h}^{\alpha }+\delta _{h}^{\alpha }+\gamma ^{\alpha
}+{u_{2}\left( t\right)+u_{3}\left( t\right)},$ $k_{3}=\varphi ^{\alpha }+\tau ^{\alpha }+\mu _{h}^{\alpha
}+\delta _{h}^{\alpha },$\linebreak   $k_{4}=\mu _{r}^{\alpha }+\alpha _{3}^{\alpha }$, 
$k_{5}=\mu _{r}^{\alpha }+\delta _{r}^{\alpha }$. The~eigenvalues of the
matrix $J_{E_{0}}$ are 
\begin{eqnarray*}
\lambda _{1} &=&-\mu _{h}^{\alpha }-{u_{1}\left( t\right)}<0,\text{ }\lambda _{2}=-\left(
\varphi ^{\alpha }+\tau ^{\alpha }+\mu _{h}^{\alpha }+\delta _{h}^{\alpha
}\right) <0,\text{ }\lambda _{3}=-\mu _{h}^{\alpha }<0, \\
\lambda _{4} &=&-\mu _{r}^{\alpha }<0,\text{ }\lambda _{5}=-\left( \mu
_{r}^{\alpha }+\alpha _{3}^{\alpha }\right) <0,\text{ }\lambda _{6}=-\left(
\mu _{r}^{\alpha }+\delta _{r}^{\alpha }\right) <0.
\end{eqnarray*}
The remaining two eigenvalues are calculated by the characteristic equation: 
\begin{equation*}
\lambda ^{2}+A\lambda +B=0,
\end{equation*}
where the coefficients $A$ and $B$ are
\begin{eqnarray*}
A &=&\alpha _{1}^{\alpha }+\alpha _{2}^{\alpha }+\delta _{h}^{\alpha
}+\gamma ^{\alpha }+2\mu _{h}^{\alpha }+{u_{2}\left( t\right)+u_{3}\left( t\right)}, \\
B &=&\left( 1-R_{0}\right) \left( \alpha _{1}^{\alpha }+\alpha _{2}^{\alpha
}+\mu _{h}^{\alpha }\right) \left( \mu _{h}^{\alpha }+\delta _{h}^{\alpha
}+\gamma ^{\alpha }+{u_{2}\left( t\right)+u_{3}\left( t\right)}\right) -\alpha _{3}^{\alpha }\beta
_{3}^{\alpha }.
\end{eqnarray*}

If all eigenvalues of the matrix $J_{E_{0}}$ are negative $\left( \left\vert
\arg \left( \lambda _{i}\right) \right\vert >\frac{\alpha \pi }{2}\right) ,$
the disease-free equilibrium point $E_{0}$ is locally asymptotically stable.
It can be seen from the above that since the coefficient $A$ is always
positive, it can be seen to be locally asymptotic stable. However, the~
conditions for coefficient $B$ are given as: if $R_{0}\leq 1$, 
\begin{equation*}
\left( 1-R_{0}\right) \left( \alpha _{1}^{\alpha }+\alpha _{2}^{\alpha }+\mu
_{h}^{\alpha }\right) \left( \mu _{h}^{\alpha }+\delta _{h}^{\alpha }+\gamma
^{\alpha }+{u_{2}\left( t\right)+u_{3}\left( t\right)}\right) >\alpha _{3}^{\alpha }\beta _{3}^{\alpha }
\end{equation*}
is satisfied. With~this condition fulfilled, the~system is locally
asymptotically stable at $E_{0}$. Otherwise, it is unstable.
\end{proof}

\begin{Theorem}
If $R_{0}>1$, then the~system $\left( \ref{eq11}\right) $ is locally
asymptotically stable at $E_{\ast }.$
\end{Theorem}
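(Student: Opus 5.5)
We follow the same route as for $E_{0}$: linearize system \eqref{eq11} at the endemic point $E_{\ast}$ given in \eqref{eq29}, treating the controls $u_1,u_2,u_3$ as frozen constants. We then verify the fractional Matignon-type condition $|\arg(\lambda_i)|>\alpha\pi/2$ for every eigenvalue of $J_{E_\ast}$. Since $0<\alpha<1$, it suffices to show that all eigenvalues have negative real part, because any $\lambda$ with $\Re\lambda<0$ satisfies $|\arg\lambda|>\pi/2>\alpha\pi/2$. To simplify the linearization, we replace $N_h$ and $N_r$ by their equilibrium values $N_{h\ast}$ and $N_{r\ast}$, consistent with the definitions of $\phi_h$ and $\phi_r$. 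We keep the notation $k_1,\dots,k_5$ introduced in the proof for $E_0$.

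\textbf{Step 1: the block structure of $J_{E_\ast}$.} The rodent subsystem $(S_r,E_r,I_r)$ does not depend on the human variables. Hence $J_{E_\ast}$ is block lower-triangular, with a $3\times 3$ rodent block and a $5\times 5$ human block, and its spectrum is the union of the spectra of the two blocks. In the human block, the $R_h$ row contributes the eigenvalue $-\mu_h^\alpha<0$ directly, since $R_h$ appears in no other equation. The rodent block is
\begin{equation*}
\begin{pmatrix} -(\phi_r+\mu_r^\alpha) & 0 & -\beta_3^\alpha S_{r\ast}/N_{r\ast}\\ \phi_r & -k_4 & \beta_3^\alpha S_{r\ast}/N_{r\ast}\\ 0 & \alpha_3^\alpha & -k_5\end{pmatrix}.
\end{equation*}
We compute its characteristic cubic $\lambda^3+a_1\lambda^2+a_2\lambda+a_3$. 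The remaining $4\times4$ block in $(S_h,E_h,I_h,Q_h)$ gives a quartic $\lambda^4+b_1\lambda^3+b_2\lambda^2+b_3\lambda+b_4$.

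\textbf{Step 2: simplify the coefficients using the equilibrium relations.} At $E_\ast$, the identities $k_4E_{r\ast}=\phi_r S_{r\ast}$, $k_5I_{r\ast}=\alpha_3^\alpha E_{r\ast}$, $k_1E_{h\ast}=\phi_hS_{h\ast}$, $k_2I_{h\ast}=\alpha_1^\alpha E_{h\ast}$, and $k_3Q_{h\ast}=\alpha_2^\alpha E_{h\ast}+u_3I_{h\ast}$ let us rewrite the combinations $\beta_3^\alpha S_{r\ast}/N_{r\ast}$ and $\beta_2^\alpha S_{h\ast}/N_{h\ast}$ in terms of $k_i$ and of the threshold $R_0$ from \eqref{eq31}. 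For example, $\alpha_3^\alpha\beta_3^\alpha S_{r\ast}/N_{r\ast}=k_4k_5$ at the rodent endemic level. This identity makes the constant term equal to $a_3=\phi_r k_4k_5>0$, and it gives $a_1,a_2>0$ with $a_1a_2>a_3$, because $a_1a_2$ contains $\phi_r k_4k_5$ plus strictly positive extra terms. For the human block we expect, in the same way, $b_i>0$. The key term $b_4$ should factor as a positive multiple of $\phi_h$. The existence of the endemic state itself, i.e.\ positivity of the denominator in \eqref{eq29}, is what $R_0>1$ provides.

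\textbf{Step 3: apply the Routh--Hurwitz criteria.} For the cubic, we use $a_1>0$, $a_3>0$, $a_1a_2>a_3$. For the quartic, we use $b_1,b_3,b_4>0$, $b_1b_2>b_3$, and $b_1b_2b_3>b_3^2+b_1^2b_4$. Once these hold, all eigenvalues have negative real part, and local asymptotic stability at $E_\ast$ follows.

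\textbf{Main obstacle.} The hard part is the quartic Hurwitz inequality $b_1b_2b_3>b_3^2+b_1^2b_4$. It contains the feedback loop $Q_h\to S_h$ through $\varphi^\alpha$ and the cross-infection term $\beta_1^\alpha I_r$, so it is not automatic. Our first attempt is to expand the products and show that every negative monomial is dominated by a positive one. To do this we would use $\varphi^\alpha<k_3$ and the equilibrium identity replacing $\alpha_1^\alpha\beta_2^\alpha S_{h\ast}/N_{h\ast}$ by $k_1k_2$ minus a positive quantity; this replacement is where $R_0>1$ enters. If a general proof proves unwieldy, we would state the result under the Routh--Hurwitz conditions, in the same way the $E_0$ result was formulated with an explicit condition on $B$, and check them numerically for the parameter values used in Section~\ref{sec:7}.
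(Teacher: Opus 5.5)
\textbf{Gap 1: the hypothesis $R_0>1$ is never actually used.} You never show that the human quartic is Hurwitz. Your fallback, adopting the Routh--Hurwitz inequalities as hypotheses and checking them numerically, proves a different, conditional statement. The paper's own proof stops at the same place (``if $C,D>0$ or \dots''), after reading the diagonal entries $-k_1,\dots,-k_5,-\mu_h^\alpha$ of the non-triangular $J_{E_\ast}$ as its eigenvalues. Your block-triangular splitting is sounder than that, but it does not close the argument.

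More seriously, $R_0>1$ does not enter where you say it does:
\begin{itemize}
\item The denominator in \eqref{eq29} is positive for every $\phi_h\ge0$, because $\varphi^\alpha<k_3$ and $\alpha_2^\alpha k_2+\alpha_1^\alpha u_3<k_1k_2$.
\item The identity $\alpha_1^\alpha\beta_2^\alpha S_{h\ast}/N_{h\ast}=k_1k_2-k_2\beta_1^\alpha I_{r\ast}S_{h\ast}/(N_{h\ast}E_{h\ast})$ follows from the $E_h$ and $I_h$ equilibrium equations alone.
\item In your frozen setting, $b_4=(\mu_h^\alpha+u_1)k_3q+\phi_h\bigl[k_1k_2k_3-\varphi^\alpha(\alpha_2^\alpha k_2+\alpha_1^\alpha u_3)\bigr]$ with $q=k_1k_2-\alpha_1^\alpha\beta_2^\alpha S_{h\ast}/N_{h\ast}\ge0$. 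It is positive with no reference to $R_0$.
\end{itemize}

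What actually controls $E_\ast$ is the rodent threshold. Your identity $\alpha_3^\alpha\beta_3^\alpha S_{r\ast}/N_{r\ast}=k_4k_5$, together with $S_{r\ast}<N_{r\ast}$, shows that $I_{r\ast}>0$ requires $\alpha_3^\alpha\beta_3^\alpha>k_4k_5$. This involves only rodent parameters, so it is logically independent of \eqref{eq31}, which is purely human-to-human.
\begin{itemize}
\item If the rodent threshold holds, spillover $\beta_1^\alpha I_{r\ast}$ produces a positive human state for every $R_0$, so your hypothesis does no work.
\item If it fails, there is no $E_\ast$ with $I_{r\ast}\neq0$ at all. This is the case for the data of Section~\ref{sec:7}, where $\alpha_3^\alpha\beta_3^\alpha/(k_4k_5)\approx0.36$ for $\alpha=0.9$, so your numerical check would test nothing.
\end{itemize}
$R_0$ is a genuine threshold only at the rodent-free boundary equilibrium, where $S_{h\ast}/N_{h\ast}=1/R_0$. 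Even there, rodent stability needs $\alpha_3^\alpha\beta_3^\alpha<k_4k_5$. With $u_1>0$, existence needs $R_0>1+u_1/\mu_h^\alpha$, because the vaccination flow forces $S_{h\ast}/N_{h\ast}\le\mu_h^\alpha/(\mu_h^\alpha+u_1)$. A provable statement needs different hypotheses.

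\textbf{Gap 2: freezing $N_h,N_r$ does not give the Jacobian of \eqref{eq11}.} $N_h=S_h+E_h+I_h+Q_h+R_h$ is a state function; the paper's co-state equations differentiate it. So the incidence term adds $\pm\phi_hS_{h\ast}/N_{h\ast}$ to every human column, including the $R_h$ column.
\begin{itemize}
\item Hence $R_h$ does not decouple and $-\mu_h^\alpha$ is not an eigenvalue.
\item The human part is a genuine $5\times5$ block. Equivalently, use $N_h$ as a variable via \eqref{eq15a}, which couples back to $I_h,Q_h$ through $\delta_h^\alpha$. You would have to verify the Routh--Hurwitz conditions of that quintic.
\end{itemize}
The rodent block survives the correction, but with different coefficients. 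In the variables $(E_r,I_r,N_r)$, with $i=I_{r\ast}/N_{r\ast}$ and $e=E_{r\ast}/N_{r\ast}$, its characteristic polynomial is $(\lambda+\mu_r^\alpha)\bigl[\lambda^2+(k_4+k_5+\beta_3^\alpha i)\lambda+\beta_3^\alpha i(k_5+\alpha_3^\alpha)\bigr]+\alpha_3^\alpha\beta_3^\alpha\delta_r^\alpha i(e+i)$. It is Hurwitz: the product of its first two coefficients is at least $(\mu_r^\alpha+k_5)\beta_3^\alpha i(k_5+\alpha_3^\alpha)$, which exceeds the constant term since $k_5>\delta_r^\alpha(e+i)$. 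Its coefficients are not your $a_1,a_2,a_3$. The paper's $J_{E_\ast}$ freezes $N_h,N_r$ the same way, so here you are faithful to the paper but not to the model.
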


\begin{proof}
The Jacobian matrix of system $\left( \ref{eq11}\right) $ at $E_{\ast }$ is
\begin{equation*}
J_{E_{\ast }}=\left[ 
\begin{array}{cccccccc}
-\phi _{h}-\mu _{h}^{\alpha }-{u_{1}\left( t\right)} & 0 & -\frac{\beta _{2}^{\alpha
}S_{h_{\ast }}}{N_{h}\left( t\right)} & \varphi ^{\alpha } & 0 & 0 & 0 & -\frac{\beta
_{1}^{\alpha }S_{h_{\ast }}}{N_{h}\left( t\right)} \\ 
\phi _{h} & -k_{1} & \frac{\beta _{2}^{\alpha }S_{h_{\ast }}}{N_{h}\left( t\right)} & 0 & 0
& 0 & 0 & \frac{\beta _{1}^{\alpha }S_{h_{\ast }}}{N_{h}\left( t\right)} \\ 
0 & \alpha _{1}^{\alpha } & -k_{2} & 0 & 0 & 0 & 0 & 0 \\ 
0 & \alpha _{2}^{\alpha } & {u_{3}\left( t\right)} & -k_{3} & 0 & 0 & 0 & 0 \\ 
u_{1} & 0 & \gamma ^{\alpha }+{u_{2}\left( t\right) }& \tau ^{\alpha } & -\mu _{h}^{\alpha }
& 0 & 0 & 0 \\ 
0 & 0 & 0 & 0 & 0 & -\phi _{r}-\mu _{r}^{\alpha } & 0 & -\frac{\beta
_{3}^{\alpha }S_{r_{\ast }}}{N_{r}\left( t\right)} \\ 
0 & 0 & 0 & 0 & 0 & \phi _{r} & -k_{4} & \frac{\beta _{3}^{\alpha
}S_{r_{\ast }}}{N_{r}\left( t\right)} \\ 
0 & 0 & 0 & 0 & 0 & 0 & \alpha _{3}^{\alpha } & -k_{5}
\end{array}
\right],
\end{equation*}
where $k_{1}=\alpha _{1}^{\alpha }+\alpha _{2}^{\alpha }+\mu _{h}^{\alpha },$
$k_{2}=\mu _{h}^{\alpha }+\delta _{h}^{\alpha }+\gamma ^{\alpha
}+{u_{2}\left( t\right)+u_{3}\left( t\right)},$ $k_{3}=\varphi ^{\alpha }+\tau ^{\alpha }+\mu _{h}^{\alpha
}+\delta _{h}^{\alpha },$ $k_{4}=\mu _{r}^{\alpha }+\alpha _{3}^{\alpha }$, 
$k_{5}=\mu _{r}^{\alpha }+\delta _{r}^{\alpha },$ $\phi _{h}=\frac{\beta
_{1}^{\alpha }I_{r_{\ast }}+\beta _{2}^{\alpha }I_{h_{\ast }}}{N_{h}}$, 
$\phi _{r}=\frac{\beta _{3}^{\alpha }I_{r_{\ast }}}{N_{r}}.$ The eigenvalues
of the matrix $J_{E_{\ast }}$~are
\begin{eqnarray*}
\chi _{1} &=&-\left( \alpha _{1}^{\alpha }+\alpha _{2}^{\alpha }+\mu
_{h}^{\alpha }\right) ,\text{ }\chi _{2}=-\left( \mu _{h}^{\alpha }+\delta
_{h}^{\alpha }+\gamma ^{\alpha }+{u_{2}\left( t\right)+u_{3}\left( t\right)}\right) ,\text{ }\chi _{3}=-\mu
_{h}^{\alpha }, \\
\chi _{4} &=&-\left( \varphi ^{\alpha }+\tau ^{\alpha }+\mu _{h}^{\alpha
}+\delta _{h}^{\alpha }\right) ,\text{ }\chi _{5}=-\left( \mu _{r}^{\alpha
}+\alpha _{3}^{\alpha }\right) ,\text{ }\chi _{6}=-\left( \mu _{r}^{\alpha
}+\delta _{r}^{\alpha }\right) .
\end{eqnarray*}
The characteristic equation giving the other eigenvalues is
\begin{equation*}
\chi ^{2}+C\chi+D=0,
\end{equation*}
where the coefficients $C$ and $D$ are
{\begin{eqnarray*}
C &=&\mu _{h}^{\alpha }+\mu _{r}^{\alpha }+\frac{\beta _{1}^{\alpha
}I_{r_{\ast }}+\beta _{2}^{\alpha }I_{h_{\ast }}}{N_{h}\left( t\right)}+\frac{\beta
_{3}^{\alpha }I_{r_{\ast }}}{N_{r}\left( t\right)}+u_{1}\left( t\right), \\
D &=&\mu _{h}^{\alpha }\mu _{r}^{\alpha }+\left( \frac{\beta _{1}^{\alpha
}I_{r_{\ast }}+\beta _{2}^{\alpha }I_{h_{\ast }}}{N_{h}\left( t\right)}\right) \mu
_{r}^{\alpha }+\frac{\beta _{3}^{\alpha }I_{r_{\ast }}\mu _{h}^{\alpha }}{
N_{r}\left( t\right)}+\frac{\beta _{2}^{\alpha }\beta _{3}^{\alpha }I_{h_{\ast }}I_{r_{\ast
}}}{N_{h}\left( t\right)N_{r}\left( t\right)}+\frac{\beta _{1}^{\alpha }\beta _{3}^{\alpha }I_{r_{\ast
}}^{2}}{N_{h}\left( t\right)N_{r}\left( t\right)} \\
&&-\frac{\alpha _{1}^{\alpha }\beta _{2}^{\alpha }S_{h_{\ast }}}{N_{h}\left( t\right)}
-\frac{\alpha _{3}^{\alpha }\beta _{3}^{\alpha }S_{r_{\ast }}}{N_{r}\left( t\right)}+\mu
_{r}^{\alpha }u_{1}\left( t\right)+\frac{\beta _{3}^{\alpha }I_{r_{\ast }}u_{1}\left( t\right)}{N_{r}\left( t\right)}.
\end{eqnarray*}}

In order to the system $\left( \ref{eq11}\right) $ to be asymptotically
stable at the $E_{\ast }$ equilibrium point, the~eigenvalues should be
negative real numbers. According to the Routh-Hurwitz criterion~\cite{Ahmed}, if the
coefficients $C$ and $D$ are complex, they must have a negative real part.
Thus, if~$C,D>0$ or $C^{2}<4D,$ $C<0$ and $\left\vert \arg \left( \chi
_{i}\right) \right\vert >\frac{\alpha \pi }{2},$ the system is
asymptotically stable.
\end{proof}


\section{Optimal Control of~Monkeypox}
\label{sec:6}

Now, the~optimal control problem for the system $\left( \ref{eq11}\right) $
will be formulated. As~seen in the model dynamics, monkeypox virus, which is
seen in rodents and the bodies of wild animals, spreads between humans by
being transmitted from infected animals to humans. Precautionary controls
are needed to prevent the spread of this epidemic. In~this sense, optimal
control theory serves to provide optimal solutions for possible prevention
strategies. In~this study, the~main aim of optimal control is to minimize
both the cost of the suggested controls and the number of infected humans.
The basis system is equipped with control parameters: $u_{1}(t)$ the rate of
vaccination, $u_{2}(t)$ the rate of treatment, and~$u_{3}(t)$ the quarantine
rate.

The objective function, the~total cost to be minimized, is given as
\begin{equation}
J\underset{\min }{\left( u_{1},u_{2},u_{3}\right) }=\int_{0}^{t_{f}}\left[
I_{h}(t)+E_{h}(t)+\frac{w_{1}}{2}u_{1}^{2}(t)+\frac{w_{2}}{2}u_{2}^{2}(t)
+\frac{w_{3}}{2}u_{3}^{2}(t)\right] dt.  \label{eq33}
\end{equation}
Here, we use a quadratic form to measure control costs. In~addition, 
$w_{1}$, $w_{2}$, $w_{3}$ are positive weighting coefficients that can be chosen to
balance the cost of controls. The~integrand in \eqref{eq33}
is called Lagrangian and corresponds to
\begin{equation}
\mathscr{L}\left( E_{h},I_{h},u_{1},u_{2},u_{3}\right) =I_{h}+E_{h}
+\frac{w_{1}}{2}u_{1}^{2}+\frac{w_{2}}{2}u_{2}^{2}+\frac{w_{3}}{2}
u_{3}^{2}.  
\label{eq34}
\end{equation}

The Lipschitz condition is provided to express the existence of optimal
control functions in the controlled system $\left( \ref{eq11}\right) $. As~a
result, the~existence of controls\ $\left( u_{1},u_{2},u_{3}\right) $ are
inferred~\cite{Lukes,Fleming}.

Now, the~optimality conditions of the system will be~obtained.

\subsection*{Optimality~System}

The necessary optimality conditions depend on the state,
co-state, and~control variables. The~Hamiltonian function is defined in the
following to determine the optimality conditions:\vspace{-3pt}
\begin{eqnarray}
\mathcal{H} &=&\mathscr{L}\left( E_{h}(t),I_{h}(t),u_{1}(t),u_{2}(t),u_{3}(t)\right)
+\sum_{i=1}^{8}\lambda _{i}f_{i}  \label{eq35} \\
&=&I_{h}(t)+E_{h}(t)+\frac{w_{1}}{2}u_{1}^{2}(t)+\frac{w_{2}}{2}u_{2}^{2}(t)
+\frac{w_{3}}{2}u_{3}^{2}(t)  \notag \\
&&+\lambda _{1}\left( t\right) \left( \theta _{h}^{\alpha }-\frac{(\beta
_{1}^{\alpha }I_{r}(t)+\beta _{2}^{\alpha }I_{h}(t))S_{h}(t)}{N_{h}(t)}-\mu
_{h}^{\alpha }S_{h}(t)+\varphi ^{\alpha }Q_{h}(t)-u_{1}\left( t\right)
S_{h}(t)\right)   \notag \\
&&+\lambda _{2}\left( t\right) \left( \frac{(\beta _{1}^{\alpha }I_{r}\left(
t\right) +\beta _{2}^{\alpha }I_{h}\left( t\right) )S_{h}\left( t\right) }{
N_{h}\left( t\right) }-(\alpha _{1}^{\alpha }+\alpha _{2}^{\alpha }+\mu
_{h}^{\alpha })E_{h}\left( t\right) \right)   \notag \\
&&+\lambda _{3}\left( t\right) \left( \alpha _{1}^{\alpha }E_{h}\left(
t\right) -(\mu _{h}^{\alpha }+\delta _{h}^{\alpha }+\gamma ^{\alpha
})I_{h}\left( t\right) -u_{2}\left( t\right) I_{h}\left( t\right)
-u_{3}\left( t\right) I_{h}\left( t\right) \right)   \notag \\
&&+\lambda _{4}\left( t\right) \left( \alpha _{2}^{\alpha }E_{h}\left(
t\right) -(\varphi ^{\alpha }+\tau ^{\alpha }+\mu _{h}^{\alpha }+\delta
_{h}^{\alpha })Q_{h}\left( t\right) +u_{3}\left( t\right) I_{h}\left(
t\right) \right)   \notag \\
&&+\lambda _{5}\left( t\right) \left( \gamma ^{\alpha }I_{h}\left( t\right)
+\tau ^{\alpha }Q_{h}\left( t\right) -\mu _{h}^{\alpha }R_{h}\left( t\right)
+u_{1}\left( t\right) S_{h}\left( t\right) +u_{2}\left( t\right) I_{h}\left(
t\right) \right)   \notag \\
&&+\lambda _{6}\left( t\right) \left( \theta _{r}^{\alpha }-\frac{\beta
_{3}^{\alpha }S_{r}\left( t\right) I_{r}\left( t\right) }{N_{r}\left(
t\right) }-\mu _{r}^{\alpha }S_{r}\left( t\right) \right)   \notag \\
&&+\lambda _{7}\left( t\right) \left( \frac{\beta _{3}^{\alpha }S_{r}\left(
t\right) I_{r}\left( t\right) }{N_{r}\left( t\right) }-(\mu _{r}^{\alpha
}+\alpha _{3}^{\alpha })E_{r}\left( t\right) \right)   \notag \\
&&+\lambda _{8}\left( t\right) \left( \alpha _{3}^{\alpha }E_{r}\left(
t\right) -(\mu _{r}^{\alpha }+\delta _{r}^{\alpha })I_{r}\left( t\right)
\right) .  \notag
\end{eqnarray}
Here, $\lambda_{i}$, $i=1,2,\ldots,8$, denote co-state functions 
and $f_{i}$, $i=1,2,\ldots,8$, represent the right-hand 
side of the system \eqref{eq11}.

\begin{Theorem}\label{Theorem8}
Suppose $\left( S_{h}^{\ast },E_{h}^{\ast },I_{h}^{\ast },Q_{h}^{\ast
},R_{h}^{\ast },S_{r}^{\ast },E_{r}^{\ast },I_{r}^{\ast }\right) $ are the
optimal solutions of the system $\left( \ref{eq11}\right) $, and~
$u_{1}^{\ast },u_{2}^{\ast },$ and $u_{3}^{\ast }$ are the optimal controls
minimizing the objective function. Then there are co-state variables 
$\lambda _{i}$ that satisfy the co-state equations
\begin{eqnarray*}
&&_{t}^{ABC}D_{t_{f}}^{\alpha }\lambda _{1}\left( t\right) =-\lambda
_{1}\left( t\right) \left( \frac{(\beta _{1}^{\alpha }I_{r}^{\ast }(t)+\beta
_{2}^{\alpha }I_{h}^{\ast }(t))\left( E_{h}^{\ast }\left( t\right)
+I_{h}^{\ast }(t)+Q_{h}^{\ast }\left( t\right) +R_{h}^{\ast }\left( t\right)
\right) }{\left( N_{h}^{\ast }\left( t\right) \right) ^{2}}+\mu _{h}^{\alpha
}+u_{1}^{\ast }\left( t\right) \right)  \\
&&\text{ \ \ \ \ \ \ \ \ \ \ \ \ \ \ \ \ \ \ }+\lambda _{2}\left( t\right)
\left( \frac{(\beta _{1}^{\alpha }I_{r}^{\ast }(t)+\beta _{2}^{\alpha
}I_{h}^{\ast }(t))\left( E_{h}^{\ast }\left( t\right) +I_{h}^{\ast
}(t)+Q_{h}^{\ast }\left( t\right) +R_{h}^{\ast }\left( t\right) \right) }{
\left( N_{h}^{\ast }\left( t\right) \right) ^{2}}\right) +\lambda _{5}\left(
t\right) u_{1}^{\ast }\left( t)\right)  \\
&&_{t}^{ABC}D_{t_{f}}^{\alpha }\lambda _{2}\left( t\right) =1-\lambda
_{1}\left( t\right) \left( \frac{(\beta _{1}^{\alpha }I_{r}^{\ast }(t)+\beta
_{2}^{\alpha }I_{h}^{\ast }(t))S_{h}^{\ast }(t)}{\left( N_{h}^{\ast }\left(
t\right) \right) ^{2}}\right)  \\
&&\text{ \ \ \ \ \ \ \ \ \ \ \ \ \ \ \ \ \ }-\lambda _{2}\left( t\right)
\left( \frac{(\beta _{1}^{\alpha }I_{r}^{\ast }(t)+\beta _{2}^{\alpha
}I_{h}^{\ast }(t))S_{h}^{\ast }(t)}{\left( N_{h}^{\ast }\left( t\right)
\right) ^{2}}+\alpha _{1}^{\alpha }+\alpha _{2}^{\alpha }+\mu _{h}^{\alpha
}\right) +\lambda _{3}\left( t\right) \alpha _{1}^{\alpha }+\lambda
_{4}\left( t\right) \alpha _{2}^{\alpha } \\
&&_{t}^{ABC}D_{t_{f}}^{\alpha }\lambda _{3}\left( t\right) =1-\lambda
_{1}\left( t\right) \left( \frac{S_{h}^{\ast }(t)\left( \beta _{2}^{\alpha
}\left( S_{h}^{\ast }(t)+E_{h}^{\ast }\left( t\right) +Q_{h}^{\ast }\left(
t\right) +R_{h}^{\ast }\left( t\right) \right) -\beta _{1}^{\alpha
}I_{r}^{\ast }(t)\right) }{\left( N_{h}^{\ast }\left( t\right) \right) ^{2}}
\right)  \\
&&\text{ \ \ \ \ \ \ \ \ \ \ \ \ \ \ \ \ \ \ \ }+\lambda _{2}\left( t\right)
\left( \frac{S_{h}^{\ast }(t)\left( \beta _{2}^{\alpha }\left( S_{h}^{\ast
}(t)+E_{h}^{\ast }\left( t\right) +Q_{h}^{\ast }\left( t\right) +R_{h}^{\ast
}\left( t\right) \right) -\beta _{1}^{\alpha }I_{r}^{\ast }(t)\right) }{
\left( N_{h}^{\ast }\left( t\right) \right) ^{2}}\right)  \\
&&\text{ \ \ \ \ \ \ \ \ \ \ \ \ \ \ \ \ \ \ \ }-\lambda _{3}\left( t\right)
\left( \mu _{h}^{\alpha }+\delta _{h}^{\alpha }+\gamma ^{\alpha
}+u_{2}^{\ast }\left( t\right) +u_{3}^{\ast }\left( t\right) \right)
+\lambda _{4}\left( t\right) u_{3}^{\ast }\left( t\right) +\lambda
_{5}\left( t\right) \left( \gamma ^{\alpha }+u_{2}^{\ast }(t)\right) 
\end{eqnarray*}
\begin{eqnarray*}
&&_{t}^{ABC}D_{t_{f}}^{\alpha }\lambda _{4}\left( t\right) =\lambda
_{1}\left( t\right) \left( \frac{(\beta _{1}^{\alpha }I_{r}^{\ast }(t)+\beta
_{2}^{\alpha }I_{h}^{\ast }(t))S_{h}^{\ast }(t)}{\left( N_{h}^{\ast }\left(
t\right) \right) ^{2}}+\varphi ^{\alpha }\right) -\lambda _{2}\left(
t\right) \left( \frac{(\beta _{1}^{\alpha }I_{r}^{\ast }(t)+\beta
_{2}^{\alpha }I_{h}^{\ast }(t))S_{h}^{\ast }(t)}{\left( N_{h}^{\ast }\left(
t\right) \right) ^{2}}\right)  \\
&&\text{ \ \ \ \ \ \ \ \ \ \ \ \ \ \ \ \ \ \ }-\lambda _{4}\left( t\right)
\left( \varphi ^{\alpha }+\tau ^{\alpha }+\mu _{h}^{\alpha }+\delta
_{h}^{\alpha }\right) +\lambda _{5}\left( t\right) \tau ^{\alpha } \\
&&_{t}^{ABC}D_{t_{f}}^{\alpha }\lambda _{5}\left( t\right) =\lambda
_{1}\left( t\right) \left( \frac{(\beta _{1}^{\alpha }I_{r}^{\ast }(t)+\beta
_{2}^{\alpha }I_{h}^{\ast }(t))S_{h}^{\ast }(t)}{\left( N_{h}^{\ast }\left(
t\right) \right) ^{2}}\right) -\lambda _{2}\left( t\right) \left( \frac{
(\beta _{1}^{\alpha }I_{r}^{\ast }(t)+\beta _{2}^{\alpha }I_{h}^{\ast
}(t))S_{h}^{\ast }(t)}{\left( N_{h}^{\ast }\left( t\right) \right) ^{2}}
\right)  \\
&&\text{ \ \ \ \ \ \ \ \ \ \ \ \ \ \ \ \ \ \ }-\lambda _{5}\left( t\right)
\mu _{h}^{\alpha } \\
&&_{t}^{ABC}D_{t_{f}}^{\alpha }\lambda _{6}\left( t\right) =-\lambda
_{6}\left( t\right) \left( \frac{\beta _{3}^{\alpha }I_{r}^{\ast }\left(
t\right) \left( E_{r}^{\ast }\left( t\right) +I_{r}^{\ast }\left( t\right)
\right) }{\left( N_{r}^{\ast }\left( t\right) \right) ^{2}}+\mu _{r}^{\alpha
}\right) +\lambda _{7}\left( t\right) \left( \frac{\beta _{3}^{\alpha
}I_{r}^{\ast }\left( t\right) \left( E_{r}^{\ast }\left( t\right)
+I_{r}^{\ast }\left( t\right) \right) }{\left( N_{r}^{\ast }\left( t\right)
\right) ^{2}}\right)  \\
&&_{t}^{ABC}D_{t_{f}}^{\alpha }\lambda _{7}\left( t\right) =\lambda
_{6}\left( t\right) \left( \frac{\beta _{3}^{\alpha }S_{r}^{\ast }\left(
t\right) I_{r}^{\ast }\left( t\right) }{\left( N_{r}^{\ast }\left( t\right)
\right) ^{2}}\right) -\lambda _{7}\left( t\right) \left( \frac{\beta
_{3}^{\alpha }S_{r}^{\ast }\left( t\right) I_{r}^{\ast }\left( t\right) }{
\left( N_{r}^{\ast }\left( t\right) \right) ^{2}}+\mu _{r}^{\alpha }+\alpha
_{3}^{\alpha }\right) +\lambda _{8}\left( t\right) \alpha _{3}^{\alpha } \\
&&_{t}^{ABC}D_{t_{f}}^{\alpha }\lambda _{8}\left( t\right) =-\lambda
_{1}\left( t\right) \left( \frac{\beta _{1}^{\alpha }S_{h}^{\ast }(t)}{
N_{h}^{\ast }\left( t\right) }\right) +\lambda _{2}\left( t\right) \left( 
\frac{\beta _{1}^{\alpha }S_{h}^{\ast }(t)}{N_{h}^{\ast }\left( t\right)}
\right) -\lambda _{6}\left( t\right) \left( \frac{\beta _{3}^{\alpha
}S_{r}^{\ast }\left( t\right) \left( S_{r}^{\ast }\left( t\right)
+E_{r}^{\ast }\left( t\right) \right) }{\left( N_{r}^{\ast }\left( t\right)
\right) ^{2}}\right)  \\
&&\text{ \ \ \ \ \ \ \ \ \ \ \ \ \ \ \ \ \ \ }+\lambda _{7}\left( t\right)
\left( \frac{\beta _{3}^{\alpha }S_{r}^{\ast }\left( t\right) \left(
S_{r}^{\ast }\left( t\right) +E_{r}^{\ast }\left( t\right) \right) }{\left(
N_{r}^{\ast }\left( t\right) \right) ^{2}}\right) -\lambda _{8}\left(
t\right) \left( \mu _{r}^{\alpha }+\delta _{r}^{\alpha }\right) 
\end{eqnarray*}
with the transversality conditions
\begin{equation*}
\lambda _{i}\left( t_{f}\right) =0,\text{ }i=1,\ldots,8.
\end{equation*}
Additionally, the~optimal control functions $u_{1}^{\ast },u_{2}^{\ast
},u_{3}^{\ast }$ are given as follows:
\begin{equation}
\left. 
\begin{array}{c}
u_{1}^{\ast }\left( t\right) =\max \left\{ \min \left\{ \frac{\left( \lambda
_{1}(t)+\lambda _{5}(t)\right) S_{h}^{\ast }(t)}{w_{1}},0.9\right\}
,0\right\} , \\ 
u_{2}^{\ast }\left( t\right) =\max \left\{ \min \left\{ \frac{\left( \lambda
_{3}(t)+\lambda _{5}(t)\right) I_{h}^{\ast }(t)}{w_{2}},0.9\right\}
,0\right\} , \\ 
u_{3}^{\ast }\left( t\right) =\max \left\{ \min \left\{ \frac{\left( \lambda
_{3}(t)+\lambda _{4}(t)\right) I_{h}^{\ast }(t)}{w_{3}},0.9\right\}
,0\right\} .
\end{array}
\right. 
\label{bbie3}
\end{equation}
\end{Theorem}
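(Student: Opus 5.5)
The plan is to apply the Pontryagin-type necessary conditions for fractional optimal control problems with the Atangana--Baleanu--Caputo derivative, in the form established by Agrawal and Bahaa~\cite{Agrawal1,Agrawal2,Bahaa19}. We write the state equations \eqref{eq11} as ${}_{0}^{ABC}D_t^{\alpha}x=f(x,u)$, with $x=(S_h,E_h,I_h,Q_h,R_h,S_r,E_r,I_r)$, and introduce the augmented functional
\[
\tilde J=\int_0^{t_f}\Bigl[\mathcal{H}(x,u,\lambda)-\sum_{i=1}^{8}\lambda_i\,{}_{0}^{ABC}D_t^{\alpha}x_i\Bigr]dt ,
\]
where $\mathcal{H}$ is the Hamiltonian \eqref{eq35}. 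We then take the first variation with respect to $x$, $u$ and $\lambda$. Variation in $\lambda$ simply returns the state system. For the variation in $x$, we use the AB integration-by-parts formula
\[
\int_0^{t_f}\lambda\,{}_{0}^{ABC}D_t^{\alpha}\delta x\,dt=\int_0^{t_f}\delta x\,{}_{t}^{ABC}D_{t_f}^{\alpha}\lambda\,dt+\text{boundary terms}.
\]
Since $\delta x(0)=0$ (fixed initial data \eqref{eq12}) and $x(t_f)$ is free, requiring the boundary terms to vanish gives the transversality conditions $\lambda_i(t_f)=0$. The interior terms give the adjoint system ${}_{t}^{ABC}D_{t_f}^{\alpha}\lambda_i=\partial\mathcal{H}/\partial x_i$ (with the sign convention of the cited references). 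The existence of optimal controls, already inferred from the Lipschitz property and \cite{Lukes,Fleming}, guarantees that the optimum being characterized exists.

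The second step is to compute $\partial\mathcal{H}/\partial x_i$ component by component. The only nontrivial terms come from the incidence $\Lambda=(\beta_1^\alpha I_r+\beta_2^\alpha I_h)S_h/N_h$, with $N_h=S_h+E_h+I_h+Q_h+R_h$, and $\Lambda_r=\beta_3^\alpha S_rI_r/N_r$, with $N_r=S_r+E_r+I_r$. Applying the quotient rule gives, for example,
\[
\partial_{S_h}\Lambda=\frac{(\beta_1^\alpha I_r+\beta_2^\alpha I_h)(N_h-S_h)}{N_h^2},\qquad
\partial_{E_h}\Lambda=-\frac{(\beta_1^\alpha I_r+\beta_2^\alpha I_h)S_h}{N_h^2},
\]
and $\partial_{I_h}\Lambda=S_h\bigl(\beta_2^\alpha(N_h-I_h)-\beta_1^\alpha I_r\bigr)/N_h^2$. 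The derivatives of $\Lambda_r$ are analogous. The linear terms, including those containing $u_1,u_2,u_3$, contribute directly. The constant $1$ in the $\lambda_2$ and $\lambda_3$ equations comes from the Lagrangian terms $E_h+I_h$. Collecting these contributions, evaluated along the optimal pair, reproduces the eight co-state equations of Theorem~\ref{Theorem8}.

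The third step is the characterization of the controls. On the interior of $U_{ad}$ we impose $\partial\mathcal{H}/\partial u_j=0$. For $u_1$ this reads $w_1u_1-\lambda_1S_h+\lambda_5S_h=0$, and similarly for $u_2$ and $u_3$. Solving each equation for $u_j$ gives the unconstrained minimizer. Because $\mathcal{H}$ is strictly convex in each $u_j$ (the coefficient $w_j/2>0$), the constrained minimizer over $[0,0.9]$ is this value projected onto the interval, which yields the $\max\{\min\{\cdot,0.9\},0\}$ form in \eqref{bbie3}.

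The main obstacle is getting the signs right. The integration-by-parts identity for the AB operator transforms the left Caputo-type derivative into the right one, so the sign of the resulting adjoint equation and of the boundary term must be fixed carefully against the convention of \cite{Bahaa19}. The same care is needed for the signs of the $\lambda$-combinations in the switching functions: with the stated Hamiltonian, $u_1$ should involve $(\lambda_1-\lambda_5)S_h$, so I would first verify the paper's convention, in which $\lambda_1+\lambda_5$ appears, before committing to the final form. The partial derivatives themselves are lengthy but routine.
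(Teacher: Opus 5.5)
Your approach is the paper's approach. Its proof only lists the stationarity conditions \eqref{eq36}--\eqref{eq38} with $\lambda_i(t_f)=0$ and asserts that they give the theorem; your variational sketch and componentwise differentiation are a more explicit version of that.

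\textbf{Gap 1: the computation does not give the stated formulas.} You conclude that it ``reproduces the eight co-state equations'' and yields \eqref{bbie3}. Carried out, it does neither, because the statement has sign errors relative to its own Hamiltonian \eqref{eq35}.

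The $\lambda_2$-equation is wrong. With your (correct) $\partial_{E_h}\Lambda=-(\beta_1^\alpha I_r+\beta_2^\alpha I_h)S_h/N_h^2$, the term $-\lambda_1\Lambda$ of $\mathcal{H}$ contributes $+\lambda_1(\beta_1^\alpha I_r+\beta_2^\alpha I_h)S_h/N_h^2$ to $\partial\mathcal{H}/\partial E_h$. The stated $\lambda_2$-equation carries $-\lambda_1(\cdots)$ instead. Compare the $\lambda_4$- and $\lambda_5$-equations: there the same derivative $\partial_{Q_h}\Lambda=\partial_{R_h}\Lambda=\partial_{E_h}\Lambda$ correctly enters as $+\lambda_1(\cdots)-\lambda_2(\cdots)$. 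The other seven equations do check out.

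The control formulas are wrong, and the discrepancy you noticed for $u_1$ affects all three. From \eqref{eq35}, $\partial\mathcal{H}/\partial u_1=w_1u_1-(\lambda_1-\lambda_5)S_h$, $\partial\mathcal{H}/\partial u_2=w_2u_2-(\lambda_3-\lambda_5)I_h$ and $\partial\mathcal{H}/\partial u_3=w_3u_3-(\lambda_3-\lambda_4)I_h$. Your projection argument is valid, since $\mathcal{H}$ is separable and strictly convex in the $u_j$. It gives $\max\{\min\{\cdot,0.9\},0\}$ applied to $(\lambda_1-\lambda_5)S_h/w_1$, $(\lambda_3-\lambda_5)I_h/w_2$ and $(\lambda_3-\lambda_4)I_h/w_3$. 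No convention rescues the printed sums: writing $\mathcal{H}=\mathscr{L}-\sum_i\lambda_i f_i$ only negates each difference, and it changes every co-state equation as well.

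So do not defer to ``the paper's convention''. What your argument proves is a corrected theorem, with $+\lambda_1$ in the $\lambda_2$-equation and differences in \eqref{bbie3}. The paper's proof never performs the differentiation, which is why it does not expose this.

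\textbf{Gap 2: the transversality step.} It does not work as described for the Mittag--Leffler kernel. For $\delta x(0)=0$, exchanging the order of integration and integrating by parts gives
\[
\int_0^{t_f}\lambda\,{}_{0}^{ABC}D_t^{\alpha}\delta x\,dt=\int_0^{t_f}\delta x(t)\Bigl[{}_{t}^{ABC}D_{t_f}^{\alpha}\lambda(t)+\tfrac{B(\alpha)}{1-\alpha}\,\lambda(t_f)\,E_\alpha\bigl(-\tfrac{\alpha}{1-\alpha}(t_f-t)^{\alpha}\bigr)\Bigr]dt .
\]
There is no point term in $\delta x(t_f)$. The leftover piece is an integral against $\delta x$, so it cannot be made to vanish independently of the interior term.

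Stationarity therefore yields only the combined equation: the bracket (a right Riemann--Liouville-type AB derivative of $\lambda$) equals $\partial_x\mathcal{H}$. Letting $t\to t_f$ there shows that $\lambda(t_f)=0$ is impossible. The left side would tend to $0$, while the $E_h$- and $I_h$-components of $\partial_x\mathcal{H}$ would tend to $\partial\mathscr{L}/\partial E_h=\partial\mathscr{L}/\partial I_h=1$.

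When $\lambda(t_f)=0$ the two forms of the adjoint coincide. The same limit then shows that the stated ABC-form $\lambda_2$- and $\lambda_3$-equations have no $H^1$ solution with $\lambda_i(t_f)=0$.

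This part cannot be obtained the way you sketch. At most you can import the ABC-form adjoint with $\lambda_i(t_f)=0$ as a convention from the AB optimal-control literature, which is all the paper does. You should say explicitly that it is postulated rather than derived, and that it conflicts with the first-order conditions.
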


\begin{proof}
The following necessary conditions clearly give the optimality system
consisting of the optimal solutions:
\begin{equation}
\left. 
\begin{array}{c}
_{0}^{ABC}D_{t}^{\alpha }S_{h}=\frac{\partial \mathcal{H}}{\partial \lambda
_{1}},\text{ \ }_{0}^{ABC}D_{t}^{\alpha }E_{h}=\frac{\partial \mathcal{H}}{
\partial \lambda _{2}}, \\ 
_{0}^{ABC}D_{t}^{\alpha }I_{h}=\frac{\partial \mathcal{H}}{\partial \lambda
_{3}},\text{ \ }_{0}^{ABC}D_{t}^{\alpha }Q_{h}=\frac{\partial \mathcal{H}}{
\partial \lambda _{4}}, \\ 
_{0}^{ABC}D_{t}^{\alpha }R_{h}=\frac{\partial \mathcal{H}}{\partial \lambda
_{5}},\text{ \ }_{0}^{ABC}D_{t}^{\alpha }S_{r}=\frac{\partial \mathcal{H}}{
\partial \lambda _{6}}, \\ 
_{0}^{ABC}D_{t}^{\alpha }E_{r}=\frac{\partial \mathcal{H}}{\partial \lambda
_{7}},\text{ \ }_{0}^{ABC}D_{t}^{\alpha }I_{r}=\frac{\partial \mathcal{H}}{
\partial \lambda_{8}}.
\end{array}
\right\}  
\label{eq36}
\end{equation}
\begin{equation}
\left. 
\begin{array}{c}
_{t}^{ABC}D_{t_{f}}^{\alpha }\lambda _{1}=\frac{\partial \mathcal{H}}{
\partial S_{h}},\text{ \ }_{t}^{ABC}D_{t_{f}}^{\alpha }\lambda _{2}
=\frac{\partial \mathcal{H}}{\partial E_{h}}, \\ 
_{t}^{ABC}D_{t_{f}}^{\alpha }\lambda _{3}=\frac{\partial \mathcal{H}}{
\partial I_{h}},\text{ \ }_{t}^{ABC}D_{t_{f}}^{\alpha }\lambda _{4}
=\frac{\partial \mathcal{H}}{\partial Q_{h}}, \\ 
_{t}^{ABC}D_{t_{f}}^{\alpha }\lambda _{5}=\frac{\partial \mathcal{H}}{
\partial R_{h}},\text{ \ }_{t}^{ABC}D_{t_{f}}^{\alpha }\lambda _{6}
=\frac{\partial \mathcal{H}}{\partial S_{r}}, \\ 
_{t}^{ABC}D_{t_{f}}^{\alpha }\lambda _{7}=\frac{\partial \mathcal{H}}{
\partial E_{r}},\text{ \ }_{t}^{ABC}D_{t_{f}}^{\alpha }\lambda _{8}
=\frac{\partial \mathcal{H}}{\partial I_{r}}.
\end{array}
\right\}  
\label{eq37}
\end{equation}
\begin{equation}
\frac{\partial \mathcal{H}}{\partial u_{1}}=0,\text{ }\frac{\partial 
\mathcal{H}}{\partial u_{2}}=0,\text{ }\frac{\partial \mathcal{H}}{\partial
u_{3}}=0.  
\label{eq38}
\end{equation}
Here, Equation $\left( \ref{eq36}\right) $ is the state system, Equation 
\eqref{eq37} is the co-state system corresponding to the result given
in the Theorem \ref{Theorem8}, while Equation $\left( \ref{eq38}\right) $
allow us to derive the controls \eqref{bbie3}.
Also, the~initial conditions are as in Equation $\left( \ref{eq12}\right)$ 
and transversality conditions are given as
\begin{equation*}
\lambda _{i}\left( t_{f}\right) =0,\text{ } i=1,\ldots,8.
\end{equation*}
The proof is complete.
\end{proof}

Next, a~numerical method will be applied to solve the optimality system
and graphical results will be discussed in~detail.


\section{Numerical Results and~Discussion}
\label{sec:7}

In this section, strategies are detailed and numerically simulated to
demonstrate the effects of optimal controls on the fractional monkeypox
model. Adams type predictor--corrector algorithm, adjusted for AB derivatives,
is used to achieve the numerical \mbox{results~\cite{Diethelm,BaleanuJa}.} 
{Let us denote the state vector with}
$\textbf{X}=(S_h,E_h,I_h,Q_h,R_h,S_r,E_r,I_r)$, the~control vector 
with $\textbf{U}=(u_1,u_2,u_3)$, and the costate vector with
$\mathbf{\Lambda}=(\lambda_1,\lambda_2,\lambda_3,
\lambda_4,\lambda_5,\lambda_6,\lambda_7,\lambda_8)$.
Also, the~time interval $[0,t_f]$ is discretized for final 
time $t_f = 36$ months and fixed step size $h = 0.1$. Thus, 
the~discrete state equation on the node $n$, 
where $0\leq n \leq N = t_f/h$, is given by
\begin{equation}
\left.
{\begin{array}{c}
\textbf{X}\left( t_{n+1}\right) =\textbf{X}\left( 0\right) +\frac{h^{\alpha }}{\Gamma \left(
\alpha +2\right) }\left[ \mathbf{G}\left(t_{n+1},\textbf{X}^{p}\left( t_{n+1}\right),
\textbf{U}\left( t_{n+1}\right)\right) \right.\\
+\left. \underset{j=0}{\overset{n}{\sum }}a_{j,n+1}\text{ }\mathbf{G}\left(
t_{j},\textbf{X}\left( t_{j}\right),\textbf{U}\left( t_{j}\right) \right) \right], 
\\
\textbf{X}^{p}\left( t_{n+1}\right) =\textbf{X}\left( 0\right) +\frac{h^{\alpha }}{\Gamma \left(
\alpha +1\right) }\left[\underset{j=0}{\overset{n}{\sum }}b_{j,n+1}\text{ }\mathbf{G}\left(
t_{j},\textbf{X}\left( t_{j}\right),\textbf{U}\left( t_{j}\right) \right) \right],
\end{array}}
\label{bbie1}
\right\}
\end{equation}
{in which $\mathbf{G} = \left(G_1,G_2,G_3,G_4,G_5,G_6,G_7,G_8\right)$ is given 
via Equation$\left(\ref{eq18}\right)$. Then, 
using the forward--backward sweep algorithm, 
the~discrete costate equation}
{\begin{equation*}
_{t}^{ABC}D_{t_f}^{\alpha }\mathbf{\Lambda} \left( t\right) =_{0}^{ABC}D_{t}^{\alpha
}\mathbf{\Lambda} \left( t_f-t\right)
\end{equation*}}
is given as 
\vspace{-3pt}
\begin{equation}
\left.
\begin{array}{c}
\mathbf{\Lambda} \left( t_{N-n-1}\right) =\frac{h^{\alpha }}{\Gamma \left(
\alpha +2\right) }\left[ \frac{\partial
\mathcal{H}}{\partial \textbf{X}}\left( t_{N-n-1},\textbf{X}
\left( t_{N-n-1}\right),\textbf{U}\left( t_{N-n-1}\right)
,\mathbf{\Lambda}^{p}\left( t_{N-n-1}\right) \right) \right. \\
+\left. \underset{j=0}{\overset{n}{\sum }}a_{j,n+1}\frac{\partial 
\mathcal{H}}{\partial X}\left( t_{N-j},\textbf{X}\left( t_{N-j}\right),
\textbf{U}\left( t_{N-j}\right) ,\mathbf{\Lambda}\left(
t_{N-j}\right) \right) \right], 
\\
\mathbf{\Lambda}^{p} \left( t_{N-n-1}\right) =\frac{h^{\alpha }}{\Gamma \left(
\alpha +1\right) }\left[ \underset{j=0}{\overset{n}{\sum }}b_{j,n+1}
\frac{\partial \mathcal{H}}{\partial 
\textbf{X}}\left( t_{N-j},\textbf{X}
\left( t_{N-j}\right),\textbf{U}\left( t_{N-j}\right) ,\mathbf{\Lambda}\left(
t_{N-j}\right) \right) \right].
\end{array}
\right\}
\label{bbie2}
\end{equation}

{The coefficients are as follows:}
\begin{eqnarray*}
a_{j,n+1} &=&\left\{ 
\begin{array}{l}
n^{\alpha +1}-\left( n-\alpha \right) \left( n+1\right) ^{\alpha },\text{ \
\ \ \ \ \ \ \ \ \ \ \ \ \ \ \ \ \ \ \ \ \ \ \ \ \ \ \ \ \ \ \ \ \ \ if \ }j=0
\\ 
\left( n-j+2\right) ^{\alpha +1}-\left( n-j\right) ^{\alpha +1}-2\left(
n-j+1\right) ^{\alpha +1}\text{\ \ if }1\leq \text{\ }j\leq n,
\end{array}
\right.\\
b_{j,n+1} &=&\left( n+1-j\right) ^{\alpha }-\left( n-j\right) ^{\alpha }.
\end{eqnarray*}

All numerical calculations were performed with MATLAB 2021b. In~these numerical
calculations, the~parameters are considered as $\theta _{h}=0.029,$ $\beta
_{1}=0.00025,$ $\beta _{2}=9;$ $\alpha _{1}=0.3,$ $\alpha _{2}=2,$ $\varphi
=2,$ $\tau =0.52,$ $\gamma =(1/21),$ $\mu _{h}=0.02,$ $\delta _{h}=0.2$, 
$\theta _{r}=0.2,$ $\beta _{3}=6,$ $\mu _{r}=1.5,$ $\alpha _{3}=0.2,$ $\delta
_{r}=0.5$ with initial conditions $S_{h}(0)=0.8,$ $E_{h}(0)=0.1$, 
$I_{h}(0)=0.1,$ $Q_{h}(0)=0,$ $R_{h}(0)=0,$ $S_{r}(0)=0.8,$ $E_{r}(0)=0.15$, 
$I_{r}(0)=0.05$. The~numerical scheme utilized is outlined in the following Algorithm~\ref{algorithm1}:

\vspace{12pt}
\begin{algorithm}[H]
\caption{ }\label{algorithm1}
Initiate state and costate vectors. \\
Set the initial guess for the control vector. \\
Calculate predictor and corrector of state vector by Equation $\left(\ref{bbie1}\right)$.\\
Calculate predictor and corrector of costate vector by Equation $\left(\ref{bbie2}\right)$.\\
Update control vector by Equation $\left(\ref{bbie3}\right)$.\\
If the convergence criteria are not reached go to step 3. \\
Optimal state and control vectors are achieved. 
\end{algorithm}
\vspace{12pt}

Our main purpose in these simulations is to examine the effects of
vaccination, treatment, and~quarantine controls on the dynamics against the
spread of the monkeypox virus. Strategy 1 corresponds to the comparison of the
single control effect and the uncontrolled situation. Strategy 2 reveals the
effects of double control on the system. Strategy 3 includes a comparison of
triple control and optimal double control~strategies.

\subsection{Strategy~1}

In this strategy, the~behavior of the single controls on the system is
presented in Figure~\ref{Fig1} with the following cases:

\begin{itemize}
\item Strategy 1.1 applying only vaccination control to susceptible humans,
that is, $u_{1}\neq 0$, $u_{2}=0$, and~$u_{3}=0$;

\item Strategy 1.2 applying only treatment control to infected humans,
that is, $u_{1}=0$, $u_{2}\neq 0$, and~$u_{3}=0$;

\item Strategy 1.3 applying only quarantine control to infected humans,
that is, $u_{1}=0$, $u_{2}=0$, and~$u_{3}\neq 0$.
\end{itemize}

As seen in Figure~\ref{Fig1}, it is clear that the effect of
each control separately is quite effective when compared to the uncontrolled
system. Although~only-treatment and only-quarantine strategies seem to
produce better results for infected people, the~effect of vaccination is
actually too good to be ignored. Because~vaccination control aims to
establish permanent immunity in the population, its effect cannot be
expected to be seen immediately in the infected compartment. In~this
context, while vaccination is effective in the long-term in the infected
compartment, it increases the recovered class by reducing the susceptible and
exposed compartments in a short~time.

\begin{figure}[ht]
\includegraphics[width=.9\linewidth]{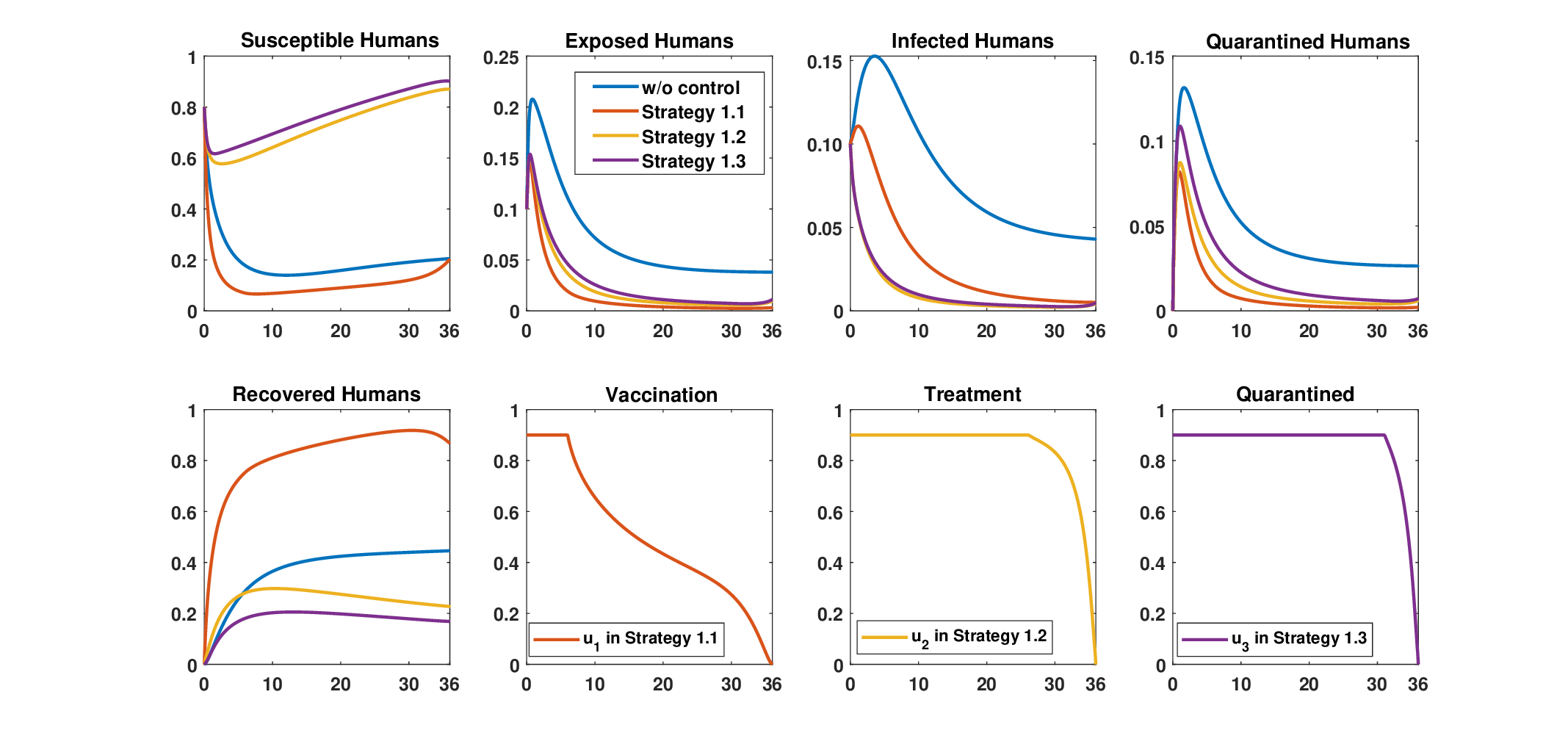} \\[0pt]
\caption{A comparison for Strategy 1: Single control versus uncontrolled
effects for $\alpha=0.90$.}
\label{Fig1}
\end{figure}

\subsection{Strategy~2}

In this strategy, the~results of the double choices among the three controls
to the system and their comparison with the uncontrolled system are
presented in Figure~\ref{Fig2} with the following cases:

\begin{itemize}
\item Strategy 2.1 -- vaccination control $\left( u_{1}\right) $ of susceptible
humans, treatment control $\left( u_{2}\right) $ of infected humans $\left(
u_{1}\neq 0,u_{2}\neq 0,u_{3}=0\right)$;

\item Strategy 2.2 -- vaccination control $\left( u_{1}\right) $ of susceptible
humans, quarantine control $\left( u_{3}\right) $ of infected humans $\left(
u_{1}\neq 0,u_{2}=0,u_{3}\neq 0\right)$;

\item Strategy 2.3 -- treatment control $\left( u_{2}\right) $ of infected
humans, quarantine control $\left( u_{3}\right) $ of infected humans $\left(
u_{1}=0,u_{2}\neq 0,u_{3}\neq 0\right)$.
\end{itemize}

\vspace{-6pt}

\begin{figure}[ht]
\includegraphics[width=.9\linewidth]{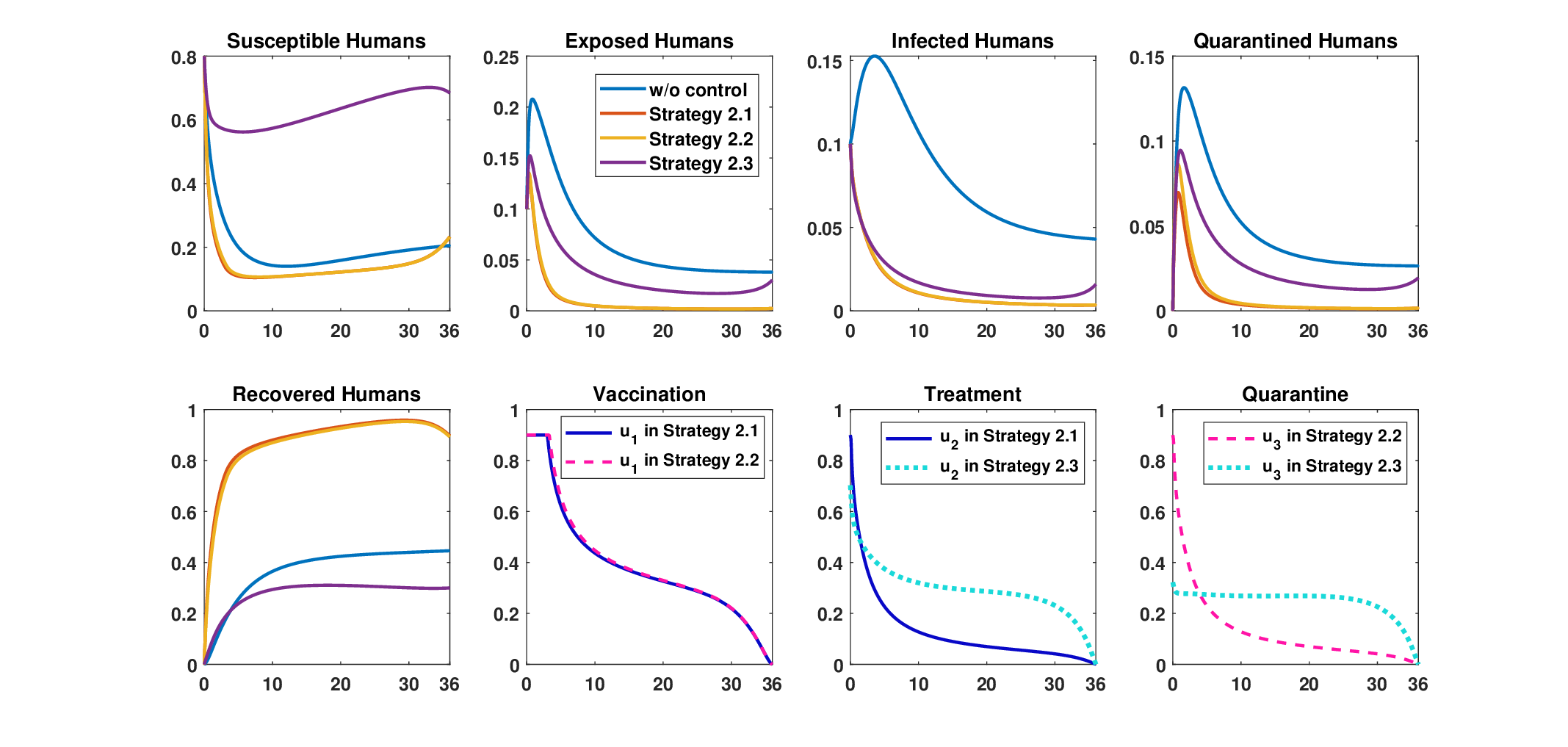} \\[0pt]
\caption{A comparison for Strategy 2: double control versus uncontrolled
effects for $\alpha=0.90$.}
\label{Fig2}
\end{figure}

In Figure~\ref{Fig2}, it is obvious that Strategies 2.1 and 2.2
show similar behavior. Also, these strategies respond better than Strategy
2.3 for each class of population. As~we commented in the previous subsection
Strategy 1, vaccination control greatly improves the results of the strategy
in which it is~incorporated.

\subsection{Strategy~3}

In this strategy, while investigating the effect of triple control on the
system an ideal double control strategy given by Strategy 2 and the
the uncontrolled system is compared. As~seen in Figure~\ref{Fig3},
the simultaneous application of the three controls to the population is
undoubtedly the most effective of all possible situations for infected
humans. However, this result will not be the first policy selection in terms
of the socio-economic structure of the countries. As~the control methods
applied against the disease increase, the~costs will increase, posing
problems for some countries. Therefore, it is desired to find an alternative 
ideal method by comparing the graphs with one of the double controls
obtained in Strategy~2. In~the previous subsection strategy 2, it was
concluded that vaccination-quarantine and vaccination-treatment are
effective strategies for the population. The~quarantine application is a
method that has many harms in terms of both state economy and folk
psychology, as~has been experienced with the COVID-19 epidemic in the recent
past. For~this reason, Strategy 2.1 $\left( u_{1}\neq 0,u_{2}\neq
0,u_{3}=0\right) $, which behaves close to Strategy 2.2, is preferred for~comparison.

\vspace{-3pt}
\begin{figure}[ht]
\includegraphics[width=.9\linewidth]{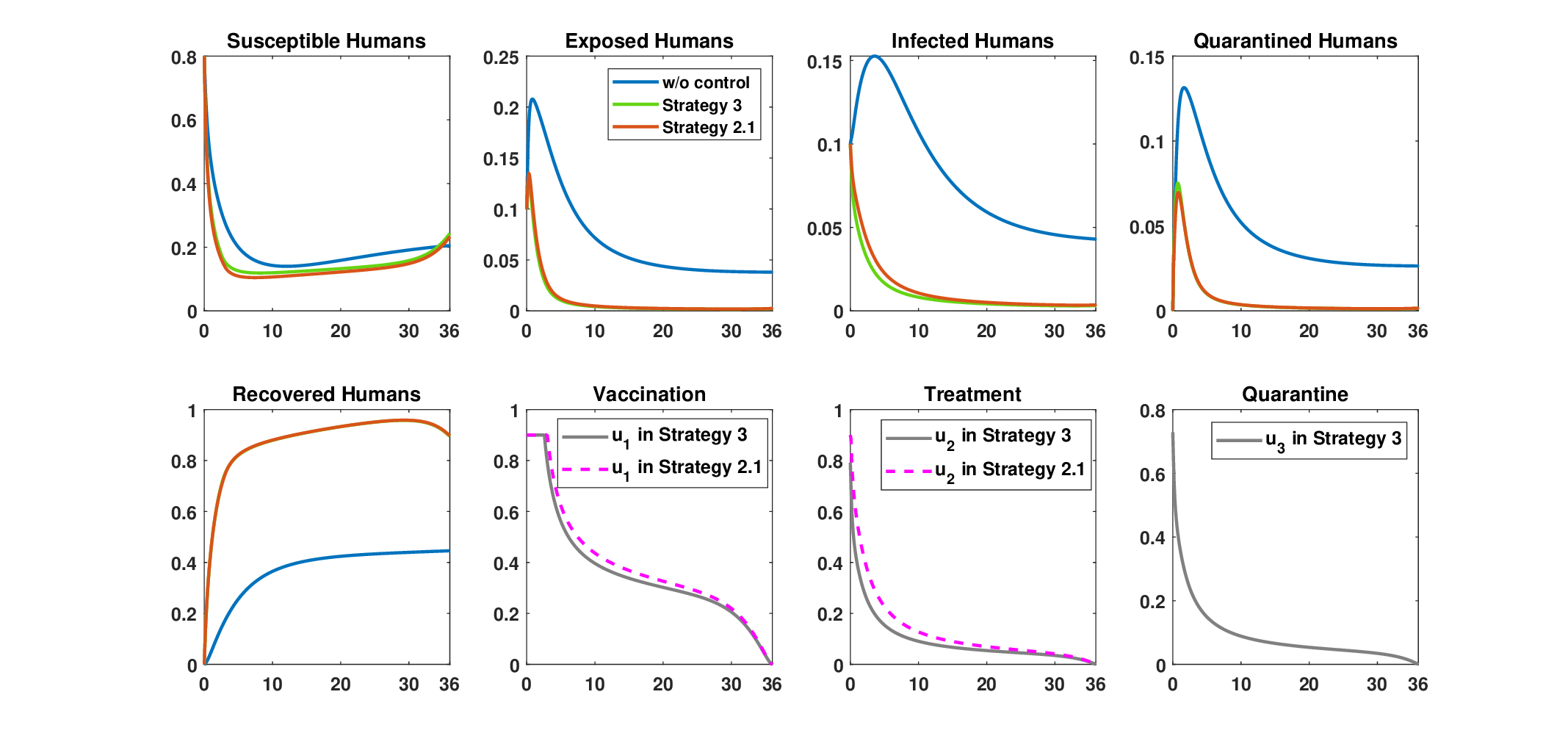} \\[0pt]
\caption{{A} comparison for Strategy 3: Triple control strategy versus the
best double control strategy for $\alpha=0.90$.}
\label{Fig3}
\end{figure}
\unskip


\section{Conclusions}
\label{sec:8}

In recent years, the~rapid spread of the monkeypox virus, especially in
African countries, has increased the efforts to prevent the devastating
effects of the disease. Despite its importance, mathematical modeling and
optimal control studies on the disease are still limited. Motivated by this
need, in~the present study, a~monkeypox model describing the interspecific
spreading of the virus has been discussed in the sense of the
Atangana--Baleanu fractional derivative. In~addition to revealing the
mathematical model of a disease, it is also very important to determine
optimal strategies with parameters that reduce its destructive effect.
Hence, the~main motivation of the present work has been to investigate the
effect of quarantine, treatment, and~vaccination controls on the model. In~
this context, existence-uniqueness results and stability analysis of the
controlled model have been researched. {It has been observed that time-dependent 
quarantine and treatment controls have a reducing effect on the basic 
reproduction number.} The optimality system has been
revealed by Hamiltonian formalism. To~obtain numerical results, the~
Adams-type predictor--corrector method has been implemented. Single, double,
and triple control effects on the model have been illustrated with graphics.
According to the results, it has been seen that dual-control strategies
including vaccination are more effective than dual control composed of
quarantine and treatment. Thus, as~in chickenpox, long-term immunity should
be the primary control strategy in the fight against monkeypox disease. On~
the other hand, as~expected, the~triple control application yields the best
results in declining the number of infected and exposed humans. But,~of
course, at~the onset of the disease, other control strategies will continue
to exert the optimum effect, as~the vaccine is not yet available. As~a
continuation of the study, the~effects of different incidence and treatment
rates on the model are considered for examination. Additionally, the~model can
be developed by considering other interactive diseases with~monkeypox.


\subsection*{Author Contributions}

Conceptualization, E.B. and D.A.; 
methodology, M.Y., D.Y. and D.A.; 
software, M.Y., D.Y. and B.B.\.{I}.E.; 
validation, E.B., D.A., B.B.\.{I}.E. and D.F.M.T.; 
formal analysis, M.Y., D.Y. and D.A.; 
investigation, M.Y., D.Y., E.B., B.B.\.{I}.E., D.A. and D.F.M.T.; 
writing---original draft preparation,  M.Y., D.Y., D.A. and D.F.M.T.; 
writing---review and editing, M.Y., D.Y., D.A. and D.F.M.T.; 
visualization, M.Y. and D.Y.; 
supervision, D.A. and D.F.M.T.;
funding acquisition, D.F.M.T.
All authors have read and agreed to the published version of the manuscript.

\subsection*{Funding}

This research was funded by 
\emph{Funda\c{c}\~{a}o para a Ci\^{e}ncia e a Tecnologia} (FCT), 
grant numbers UID/04106/2025 (\url{https://doi.org/10.54499/UID/04106/2025}) 
and UID/PRR/04106/2025 (\url{https://doi.org/10.54499/UID/PRR/04106/2025}),	
and by FCT project \emph{Mathematical Modelling of Multiscale
Control Systems: Applications to Human Diseases} (CoSysM3), 
reference 2022.03091.PTDC (\url{https://doi.org/10.54499/2022.03091.PTDC}).

\subsection*{Data Availability}

Data are contained within the article.

\subsection*{Conflicts of interest}

The authors declare no conflicts of~interest.



\end{document}